\providecommand{\keywords}[1]{\small \quad \quad \textbf{Keywords:} #1}
\tikzstyle{vertex}=[draw,circle,minimum size=18pt,inner sep=0pt]
\newcommand{\CCP}{{CCP}\xspace}
\newcommand{\CCPs}{{CCPs}\xspace}
\newcommand{\MILP}{{MILP}\xspace}
\newcommand{\MILPs}{{MILPs}\xspace}
\newcommand{\LP}{{LP}\xspace}
\newcommand{\T}{\top}
\newcommand{\X}{\mathcal{X}}
\newcommand{\R}{\mathds{R}}
\newcommand{\BnC}{B\&C\xspace}
\newcommand{\stt}{\,:\,}
\DeclareMathOperator{\proj}{Proj}
\newcommand{\solver}[1]{\textsc{#1}\xspace}
\newcommand{\scipversion}{9.1.0}
\newcommand{\scip}{\solver{SCIP}}
\newcommand{\scipv}{\solver{SCIP}~\scipversion\xspace}
\newcommand{\soplexversion}{7.1.0}
\newcommand{\soplexv}{\solver{SoPlex}~\soplexversion\xspace}
\newcommand{\CB}{\mathcal{B}}
\newcommand{\CN}{\mathcal{N}}
\newcommand{\CC}{\mathcal{C}}
\newcommand{\FC}{\mathcal{C}}
\newcommand{\M}{\mathcal{M}}
\newcommand{\CL}{\mathcal{L}}
\newcommand{\CR}{\mathcal{R}}
\newcommand{\CO}{\mathcal{O}}
\newcommand{\I}{\mathcal{I}}
\newcommand{\J}{\mathcal{J}}
\newcommand{\F}{\mathcal{F}}
\newcommand{\MINLP}{{MINLP}\xspace}
\newcommand{\A}{\mathcal{A}}
\newcommand{\CS}{\mathcal{S}}
\newcommand{\ie}{i.e.,\ }
\newcommand{\rev}[1]{{\color{black}{#1}}}
\newcommand{\nbsc}[1]{\mbox{#1}\xspace}
\newcommand{\lsp}{\nbsc{CCLS}}
\newcommand{\rpp}{\nbsc{CCRP}}
\newcommand{\mpp}{\nbsc{CCMPP}}
\newcommand{\tblRF}{\texttt{B\&C+MIX+DI}\xspace}
\newcommand{\tblSRF}{\texttt{B\&C+MIX+sDI}\xspace}
\newcommand{\tblFBS}{\texttt{DB}\xspace}
\newcommand{\tblPBS}{\texttt{DB+OPF}\xspace}
\newcommand{\tblENPF}{\texttt{DB+EOPF}\xspace}
\newcommand{\tblRPP}{\texttt{CCRP}\xspace}
\newcommand{\tblMPP}{\texttt{CCMPP}\xspace}
\newcommand{\tblLSP}{\texttt{CCLS}\xspace}
\newcommand{\tblS}{\texttt{S}\xspace}
\newcommand{\tblT}{\texttt{T}\xspace}
\newcommand{\tblN}{\texttt{N}\xspace}
\newcommand{\tblDR}{\texttt{\%DP}\xspace}
\newcommand{\tblTDR}{\texttt{\texttt{\%NDI}}\xspace}
\newcommand{\tblcutoff}{\texttt{PN}\xspace}
\newcommand{\tbldomred}{\texttt{F}\xspace}
\newcommand{\tblTd}{\texttt{PT}\xspace}
\newcommand{\DP}{\texttt{\#DP}\xspace}
\newcommand{\MDP}{\texttt{\#MDP}\xspace}
\newcommand{\NUM}{\texttt{\#}}
\newcommand{\NDI}{\texttt{\#NDI}\xspace}
\newcommand{\PROB}{\texttt{Probs}}
\newcommand{\tblMIX}{\texttt{B\&C+MIX}\xspace}
\newcommand{\DeltaS}{\texttt{$\Delta$S}\xspace}
\newcommand{\RT}{\texttt{RT}\xspace}
\newcommand{\RN}{\texttt{RN}\xspace}
\theoremstyle{plain}
\newtheorem{Theorem}{Theorem}[section]
\newtheorem{Corollary}[Theorem]{Corollary}
\newtheorem{Lemma}[Theorem]{Lemma}
\newtheorem{Remark}[Theorem]{Remark}
\newtheorem{Example}[Theorem]{Example}
\newtheorem{Proposition}[Theorem]{Proposition}
\crefname{Theorem}{Theorem}{Theorems}
\crefname{Example}{Example}{Examples}
\crefname{Observation}{Observation}{Observations}
\crefname{Remark}{Remark}{Remarks}
\crefname{Proposition}{Proposition}{Propositions}
\crefname{Lemma}{Lemma}{Lemmas}
\crefname{Corollary}{Corollary}{Corollaries}
\crefname{subsection}{Section}{Sections}
\crefname{algorithm}{Algorithm}{Algorithms}
\crefname{figure}{Figure}{Figures}
\crefname{table}{Table}{Tables}
\crefname{section}{Section}{Sections}
\title{Exploiting Overlap Information in Chance-constrained Program with Random Right-hand Side}
\author[a]{Wei Lv\orcidlink{0009-0009-6861-9532}}
\author[b]{Wei-Kun Chen\orcidlink{0000-0003-4147-1346}}
\author[c,d]{Yu-Hong Dai\orcidlink{0000-0002-6932-9512}}
\author[a]{Xiao-Jiao Tong}
\affil[a]{\small School of Mathematics and Computational Science, Xiangtan University, Xiangtan 411105, China\\\textit{lvwei@xtu.edu.cn; dysftxj@hnfnu.edu.cn}}
\affil[b]{\small School of Mathematics and Statistics, Beijing Institute of Technology, Beijing 100081, China\\\textit{chenweikun@bit.edu.cn}}
\affil[c]{\small State Key Laboratory of Mathematical Sciences, Academy of Mathematics and Systems Science, Chinese Academy of Sciences, Beijing 100190, China}
\affil[d]{\small School of Mathematical Sciences, University of Chinese Academy of Sciences, Beijing 100049, China\\\textit{dyh@lsec.cc.ac.cn}}
\date{\small \today}
\begin{document}
	
\maketitle

\begin{abstract}
We consider the chance-constrained program (\CCP) with random right-hand side under a finite discrete distribution.
It is known that the standard mixed integer linear programming (\MILP) reformulation of the \CCP is generally difficult to solve by general-purpose solvers as the branch-and-cut search trees are enormously large, partly due to the weak linear programming relaxation.
In this paper, we identify another reason for this phenomenon: the intersection of the feasible regions of the subproblems in the search tree could be nonempty, leading to a wasteful duplication of effort in exploring the uninteresting overlap in the search tree.
To address the newly identified challenge and enhance the capability of the \MILP-based approach in solving \CCPs, 
we first show that the overlap in the search tree can be completely removed by a family of valid nonlinear if-then constraints, and then propose two practical approaches to tackle the highly nonlinear if-then constraints.
In particular, we use the concept of dominance relations between different scenarios of the random variables, and propose a novel branching, called dominance-based branching, which is able to create a valid partition of the problem with a much smaller overlap than the classic variable branching. 
Moreover, we develop overlap-oriented node pruning and variable fixing techniques, applied at each node of the search tree, to remove more overlaps in the search tree.
Computational results demonstrate the effectiveness of the proposed dominance-based branching \rev{with the} overlap-oriented node pruning and variable fixing techniques in reducing the search tree size and improving the overall solution efficiency.
	\vspace{8pt} \\
	\keywords{Stochastic programming $\cdot$ Integer programming $\cdot$ Chance constraints $\cdot$ Branch-and-cut algorithms $\cdot$ Overlap}
\end{abstract}

\section{Introduction}
\label{sect:introduction}

Consider the chance-constrained program (\CCP) with random right-hand side:
\begin{align}
	\label{ccps}
	\tag{CCP}
	\min \left \{ c^\top x \, : \,  \mathbb{P} \{Tx \geq \xi\} \geq 1-\epsilon,~ x \in \X \right \},
\end{align}
where $\X \subseteq \R^d$ is a polyhedron,
$T$ is an $m \times d$ matrix, $c$ is a $d$-dimensional cost vector, 
$\xi$ is an $m$-dimensional \emph{random} vector, 
and $\epsilon\in(0,1)$ is a confidence parameter chosen by the decision maker, typically near zero.
\eqref{ccps} is a powerful paradigm to model risk-averse decision-making problems, and has been seen in or served as a building block in many industrial applications in areas such as 
appointment scheduling \cite{Gurvich2010},
energy \cite{Wang2012,Qiu2015,Dey2023},
finance \cite{Dentcheva2004}, 
healthcare \cite{Beraldi2004},
facility location \cite{Beraldi2002a,Saxena2010,Chen2024},
supply chain logistic \cite{Lejeune2007,Murr2000,Beraldi2002}, 
and telecommunication \cite{Dentcheva2000}.
We refer to the surveys of  \citet{Prekopa2003} and \citet{Kucukyavuz2022} and the references therein for more applications of \eqref{ccps}.

\eqref{ccps} was introduced by  \citet{Charnes1963}, \citet{Miller1965}, \citet{Prekopa1970}, and \citet{Prekopa1973}, and has been extensively investigated in the literature \cite{Prekopa2003,Kucukyavuz2022}.  
For \eqref{ccps} with discrete random variables, 
\citet{Sen1992} derived a relaxation problem using disjunctive programming techniques.
\citet{Dentcheva2000} used the so-called $(1-\epsilon)$-efficient points \cite{Prekopa1990} to develop various reformulations for \eqref{ccps} with discrete random variables, and derived lower and upper bounds for the optimal value of the problem. 
Using a partial enumeration of the $(1-\epsilon)$-efficient points, \citet{Beraldi2002} proposed a specialized branch-and-bound algorithm for \eqref{ccps} that is based on the relaxation in \cite{Dentcheva2000} and guaranteed to find an optimal solution.
\citet{Lejeune2012a} presented a pattern-based solution method for obtaining a global solution of \eqref{ccps}, which requires the enumeration of the so-called   $(1-\epsilon)$-sufficient and -insufficient points \cite{Lejeune2012b} and solving an equivalent mixed integer linear programming (\MILP) reformulation.
\citet{Cheon2006} considered the case with a finite discrete distribution of the random vector $\xi$, that is, $\xi$ takes values $\xi^1, \xi^2,\ldots, \xi^n $ with $\mathbb{P}\{\xi = \xi^i\}= p_i \geq 0$ for $i\in [n]:=\{1,2, \ldots, n\}$ and $\sum_{i=1}^n p_i = 1$.
\citet{Cheon2006} provided a specialized branch-and-cut (\BnC), called branch-reduce-cut, algorithm that finds a global solution of \eqref{ccps} by successively partitioning the nonconvex feasible region and using bounding techniques.

\rev{In this paper, we are primarily interested in the \MILP reformulation-based approaches \cite{Ruszczynski2002,Luedtke2010a,Vielma2012,Kucukyavuz2012,Abdi2016,Zhao2017,Ahmed2017,Klnc-Karzan2022,Jiang2025}, which allow for the use of sophisticated \MILP solvers such as Gurobi and CPLEX to find an optimal solution for \eqref{ccps}.}
These approaches also rely on the assumption that $\xi$ has a finite discrete distribution $\xi^1,\xi^2, \ldots, \xi^{n}$, and require to solve an \MILP reformulation of \eqref{ccps}, detailed as follows.
Without loss of generality, we assume $\xi^i \geq 0$ for all $i \in [n]$ (by applying the transformation in \cite{Luedtke2010a,Kucukyavuz2022} if needed).
By introducing for each $i \in [n]$, a binary variable $z_i$, where $z_i = 0$ guarantees $v=Tx \geq \xi^i$,
\eqref{ccps} can then be reformulated as the following \MILP formulation \cite{Ruszczynski2002}:
\begin{equation}\label{milp}
	\min\left\{ c^\top x \, : \, \eqref{milp-consx}-\eqref{milp-consbound} \right\},\tag{MILP}
\end{equation}
where
\begin{align}
	\quad &Tx = v,~x \in \X, \label{milp-consx}\\
	\quad &v \geq \xi^{i}(1-z_i), ~\forall~i\in [n],  \label{milp-consvz}\\
	\quad &\sum_{i = 1}^n p_i z_i \leq \epsilon, \label{milp-consknap}\\
	\quad &v \in \R_+^m, ~z \in \{0,1\}^n. \label{milp-consbound}
\end{align}
Constraints \eqref{milp-consvz} are referred to as big-$M$ constraints where the big-$M$ coefficients are given by $\xi^i_k$, $k \in [m]$.
It is worthy noting that the assumption on the finite discrete distribution of $\xi$ is not restrictive, as for arbitrary distributions of $\xi$, we can use sample average approximation or importance sampling technique to obtain an approximation problem that satisfies this assumption; see \cite{Ahmed2008,Luedtke2008,Nemirovski2006a,Barrera2016}. 

It is well-known that due to the presence of the big-$M$ constraints in \eqref{milp-consvz}, the linear programming (\LP) relaxation of formulation \eqref{milp} is generally very weak, making it difficult to use state-of-the-art \MILP solvers to tackle formulation \eqref{milp} directly.
To bypass this difficulty, various approaches have been developed in the literature \cite{Luedtke2010a,Vielma2012,Kucukyavuz2012,Abdi2016,Zhao2017,Ahmed2017,Klnc-Karzan2022,Jiang2025}.
Specifically, 
using the lower bounds of variables $v$ (also known as quantile information \cite{Dentcheva2001,Lejeune2008}), 
\citet{Luedtke2010a} developed a strengthened version of  formulation \eqref{milp} that provides a tighter \LP relaxation and has less constraints. 
By investigating the single mixing set with a knapsack constraint, an important substructure of \eqref{milp} defined by constraints \eqref{milp-consvz} for a fixed $k \in [m]$ and \eqref{milp-consknap}, 
\citet{Luedtke2010a} employed the mixing inequalities \cite{Atamturk2000a,Gunluk2001b} to strengthen the \LP relaxation of formulation \eqref{milp}. 
\citet{Kucukyavuz2012}, \citet{Abdi2016}, and \citet{Zhao2017} further investigated the polyhedral structure of the single mixing set and proposed new families of inequalities to strengthen the \LP relaxation of formulation \eqref{milp}.
\citet{Kucukyavuz2012}, \citet{Zhao2017}, and \citet{Klnc-Karzan2022} investigated the joint mixing set with a knapsack constraint, that considers all constraints in \eqref{milp-consvz} and \eqref{milp-consknap}, and developed various aggregated mixing inequalities.
Computational evidences in \cite{Luedtke2010a,Kucukyavuz2012,Abdi2016,Zhao2017,solverscip2021} have demonstrated the effectiveness of the mentioned inequalities in enhancing the capability of employing \MILP solvers in solving formulation \eqref{milp}.
\citet{Luedtke2010a}, \citet{Vielma2012}, \citet{Kucukyavuz2012}, and \citet{Ahmed2017} developed various extended formulations that provide tighter \LP relaxations than the direct \LP relaxation of formulation \eqref{milp}.
\rev{Recently, \citet{Jiang2025} proposed a variable fixing approach that can reduce the problem size and tighten the \LP relaxation of formulation \eqref{milp}. 
Applying this technique requires computing an upper bound for the optimal value of formulation \eqref{milp} and solving single-scenario problems of the form $\min\{c^\top x \,:\, Tx = v,~x \in \X,~v \geq \xi^{i},~v \in \mathbb{R}_+^m\}$, $i \in [n]$.}

In another line of research, \citet{Ruszczynski2002} used the concept of dominance relations between scenarios and developed a class of valid inequalities, called dominance inequalities.
\rev{In \cite{Cheon2006}, the authors stated}
that the incorporation of dominance inequalities can \rev{reduce the solution time} 
of employing \MILP solvers in solving formulation \eqref{milp} (see \cite{Beraldi2010,Song2013,Song2014,Henrion2022} for similar discussions in other contexts){. However,} it was unknown until now how these inequalities improve the performance of \MILP solvers\rev{; in particular, whether they can improve the \LP relaxation of formulation \eqref{milp} is not addressed in the literature.}
As a byproduct of analysis, this paper closes this research gap by showing that adding the dominance inequalities into formulation \eqref{milp} cannot improve the \LP relaxation but can improve the performance of \MILP solvers by saving the computational efforts spent in exploring the uninteresting part of the search tree; see \cref{subsect:existing work}.
	
The goal of this paper is to explore new integer programming techniques to further improve the computational performance of the \MILP-based approach to solving \eqref{ccps}.
\subsection{Contributions and outline}
\label{subsect:contributions and outline}

Unlike existing approaches \cite{Luedtke2010a,Vielma2012,Kucukyavuz2012,Abdi2016,Zhao2017,Ahmed2017,Klnc-Karzan2022,Jiang2025} that mainly focus on improving the \LP relaxation of formulation \eqref{milp}, we go in a different direction by identifying another drawback of solving formulation \eqref{milp} using \BnC solvers; 
that is, the intersection of the feasible regions of the subproblems (after removing the common fixed variables) in the \BnC search tree could be nonempty,  leading to a wasteful duplication of effort in exploring the uninteresting overlap in the search tree. 
To overcome the drawback and enhance the capability of the \MILP-based approach in solving \eqref{ccps}, 
we first show that the overlap can be completely removed by adding a family of valid nonlinear if-then constraints into formulation \eqref{milp}, and then propose two practically tractable approaches to tackle the highly nonlinear if-then constraints.
More specifically, 
\begin{itemize}
	\item [$\bullet$] By using the concept of dominance relations between different scenarios of the random variables \cite{Ruszczynski2002}, we propose a novel branching, called dominance-based branching, which is able to create a valid partition of the current problem with a much smaller overlap than the classic variable branching. 
	We show that applying the proposed dominance-based branching to formulation \eqref{milp} is equivalent to applying the classic variable branching to formulation \eqref{milp} with the dominance inequalities in \cite{Ruszczynski2002} in terms of fixing the same variables and sharing the same \LP relaxation bound at two nodes with the same branching variables.
	We also employ a preprocessing technique to derive more dominance relations between the scenarios of the random variables, thereby leading to a much more effective dominance-based branching.
	\item [$\bullet$] By considering the joint mixing set with a knapsack constraint and the newly proposed if-then constraints, we develop node pruning and variable fixing techniques, applied at each node of the search tree, to remove more overlaps in the search tree. 
	Although implementing the proposed overlap-oriented  node pruning and variable fixing techniques is proved to be strongly NP-hard, we are able to develop an approximation algorithm that is competitive with the exact algorithm in terms of reducing the tree size while still enjoying an efficient polynomial-time worst-case complexity. 
\end{itemize}

We embed the proposed dominance-based branching \rev{with the} overlap-oriented node pruning and variable fixing techniques into the state-of-the-art open source \MILP solver \scip, and apply the resultant approach to solve the chance-constrained versions of the resource planning  (\rpp) problem \cite{Luedtke2014,Gurvich2010}, multiperiod power planning (\mpp) problem \cite{Dey2023}, and lot-sizing (\lsp) problem \cite{Beraldi2002}.
{Extensive computational results show that the two proposed approaches can significantly reduce the \BnC search tree size and substantially enhance the capability of \scip in solving \CCPs.}

This paper is organized as follows.
\cref{subsec:notations} presents the notations and assumptions used in this paper.
\cref{sect:weakness} shows that applying the classic \BnC algorithm to formulation \eqref{milp} leads to overlaps in the search tree, and presents a family of valid nonlinear if-then constraints to remove the overlaps. 
\cref{sect:precedence branch} develops the dominance-based branching and analyzes its relation to the direct use of the dominance inequalities in \cite{Ruszczynski2002}.  
\cref{sect:consistency branch} derives overlap-oriented node pruning and variable fixing techniques and provides the complexity and algorithmic design for the implementation.  
\cref{sect:computational results} reports the computational results. 
Finally, \cref{sect:Conclusion and future work} draws the conclusion.

\subsection{Notations and assumptions}
\label{subsec:notations}

For a nonnegative integer $n$, let $[n]=\{1, 2,\ldots, n\}$ where $[n] = \varnothing$ if $n =0$.
Let $\boldsymbol{0}$ denote the all zeros vector with an appropriate dimension.
For two vectors $v^1, v^2$ of dimension $m$, $v^1 \geq v^2$ denotes that $v^1_k \geq v^2_k$ holds for all $k \in [m]$; and $v^1 \ngeq v^2$ denotes that $v^1_k < v^2_k$ holds for at least one $k \in [m]$.
For vectors $\xi^1, \xi^2, \ldots, \xi^n$ and a subset $\CS \subseteq [n]$, we denote $\xi^{\CS}=\max_{i\in \CS}\{ \xi^i \}$, where the max is taken component-wise and $\xi^{\CS} = \boldsymbol{0}$ if $\CS=\varnothing$. 
We follow \cite{Margot2002,Ostrowski2011,Pfetsch2019} to characterize the current node of the search tree by $(\CB_0, \CB_1)$ or $(\CN_0, \CN_1)$, where $\CB_0$ and $\CB_1$ are the index sets of variables $z$  that have been branched on $0$ and $1$, respectively; and $\CN_0$ and $\CN_1$ are the index sets of variables $z$ that have been fixed to $0$ and $1$, respectively, by variable branching or other methods like reduced cost fixing or the coming overlap-oriented variable fixing.

For simplicity, we assume that variables $x$ in \eqref{ccps} are all continuous variables. 
However, our proposed approach can be applied to the general case, in which some of the variables $x$ may require to be integers, as long as the branching is allowed to be performed on variables $z_i$, $i \in [n]$, at a node.

\section{Overlaps arising in solving formulation \eqref{milp}}
\label{sect:weakness}
In this section, we first illustrate the weakness of applying the \BnC algorithm with the classic variable branching to solve  formulation \eqref{milp}; that is, the intersection of the feasible regions of the subproblems (after removing the common fixed variables) in the \BnC search tree could be nonempty,
 leading to a wasteful duplication of effort in exploring the uninteresting overlap in the search tree.
Then we strengthen formulation \eqref{milp} by presenting a family of valid nonlinear \emph{if-then} constraints to remove the overlaps during the \BnC process.
\subsection{Overlaps in the search tree} 
\label{subsect:overlaps}

We first apply the classic variable branching to formulation \eqref{milp}.
Without loss of generality, we 
suppose that variable $z_j$, $j \in [n]$, is branched on at the root node, and the two branches are
\begin{align}
	O^{\text{L}}=\min\left\{c^\T x \stt \eqref{milp-consx}-\eqref{milp-consbound},~z_j=0 \right\}, \label{trad_leftbranch}\\
	O^{\text{R}}=\min\left\{c^\T x \stt \eqref{milp-consx}-\eqref{milp-consbound},~z_j=1 \right\}.\label{trad_rightbranch}
\end{align}
By removing the common fixed variable $z_j$ from problems \eqref{trad_leftbranch} and \eqref{trad_rightbranch}, we obtain
\begin{align}
	\label{T-leftbranch}
	& O^{\text{L}}=\min\left\{ c^\T x \stt (x,v,z) \in \CC,~v \geq \xi^{j},
	~\sum_{i \in [n]\backslash\{j\}} p_i z_i \leq \epsilon \right\},\\
	\label{T-rightbranch}
	& O^{\text{R}} = \min\left\{c^\T x \stt (x,v,z) \in \CC, ~v \geq \boldsymbol{0},
	~p_j+\sum_{i \in [n]\backslash\{j\}} p_i z_i \leq \epsilon \right\},
\end{align}
where 
\begin{equation*}
	\CC = \left\{(x,v,z) \in \X\times\R_{+}^{m}\times\{0,1\}^{n-1} \stt Tx = v, ~v \geq \xi^{i}(1-z_i), 
	~\forall~i \in [n]\backslash\{j\} \right\}
\end{equation*}
represents the feasible set of formulation \eqref{milp} with constraints $v \geq {\xi^j}(1-z_j)$ and $\sum_{i = 1}^n p_i z_i \leq \epsilon$ dropped and variable $z_j$ removed.
As $\xi^j \geq 0$ and $p_j \geq 0$, the intersection of the feasible sets of the two subproblems \eqref{T-leftbranch} and \eqref{T-rightbranch}, called \emph{overlap}, is  
\begin{equation}
	\label{Overlap}
	\CO=\left \{(x,v,z) \in\CC \stt v \geq \xi^{j},
	~p_{j}+\sum_{i \in [n]\backslash\{{j}\}} p_i z_i \leq \epsilon \right\}.
\end{equation}
The {overlap} $\CO$ between the left and right branches is, however, generally nonempty.
As the \BnC algorithm explores this overlap in both branches, and the restrictions of both branches to this overlap provide the same objective value, there exists some redundancy in the search tree.
In particular, considerable efforts are likely to be spent in exploring many subsequent nodes of one branch, whose feasible sets are subsets of the overlap $\CO$,   but can be avoided as these nodes cannot provide a better solution than that of the other branch.
We use the following example to illustrate this weakness.
\begin{Example}
	\label{example1}
	Consider
	\begin{equation}
		\label{expro1}
		\min \left\{6x_1 + x_2 + 3x_3 \stt \mathbb{P} \{x \geq \xi\} \geq 1- \epsilon,~x \in \R_+^3 \right\},
	\end{equation}
	where $\epsilon = \frac{4}{7}$ and $\xi$ is a random vector with $7$ equi-probable scenarios:
	\begin{equation*}
		\xi^{1} = \left(\begin{array}{c}  2 \\ 1 \\ 12 \\ \end{array}\right),\,
		\xi^{2} = \left(\begin{array}{c}  3 \\ 1 \\ 10 \\ \end{array}\right),\,
		\xi^{3} = \left(\begin{array}{c}  4 \\ 2 \\ 7 \\ \end{array}\right),\,
	\end{equation*}
	\begin{equation*}
		\xi^{4} = \left(\begin{array}{c}  5 \\ 2 \\ 6 \\	\end{array}\right),\,
		\xi^{5} = \left(\begin{array}{c}  6 \\ 2 \\ 6 \\ \end{array}\right),\,
		\xi^{6} = \left(\begin{array}{c}  7 \\ 1 \\  4 \\ \end{array}\right),\,
		\xi^{7} = \left(\begin{array}{c}  12 \\ 1 \\ 2 \\ \end{array}\right).
	\end{equation*}
	In formulation \eqref{milp} of this example, we have $x_1 = v_1$, $x_2 = v_2$, and $x_3 = v_3$.
	As a result, formulation \eqref{milp} reduces to
	\begin{equation}
		\small
		\label{expro}
		\!\min \left\{6v_1 + v_2 + 3v_3 \stt v \geq \xi^i (1-z_i),~\forall~i \in [7],~\frac{1}{7}\sum_{i =1}^7 z_i \leq \frac{4}{7},~v \in \R_+^3,~z \in \{0,1\}^7 \right\}.
	\end{equation}
	The optimal value of problem \eqref{expro} is $59$.
	We apply the {\rm \BnC} algorithm to solve problem \eqref{expro}.
	We assume that a feasible solution of objective value $59$ is found at the root node {\rm(}e.g., by some heuristic algorithm{\rm)}. 
	For simplicity of illustration, we use the {most infeasible branching} rule {\rm \cite{Achterberg2007}} to choose the variable to branch on.
	The {\rm \BnC} search tree is drawn in \cref{figure1}.
	At each node, we report the optimal value $z_{\rm\LP}$ of its {\LP} relaxation.
	
	The feasible regions of nodes $2$ and $3$ {\rm(}after removing the common fixed variable $z_4${\rm)} are 
	\begin{equation*}
		\begin{aligned}
			& \F^2 = \left\{(v,z)\in\R_+^3\times\{0,1\}^6 \stt  ~\frac{1}{7}(z_1+z_2+z_3+z_5+z_6 +z_7) \leq \frac{4}{7},\right.\\
			& \qquad \qquad\qquad \qquad\quad \left.~v\geq \xi^4 = (5,\,2,\,6)^\T,~v\geq\xi^i(1-z_i),~\forall~i\in\{1,2,3,5,6,7\} \vphantom{\frac{1}{7}}\right\},\\
			& \F^3 = \left\{(v,z)\in\R_+^3\times\{0,1\}^6 \stt  ~\frac{1}{7}(z_1+z_2+z_3+z_5+z_6 +z_7) \leq \frac{3}{7},\right.\\
			& \qquad \qquad\qquad \qquad\qquad \qquad\qquad\quad\left.~v\geq \boldsymbol{0},~v\geq\xi^i(1-z_i),~\forall~i\in\{1,2,3,5,6,7\} \vphantom{\frac{1}{7}} \right\},\\
		\end{aligned}
	\end{equation*}
	and the overlap is 
	\begin{equation*}
		\begin{aligned}
			& \CO= \F^2 \cap \F^3 = \left\{(v,z)\in\R_+^3\times\{0,1\}^6 \stt \frac{1}{7}(z_1+z_2+z_3+z_5+z_6+z_7) \leq \frac{3}{7},\right.\\
			& \left.\qquad \qquad\qquad \qquad\qquad v\geq \xi^4= (5,\,2,\,6)^\T,\,v\geq\xi^i(1-z_i),~\forall~i\in\{1,2,3,5,6,7\}\vphantom{\frac{1}{7}}\right\}.
		\end{aligned}
	\end{equation*}
	Consider node $6$,  a subsequent node of node $3$, whose feasible region {\rm(}after removing the fixed variable $z_4${\rm)} is 
	\begin{equation*}
		\begin{aligned}
			& \F^6 = \left\{(v,z)\in\R_+^3\times\{0,1\}^6 \stt  z_5 = 0,~\frac{1}{7}(z_1+z_2+z_3+z_5+z_6+z_7) \leq \frac{3}{7},\right.\\
			& \qquad \qquad\qquad\qquad\qquad \left. v\geq \xi^5=(6,\,2,\,6)^\T,\,v\geq\xi^i(1-z_i),~\forall~i\in\{1,2,3,5,6,7\} \vphantom{\frac{1}{7}}\right\}.
		\end{aligned}
	\end{equation*}
	Clearly, $\F^6 \subseteq \CO \subseteq \F^2$, implying that the optimal value of node $6$ cannot be better than that of node $2$.
	Thus, node $6$ {\rm{(}}and its descendant nodes $12$, $13$, $20$, $21$, $30$, and $31${\rm{)}} can be pruned.
	
	This example demonstrates the weakness of applying the {\rm \BnC} algorithm with the classic variable branching to solve formulation \eqref{milp}: a wasteful duplication of  efforts is spent in exploring the uninteresting overlap in the search tree.
	
	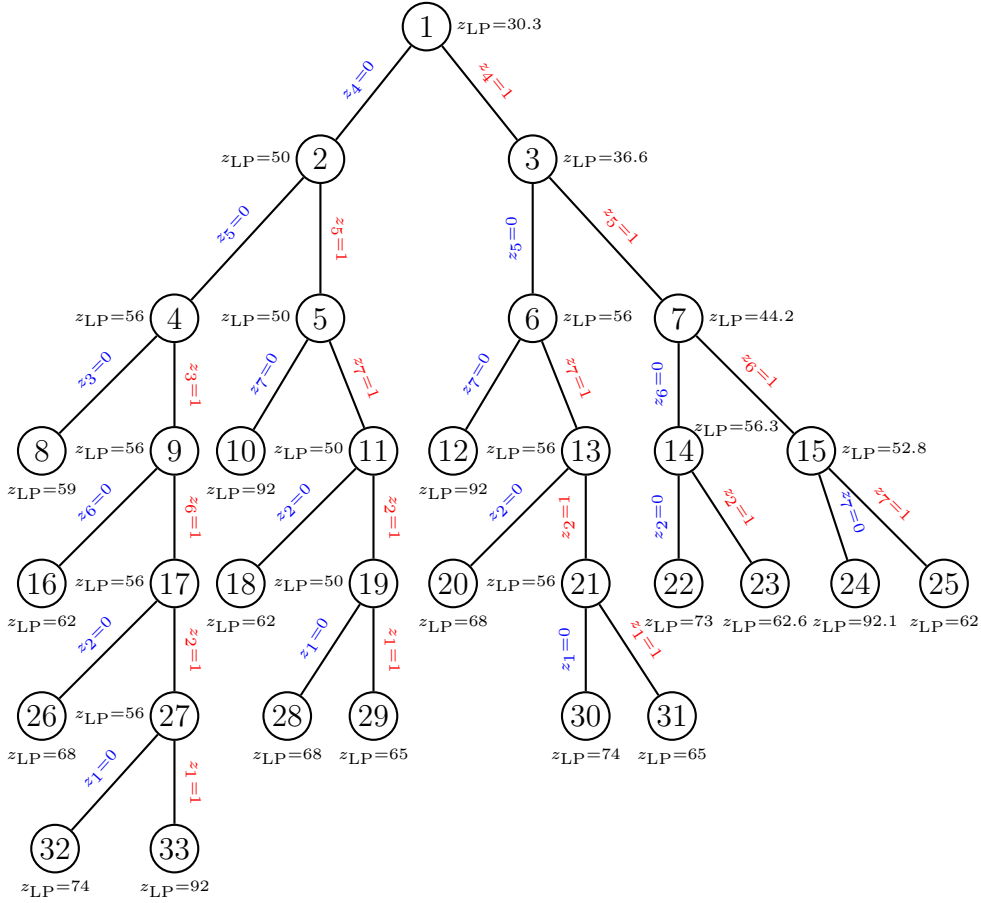
\begin{figure}[!h]
	\everymath{\scriptscriptstyle}
	\centering
	\begin{tikzpicture}[=mid,scale=0.5,thick]
		
		\node[vertex] (1) at (380bp,1000bp) {1};
		\node[vertex] (2) at (300bp,900bp) {2};
		\node[vertex] (3) at (460bp,900bp) {3};
		\node[vertex] (4) at (190bp,780bp) {4};
		\node[vertex] (5) at (300bp,780bp) {5};
		\node[vertex] (6) at (459.99bp,780bp) {6};
		\node[vertex] (7) at (570bp,780bp) {7};
		\node[vertex] (8) at (90bp, 680bp) {8};
		\node[vertex] (9) at (190bp, 680bp) {9};
		\node[vertex] (10) at (240bp,  680bp) {10};
		\node[vertex] (11) at (340bp,  680bp) {11};
		\node[vertex] (12) at (400bp, 680bp) {12};
		\node[vertex] (13) at (500bp,680bp) {13};
		\node[vertex] (14) at (569.999bp, 680bp) {14};
		\node[vertex] (15) at (670bp, 680bp) {15};
		\node[vertex] (16) at (90bp, 580bp) {16};
		\node[vertex] (17) at (190bp,580bp) {17};
		\node[vertex] (18) at (240bp,580bp) {18};
		\node[vertex] (19) at (340bp,580bp) {19};
		\node[vertex] (20) at (400bp,580bp) {20};
		\node[vertex] (21) at (499.999bp,580bp) {21};
		\node[vertex] (22) at (569.998bp,580bp) {22};
		\node[vertex] (23) at (635bp,580bp) {23};
		\node[vertex] (24) at (703bp,580bp) {24};
		\node[vertex] (25) at (770bp,580bp) {25};
		\node[vertex] (26) at (90bp, 480bp) {26};
		\node[vertex] (27) at (190bp, 480bp) {27};
		\node[vertex] (28) at (275bp,480bp) {28};
		\node[vertex] (29) at (340bp,480bp) {29};
		\node[vertex] (30) at (499.998bp,480bp) {30};
		\node[vertex] (31) at (565bp,480bp) {31};
		\node[vertex] (32) at (100bp, 380bp) {32};
		\node[vertex] (33) at (190bp, 380bp) {33};
		
		\draw (1) -- (2) node [midway,above,sloped,blue] {$z_4=0$};
		\draw (1) -- (3) node [midway,above,sloped,red] {$z_4=1$};
		\draw (2) --(4) node[midway,above,sloped,blue] {$z_5=0$};
		\draw (2) --(5) node[midway,above,sloped,red]  {$z_5=1$};
		\draw (3)--(6) node[midway,above,sloped,blue] {$z_5=0$};
		\draw (3)--(7) node[midway,above,sloped,red] {$z_5=1$};
		\draw (4)--(8) node[midway,above,sloped,blue] {$z_3=0$};
		\draw (4)--(9)  node[midway,above,sloped,red] {$z_3=1$};
		\draw (5)--(10)  node[midway,above,sloped,blue] {$z_7=0$};
		\draw (5)--(11) node[midway,above,sloped,red] {$z_7=1$};
		\draw (6)--(12) node[midway,above,sloped,blue] {$z_7=0$};
		\draw (6)--(13) node[midway,above,sloped,red] {$z_7=1$};
		\draw (7)--(14)  node[midway,above,sloped,blue] {$z_6=0$};
		\draw (7)--(15)  node[midway,above,sloped,red] {$z_6=1$};
		\draw (9)--(16) node[midway,above,sloped,blue] {$z_6=0$};
		\draw (9)--(17) node[midway,above,sloped,red] {$z_6=1$};
		\draw (11)--(18) node[midway,above,sloped,blue] {$z_2=0$};
		\draw (11)--(19) node[midway,above,sloped,red] {$z_2=1$};
		\draw (13)--(20) node[midway,above,sloped,blue] {$z_2=0$};
		\draw (13)--(21)  node[midway,above,sloped,red] {$z_2=1$};
		\draw (14)--(22) node[midway,above,sloped,blue]  {$z_2=0$};
		\draw (14)--(23) node[midway,above,sloped,red] {$z_2=1$};
		\draw (15)--(24) node[midway,above,sloped,blue]  {$z_7=0$};
		\draw (15)--(25) node[midway,above,sloped,red] {$z_7=1$};
		\draw (17)--(26) node[midway,above,sloped,blue] {$z_2=0$};
		\draw (17)--(27) node[midway,above,sloped,red] {$z_2=1$};
		\draw (19)--(28) node[midway,above,sloped,blue] {$z_1=0$};
		\draw (19)--(29)  node[midway,above,sloped,red] {$z_1=1$};
		\draw (21)--(30) node[midway,above,sloped,blue]  {$z_1=0$};
		\draw (21)--(31) node[midway,above,sloped,red] {$z_1=1$};
		\draw (27)--(32) node[midway,above,sloped,blue] {$z_1=0$};
		\draw (27)--(33) node[midway,above,sloped,red] {$z_1=1$};
		\begin{scope}[nodes = {left=7pt}]
			\node at (2) {{$z_{\text{LP}}=50$}};
			\node at (4) {{$z_{\text{LP}}=56$}};
			\node at (5) {{$z_{\text{LP}}=50$}};
			\node at (9) {{$z_{\text{LP}}=56$}};
			\node at (11) {{$z_{\text{LP}}=50$}};
			\node at (13) {{$z_{\text{LP}}=56$}};
			\node at (17) {{$z_{\text{LP}}=56$}};
			\node at (19) {{$z_{\text{LP}}=50$}};
			\node at (21) {{$z_{\text{LP}}=56$}};
			\node at (27) {{$z_{\text{LP}}=56$}};
		\end{scope}
		\begin{scope}[nodes = {right = 7pt, shift={(-0.2cm,0.3cm)}}]
			\node at (14) {{$z_{\text{LP}}=56.3$}};
		\end{scope}
		\begin{scope}[nodes = {right=7pt}]
			\node at (1) {{$z_{\text{LP}}=30.3$}};
			\node at (3) {{$z_{\text{LP}}=36.6$}};
			\node at (6) {{$z_{\text{LP}}=56$}};
			\node at (7) {{$z_{\text{LP}}=44.2$}};
			\node at (15) {{$z_{\text{LP}}=52.8$}};
		\end{scope}
		\begin{scope}[nodes = {below = 8pt}]
			\node at (8) {{$z_{\text{LP}}=59$}};
			\node at (10) {{$z_{\text{LP}}=92$}};
			\node at (12) {{$z_{\text{LP}}=92$}};
			\node at (16) {{$z_{\text{LP}}=62$}};
			\node at (18) {{$z_{\text{LP}}=62$}};
			\node at (20) {{$z_{\text{LP}}=68$}};
			\node at (22) {{$z_{\text{LP}}=73$}};
			\node at (23) {{$z_{\text{LP}}=62.6$}};
			\node at (24) {{$z_{\text{LP}}=92.1$}};
			\node at (25) {{$z_{\text{LP}}=62$}};
			\node at (26) {{$z_{\text{LP}}=68$}};
			\node at (28) {{$z_{\text{LP}}=68$}};
			\node at (29) {{$z_{\text{LP}}=65$}};
			\node at (30) {{$z_{\text{LP}}=74$}};
			\node at (31) {{$z_{\text{LP}}=65$}};
			\node at (32) {{$z_{\text{LP}}=74$}};
			\node at (33) {{$z_{\text{LP}}=92$}};
		\end{scope}
	\end{tikzpicture}
	\caption{The \BnC search tree of the problem in \cref{example1} with the classic variable branching applied.}
	\label{figure1}
\end{figure}
\end{Example}

\subsection{Removing the overlaps}
\label{subsect:removing overlaps}
To remove the overlap $\CO$ in the search tree, let us first divide the right branch \eqref{T-rightbranch} into the following two subproblems:
\begin{equation}\label{rightbranch1}
	O^{\text{R}_1} = \min\left\{c^\T x \stt (x,v,z) \in \CC, ~v \geq \xi^j,
	~p_j+\sum_{i \in [n]\backslash\{j\}} p_i z_i \leq \epsilon \right\},
\end{equation}
and 
\begin{equation}\label{rightbranch-new}
	O^{\text{R}_2} = \min\left\{c^\T x \stt (x,v,z) \in \CC, ~v \ngeq \xi^j, 
	~p_j+\sum_{i \in [n]\backslash\{j\}} p_i z_i \leq \epsilon \right\}.
\end{equation}
Here $v \ngeq \xi^j$ denotes that $v < \xi^j_{k}$ holds for at least one $k \in [m]$.
Observe that the feasible region of subproblem \eqref{rightbranch1} is identical to the overlap $\mathcal{O}$ in \eqref{Overlap} and thus a subset of the feasible region of problem \eqref{T-leftbranch}.
This, together with the fact that the objective functions of problems \eqref{T-leftbranch} and \eqref{rightbranch1} are identical, implies $O^{\text{R}_1} \geq O^{\text{L}}$.
Consequently, instead of exploring problem \eqref{T-rightbranch} with a potentially large feasible region, 
we can explore its restriction \eqref{rightbranch-new} in the search tree.
Notice that since constraints $v = Tx \geq \xi^j$ and $v=Tx \ngeq \xi^j$ appear in the left and (new) right branches \eqref{T-leftbranch} and \eqref{rightbranch-new}, respectively, no overlap exists between the feasible regions (and their projections onto the $x$ space) of the two branches.
\begin{Remark}
	\label{remark:left}
	By dividing the left branch \eqref{T-leftbranch} into 
	\begin{align}
		& O^{\rm{L}_1}=\min\left\{ c^\T x \stt (x,v,z) \in \CC,~v \geq \xi^{j},
		~ \sum_{i \in [n]\backslash\{j\}} p_i z_i \leq \epsilon - p_j \right\},\label{leftbranch1}\\
		& O^{\rm{L}_2}=\min\left\{ c^\T x \stt (x,v,z) \in \CC,~v \geq \xi^{j},
		~ \epsilon - p_j< \sum_{i \in [n]\backslash\{j\}} p_i z_i \leq \epsilon \right\},\label{leftbranch-new}
	\end{align}
	and noting that $O^{\rm{L}_1} \geq O^{\rm{R}}$, we can also keep the right branch \eqref{T-rightbranch} but add the new left branch \eqref{leftbranch-new} with a smaller feasible region than  that of \eqref{T-leftbranch} into the search tree. 
	Clearly, the feasible regions of problems \eqref{leftbranch-new} and \eqref{T-rightbranch} also do not contain an overlap.
	However, different from those of branches \eqref{T-leftbranch} and \eqref{rightbranch-new}, 
	the projections of the feasible regions of branches \eqref{leftbranch-new} and \eqref{T-rightbranch} onto the $x$ space may still contain overlaps.
	An illustrative example for this is provided in Appendix \ref{sec:example}.
	Thus, the redundancy still exists in the search tree as an optimal solution $x^*$ of \eqref{ccps} may simultaneously define feasible solutions of the two branches \eqref{leftbranch-new} and \eqref{T-rightbranch}.
\end{Remark}

\begin{Remark}
	The overlap in the {\rm \BnC} search tree has also been identified by 
	{\rm \citet{Qiu2014}} and {\rm \citet{Chen2023}} in the context of solving the $k$-violation
	linear programming and covering location problems. 
	\rev{In particular, such problems contain indicator constraints of the form $z_i = 0$ $\Rightarrow$ $A_i^\top x \geq b_i$,
	and when  branching on binary variable $z_i$, the linear constraint  $A_i^\top x \leq b_i$ {\rm(}or $A_i^\top x \leq b_i-1$ if $A_i \in \mathbb{Z}^d$ and $b_i \in \mathbb{Z}${\rm)} can be enforced in the right branch $(z_i =1)$ to remove the overlap.}
	However, unlike those in 
	{\rm\cite{Qiu2014}} and {\rm\cite{Chen2023}} where the overlap can be removed by linear constraints, we need to use the highly nonlinear constraints  $v \ngeq \xi^j$ to remove the overlap arising in solving the \CCPs.
\end{Remark}

The overlap $\CO$ in \eqref{Overlap} can also be removed by refining formulation \eqref{milp}.
To do this, let us first present problem \eqref{ccps} as
\begin{equation}
	\label{ccps2}\tag{CCP'}
	\min\left\{c^\T x \stt Tx=v, ~x \in \X, ~\sum_{i=1}^n p_i \chi (v \ngeq \xi^i) \leq \epsilon \right\},
\end{equation}
where $\chi$ is an indicator function:
\begin{equation}
	\label{indicator2}
	\chi(v \ngeq \xi^i) = \left\{\begin{array}{ll}
		1, & \ \text{if}~v \ngeq \xi^i;\\[3pt]
		0, & \ \text{otherwise},
	\end{array}\right.~\forall~i \in[n].
\end{equation}
Let $\F^{\text{\CCP}}$ and $\F^{\text{\CCP'}}$ denote the feasible regions of problems \eqref{ccps} and \eqref{ccps2}, respectively.
Then we must have $\F^{\text{\CCP}} = \proj_x(\F^{\text{\CCP'}})$, and hence problems \eqref{ccps} and \eqref{ccps2} are equivalent.
Unlike formulation \eqref{ccps2} where  \eqref{indicator2}  simultaneously ensures $\chi(v \ngeq \xi^i) =0 \Rightarrow v \geq \xi^i$ and $\chi(v \ngeq \xi^i) =1 \Rightarrow v \ngeq \xi^i$, formulation \eqref{milp} with \eqref{milp-consvz} and $z_{i}\in \{0,1\}$ only guarantees $z_{i}=0 \Rightarrow v \geq \xi^i$ but cannot guarantee $z_{i}=1 \Rightarrow v \ngeq \xi^i$. 
In other words, it is possible that formulation \eqref{milp} has a feasible solution $(x^*,v^*,z^*)$ such that $z^*_{i}=1$ and $v^* \geq \xi^i$ (though setting $z^*_{i}=0$ yields another feasible solution of formulation \eqref{milp} with the same objective value and hence formulation \eqref{milp} is a valid formulation for \eqref{ccps}).

However, only enforcing $z_{i}=0 \Rightarrow v \geq \xi^i$ in formulation \eqref{milp} makes the overlaps in the search tree.
To see this, let us add the valid nonlinear \emph{if-then} constraints
\begin{equation}
	\label{cons-ifthen}
	z_i = 1 \Rightarrow v \ngeq \xi^i, ~\forall~ i \in [n]\,,\\
\end{equation}
into formulation \eqref{milp} and obtain a mixed integer nonlinear programming (\MINLP) formulation for \eqref{ccps}:
\begin{equation}
	\label{minlp}\tag{MINLP}
	\min\left\{c^\T x \stt \eqref{milp-consx}-\eqref{milp-consbound},\eqref{cons-ifthen} \right\}.
\end{equation}
Let $\F^{\text{\MILP}}$ and $\F^{\text{\MINLP}}$ be the feasible sets of formulations \eqref{milp} and \eqref{minlp}, respectively.
As $\proj_x(\F^{\text{\MILP}})= \proj_x(\F^{\text{\MINLP}})=\F^{\text{\CCP}}$,
formulations \eqref{minlp} and \eqref{milp} are equivalent.
Now, for formulation \eqref{minlp}, branching on variable $z_j$ will not lead to an overlap in the $x$ space as $v=Tx\geq \xi^j$ and $v=Tx \ngeq \xi^j$ are enforced in the left and right branches, respectively.
As constraints \eqref{milp-consvz} and \eqref{cons-ifthen} are all enforced in problem \eqref{minlp},
a node $a=(\CN^a_0, \CN^a_1)$ in the search tree includes the linear constraints $v \geq \xi^i$, $i \in \CN_0^a$, and the \emph{disjunctive constraints} $v \ngeq \xi^i $, $i \in \CN_1^a$.
Therefore, the result can be generalized to any two descendant nodes of the left and right branches of a node, as detailed in the following theorem.

\begin{Theorem}
	\label{thm:overlap}
	Let $a=(\CN_0^a, \CN_1^a)$ and $b=(\CN_0^b, \CN_1^b)$ be any two nodes in the search tree of formulation \eqref{minlp} and $c$ be their first common ancestor. 
	If $c$ differs from $a$ and $b$, 
	then 
	\begin{equation}\label{projres}
		\proj_{x}(\F^{\text{\rm \MINLP}}(\CN_0^a, \CN_1^a))\cap 
		\proj_{x}(\F^{\text{\rm \MINLP}}(\CN_0^b, \CN_1^b))= \varnothing,
	\end{equation}
	where $\F^{\text{\rm \MINLP}}(\CN_0^a, \CN_1^a)$ and $\F^{\text{\rm \MINLP}}(\CN_0^b, \CN_1^b)$ are the feasible regions of nodes $a$ and $b$, respectively.
\end{Theorem}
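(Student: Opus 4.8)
The plan is to exploit the disjunctive structure that the if-then constraints \eqref{cons-ifthen} impose at the branching variable of the common ancestor $c$. First I would observe that since $c$ is the \emph{first} common ancestor of $a$ and $b$ and differs from both, it is an internal node that branches on a single variable, say $z_j$, creating a left child with $z_j$ fixed to $0$ and a right child with $z_j$ fixed to $1$. Because $c$ is the first common ancestor, the nodes $a$ and $b$ cannot both descend from the same child of $c$; otherwise that child would itself be a common ancestor lying strictly below $c$, contradicting that $c$ is their first common ancestor. Hence one of them, say $a$, lies in the subtree rooted at the left child and the other, $b$, in the subtree rooted at the right child, so that $j \in \CN_0^a$ and $j \in \CN_1^b$.

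The key step is to track what these two branching decisions enforce on the auxiliary variable $v$ and then push the conclusion down to the $x$-space. Since node $a$ has $z_j$ fixed to $0$, constraint \eqref{milp-consvz} forces $v \geq \xi^j$ at every feasible point of node $a$; since node $b$ has $z_j$ fixed to $1$, the if-then constraint \eqref{cons-ifthen} forces $v \ngeq \xi^j$ at every feasible point of node $b$. The crucial observation is that constraint \eqref{milp-consx} pins $v = Tx$, so $v$ is completely determined by $x$. Consequently every $x \in \proj_{x}(\F^{\text{\rm \MINLP}}(\CN_0^a, \CN_1^a))$ satisfies $Tx \geq \xi^j$, whereas every $x \in \proj_{x}(\F^{\text{\rm \MINLP}}(\CN_0^b, \CN_1^b))$ satisfies $Tx \ngeq \xi^j$.

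Finally I would conclude by noting that $Tx \geq \xi^j$ and $Tx \ngeq \xi^j$ are mutually exclusive---the former requires all components of $Tx - \xi^j$ to be nonnegative, the latter requires at least one of them to be strictly negative---so no $x$ can belong to both projections, which is precisely \eqref{projres}. The step I expect to require the most care is the passage from the $v$-space to the $x$-space: the disjunction between $v \geq \xi^j$ and $v \ngeq \xi^j$ covers the entire $v$-space, but it only induces a genuine partition of the admissible $x$ because $v = Tx$ ties $v$ uniquely to $x$. Were $v$ a free auxiliary variable, the two projections could still overlap---this is exactly the phenomenon flagged for the alternative split in \cref{remark:left}. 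A minor bookkeeping point is to confirm that the constraints inherited from the branching at $c$ are never undone deeper in the subtrees of $a$ and $b$, which holds because fixings are only accumulated, never removed, along any root-to-node path.
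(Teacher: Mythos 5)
Your proof is correct and follows essentially the same route as the paper's: identify the branching variable $z_j$ at the first common ancestor, note that $v = Tx \geq \xi^j$ is enforced throughout the subtree containing $a$ and $v = Tx \ngeq \xi^j$ throughout the subtree containing $b$, and conclude disjointness of the $x$-projections from the mutual exclusivity of these two conditions. The paper states this in two lines as a ``without loss of generality'' argument; your version simply makes explicit the supporting details (why $a$ and $b$ lie in different subtrees, that fixings persist down the tree, and that $v = Tx$ is what transfers the disjunction to the $x$-space), all of which are sound.
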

\begin{proof}
	Without loss of generality, suppose that $z_j$ is branched on at node $c$, and nodes $a$ and $b$ are in the left and right branches of node $c$, respectively. 
	As $v=Tx \geq \xi^j$ and $v =Tx \ngeq \xi^j$ are enforced in the left and right branches, respectively, \eqref{projres} holds.
\end{proof}
By \cref{thm:overlap}, in the search tree of formulation \eqref{minlp}, no $x \in \R^d$ can simultaneously define feasible solutions for any two descendant nodes of the left and right branches of a node.
This is intrinsically different from the search tree of formulation \eqref{milp} in which a solution $x$ may define feasible solutions for two descendant nodes of the left and right branches of a node.
Thus, it can be expected that the search tree of applying the \BnC algorithm to formulation \eqref{minlp} is smaller than that of applying the \BnC algorithm to formulation \eqref{milp}.

In contrast to formulation \eqref{milp} which is an \MILP problem, formulation \eqref{minlp} is, however, a relatively hard \MINLP problem due to the nonlinear if-then constraints \eqref{cons-ifthen}.
Moreover, to the best of our knowledge, the highly nonlinear if-then constraints  \eqref{cons-ifthen} cannot be directly tackled by state-of-the-art \MINLP solvers.
In the next two sections, we will propose a novel branching and overlap-oriented node pruning and variable fixing techniques based on formulation \eqref{milp} 
that explicitly employ the if-then constraints \eqref{cons-ifthen} in removing the overlaps in the search tree.

\section{Dominance-based branching}
\label{sect:precedence branch}
As shown in \cref{subsect:removing overlaps}, while using \eqref{cons-ifthen} guarantees that no overlap exists between the left branch \eqref{T-leftbranch} and right branch \eqref{rightbranch-new}, it also leads to a hard \MINLP subproblem \eqref{rightbranch-new}.
To resolve the difficulty, in this section, we first leverage the concept of \emph{dominance} between scenarios \cite{Ruszczynski2002} and propose a novel branching, called dominance-based branching, which achieves a better tradeoff between the size of the overlap and the solution efficiency of the two branches. 
Then we discuss its relation to the classic variable branching that is applied to problem \eqref{milp} with the dominance inequalities in \cite{Ruszczynski2002}.
Subsequently, we enhance the proposed branching by presenting a technique to derive more dominance pairs between different scenarios.
Finally, we use an example to illustrate the effectiveness of the proposed dominance-based branching for solving problem \eqref{milp}.

\subsection{Description of the dominance-based branching}
\label{subsect:description}

A scenario $i$ is dominated by a scenario $j$ if $\xi^i \leq \xi^j$, denoted as $i \preceq j$.
Let
\begin{equation}
	\CN_j^-= \left\{i \in [n] \stt \xi^i \leq \xi^j \right\}
\end{equation}
be the index set of scenarios that are dominated by scenario $\xi^j$ (including scenario $\xi^j$).
For a feasible solution $(x,v,z)$ of problem \eqref{minlp} with $z_j =0$, we have $v \geq \xi^j$.
This, together with $\xi^i \leq \xi^j$, $z_i \in \{0,1\}$, and the if-then constraints $z_i=1 \Rightarrow v \ngeq \xi^i $ in \eqref{cons-ifthen} for $i \in  \CN_j^-$, implies $z_i=0$ for all $i \in \CN_j^-$ and  
 \begin{align}
		& \left\{(x,v,z) \stt \eqref{milp-consx}-\eqref{milp-consbound},~\eqref{cons-ifthen}, ~z_j =0 \right\} \label{Lb1}\\	
		& \qquad \subseteq  \left\{(x,v,z) \stt \eqref{milp-consx}-\eqref{milp-consbound},~z_i =0,~\forall~i \in \CN_j^- \right\} \label{Lb2} \\
		& \qquad \subseteq  \left\{(x,v,z) \stt \eqref{milp-consx}-\eqref{milp-consbound}, ~z_j=0\right\}.  \label{Lb3} 
\end{align}
Observe that \eqref{Lb1} and \eqref{Lb3} are the feasible sets of problems \eqref{minlp} and \eqref{milp} with $z_j =0$, respectively. 
By \eqref{Lb1}--\eqref{Lb3} and the fact that exploring a left branch with its feasible set being \eqref{Lb1} or \eqref{Lb3} in the search tree can all return a correct optimal solution for problem \eqref{ccps},
we can develop a new branching that explores a new \emph{linear} left branch with its feasible set being \eqref{Lb2}:
\begin{equation}
	\label{N-leftbranch}
	O^{\text{L}_\text{D}}=\min\left\{c^\T x \stt \eqref{milp-consx}-\eqref{milp-consbound},~z_i =0,~\forall~i \in \CN_j^- \right\}.
\end{equation}

Similarly, we can also derive a \emph{linear} right branch with a potentially more compact feasible set.
In particular, let
\begin{align}
	 \CN_j^+= \left\{i \in [n] \stt \xi^j \leq \xi^i \right\}
\end{align}
be the index set of scenarios that dominate scenario $\xi^j$  (including scenario $\xi^j$), and
$(x,v,z)$ be a feasible solution of problem \eqref{minlp} with $z_j =1$.
From \eqref{cons-ifthen}, $v \ngeq \xi^j$ holds, which, together with $\xi^j \leq \xi^i$ for $i \in  \CN_j^+$, implies that $v \ngeq \xi^i$ must hold. 
From $v \geq \xi^i (1-z_i)$ and $z_i \in \{0,1\}$, $z_i=1$ must be satisfied for $i \in \CN_j^+$.
Using a similar argument as that of deriving the left branch \eqref{N-leftbranch}, we can derive a new equivalent right branch:
\begin{equation}
	\label{N-rightbranch}
	O^{\text{R}_\text{D}}=\min\left\{c^\T x \stt \eqref{milp-consx}-\eqref{milp-consbound},~z_i =1,~\forall~i \in  \CN_j^+\right\}.
\end{equation}

Note that branches \eqref{N-leftbranch} and \eqref{N-rightbranch} are of the form \eqref{milp}, and so the above dominance-based branching  can be applied to any node of the search tree. 
In particular, for a node $(\CB_0, \CB_1)$ in the search tree, the index sets of variables, that can be  fixed at zero and one, are 
\begin{equation}
	\label{N0N1}
	\begin{aligned}
	\CN_0 = \bigcup_{j \in \CB_0}  \CN_j^-~\text{and}~
	\CN_1 = \bigcup_{j \in \CB_1}  \CN_j^+,
	\end{aligned}
\end{equation}
respectively.

Compared with the two branches \eqref{T-leftbranch} and \eqref{T-rightbranch} with the classic variable branching applied, the new branches \eqref{N-leftbranch} and \eqref{N-rightbranch} with the proposed dominance-based branching applied are still \MILP problems but with more compact feasible regions.
Thus, compared with the overlap $\mathcal{O}$ in \eqref{Overlap}, the overlap between the feasible regions of branches \eqref{N-leftbranch} and \eqref{N-rightbranch} (after removing the common fixed variable $z_j$), i.e., 
{\small
\begin{equation*}
	\begin{aligned}
	&\CO'=\left \{(x,v,z) \in \CC \stt v \geq \xi^{j},
	~p_{j}+\sum_{i \in [n]\backslash\{{j}\}} p_i z_i \leq \epsilon, \right.\\
	& \left.\qquad \qquad\qquad~z_i =0,~\forall~i \in  \CN_j^-\backslash\{j\}, ~z_i =1,~\forall~i \in  \CN_j^+\backslash\{j\} \vphantom{\sum_{i \in [n]\backslash\{{j}\}} p_i z_i} \right\},
	\end{aligned}
\end{equation*}}%
is much smaller, especially when $ \CN_j^-$ and  $ \CN_j^+$ are large.
Consequently, certain nodes that are explored with the classic variable branching applied may not be explored with the proposed dominance-based branching applied (i.e., the feasible sets of these nodes are subsets of $\CO \backslash \CO'$). 
Due to this advantage, it can be expected that the \BnC search tree with the proposed dominance-based branching applied will be potentially much smaller than that with the classic variable branching applied.

The more compact feasible region of the new right branch \eqref{N-rightbranch} may lead to infeasibility or enable to provide a potentially much stronger \LP relaxation bound than that provided by the old one \eqref{T-rightbranch}. 
An example for illustrating this will be provided in \cref{subsect:example}.
This advantage opens up more possibilities to apply the infeasibility or bound exceeding based node pruning and guides the selection of branching variables (if strong branching or its variants \cite{Achterberg2005} are applied), thereby enabling to further reduce the tree size.
Notice that for an optimal solution $(x,v,z)$ of the \LP relaxation of branch \eqref{T-leftbranch}, setting $z_{i}=0$ for all $i \in  \CN_j^-$ yields a feasible solution of branch \eqref{N-leftbranch} with the same objective value, and thus the new left branch \eqref{N-leftbranch} provides the same \LP relaxation bound as that provided by the old one \eqref{T-leftbranch}.

It is worth remarking that the proposed dominance-based branching can be implemented along with various \emph{branching strategies} such as strong branching and the most infeasible branching \cite{Achterberg2007} that choose a variable to branch on for the current \LP relaxation.
In \cref{sect:computational results}, we will describe how to implement the proposed dominance-based branching into a state-of-the-art open source \MILP solver while using its default fine-tuned branching strategy.

\subsection{Relation to the result in \cite{Ruszczynski2002}}
\label{subsect:existing work}

As shown in the previous subsection, for a feasible solution $(x,v,z)$ of problem \eqref{minlp}, if $z_j=0$, then $z_i =0$ holds for $i \in [n]\backslash \{j\}$ with $\xi^i \leq \xi^j$ (or equivalently, 
if $z_i=1$, then $z_j =1$ holds for $j \in [n]\backslash \{i\}$ with $ \xi^i \leq \xi^j$).
Thus, the following \emph{dominance inequalities} 
\begin{equation}
	\label{precedenceconstraints}
	z_i \leq z_j, ~\forall~(i, j) \in \A := \{(i, j) \in [n]\times [n] \stt \xi^{i} \leq \xi^{j}, ~i \neq j\}
\end{equation}
are valid for problem \eqref{minlp}.
This, together with the equivalence of problems \eqref{minlp} and \eqref{milp}, shows that inequalities \eqref{precedenceconstraints} are also valid for problem \eqref{milp}. 
This result, first established by \citet{Ruszczynski2002}, is formally stated as follows.
\begin{Theorem}[\cite{Ruszczynski2002}]
	\label{thm:sameobj}
	The \MILP problem
	\begin{equation}
		\label{RF}
		\min\left\{c^\T x \stt \eqref{milp-consx}-\eqref{milp-consbound}, \eqref{precedenceconstraints} \right\}
	\end{equation}
	is equivalent to problem \eqref{milp} in terms of sharing at least one identical optimal solution.
\end{Theorem}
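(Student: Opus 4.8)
The plan is to exploit that formulation \eqref{RF} is nothing but formulation \eqref{milp} with the extra inequalities \eqref{precedenceconstraints} appended, so its feasible set is contained in that of \eqref{milp} and its optimal value is therefore at least that of \eqref{milp}. Consequently it suffices to exhibit, starting from an arbitrary optimal solution of \eqref{milp}, a point with the same objective value that \emph{also} satisfies every dominance inequality in \eqref{precedenceconstraints}; such a point is feasible, hence optimal, for \eqref{RF}, while remaining optimal for \eqref{milp}, which is exactly the shared optimal solution the theorem asks for.

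First I would fix an optimal solution $(x^*,v^*,z^*)$ of \eqref{milp} and keep $x^*$ and $v^*$ untouched, replacing only the binary part by the canonical assignment $\hat z_i = \chi(v^* \ngeq \xi^i)$, that is, $\hat z_i = 0$ when $v^* \geq \xi^i$ and $\hat z_i = 1$ otherwise. The key preliminary observation is that $\hat z \leq z^*$ componentwise: whenever $v^* \ngeq \xi^i$, the big-$M$ constraint \eqref{milp-consvz} forces $z_i^* = 1$, so $\hat z_i = 1$ can occur only at indices where $z_i^* = 1$, while $\hat z_i = 0 \leq z_i^*$ at all remaining indices.

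Next I would verify that $(x^*,v^*,\hat z)$ is feasible for \eqref{RF}. Constraints \eqref{milp-consx} and the continuous bounds are unaffected; the big-$M$ constraints \eqref{milp-consvz} hold by construction, since $\hat z_i = 0$ precisely when $v^* \geq \xi^i$; and the knapsack constraint \eqref{milp-consknap} is preserved because $\hat z \leq z^*$ yields $\sum_i p_i \hat z_i \leq \sum_i p_i z_i^* \leq \epsilon$. The dominance inequalities then follow from the monotonicity of the indicator along the dominance order: for $(i,j) \in \A$, i.e. $\xi^i \leq \xi^j$, if $\hat z_j = 0$ then $v^* \geq \xi^j \geq \xi^i$, whence $\hat z_i = 0$, so $\hat z_i \leq \hat z_j$ in all cases. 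Since the objective depends only on $x^*$, the new point retains the value $c^\T x^*$ and is thus optimal for both formulations. The heart of the argument is simply choosing this canonical rounding of $z^*$ down to $\chi(v^* \ngeq \xi^i)$, and the only place demanding care is confirming that this single choice simultaneously preserves the knapsack budget and induces the monotonicity required by \eqref{precedenceconstraints}; I do not expect a genuine obstacle beyond pinning down this assignment, since the independence of the objective from $z$ means that producing a feasible same-$x^*$ point is all that is at stake.
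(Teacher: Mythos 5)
Your proof is correct and matches the argument the paper itself uses: the canonical rounding $\hat z_i = \chi(v^* \ngeq \xi^i)$ produces exactly a feasible point of formulation \eqref{minlp}, and the paper establishes the theorem by observing that the dominance inequalities \eqref{precedenceconstraints} are valid for \eqref{minlp} together with the equivalence of \eqref{minlp} and \eqref{milp}. Your verification of the knapsack constraint via $\hat z \leq z^*$ and of the dominance inequalities via monotonicity of the indicator is precisely that same mechanism, just written as an explicit inline construction rather than routed through the \eqref{minlp} formulation.
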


\begin{Remark}
	\label{compactineq}
	If $\xi^i \leq \xi^s \leq \xi^j$ for some distinct $i,j,s \in [n]$, the dominance inequality $z_i \leq z_j$ is implied by inequalities $z_i \leq z_s$ and $z_s \leq z_j$.
	Thus, constraints \eqref{precedenceconstraints} in problem \eqref{RF} can be simplified as $z_i\leq z_j$ for $(i,j) \in \A'$ where $$\A' :=\left\{(i, j) \in \A \stt \text{no $s \in [n]\backslash \{i,j\}$ with $\xi^i \leq \xi^s \leq \xi^j$ exists}\right\}.$$
\end{Remark}

We can further strengthen the result in \cref{thm:sameobj} by showing that the {\rm \LP} relaxations of problems \eqref{milp} and \eqref{RF} are equivalent. 
This indicates that the dominance inequalities cannot improve the \LP relaxation of problem \eqref{milp}. 
\begin{Proposition}
	\label{lemma:samelprelaxation}
	The {\LP} relaxations of problems \eqref{milp} and \eqref{RF} are equivalent. 
\end{Proposition}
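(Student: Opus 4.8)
The plan is to show the two LP relaxations attain the same optimal value by comparing their feasible regions. Since \eqref{RF} is obtained from \eqref{milp} by appending the dominance inequalities \eqref{precedenceconstraints}, the feasible region of the LP relaxation of \eqref{RF} is contained in that of \eqref{milp}; as both are minimization problems, this gives for free that the optimal value of \eqref{RF}'s relaxation is at least that of \eqref{milp}'s. The entire content is thus the reverse inequality, and because the objective $c^\T x$ depends only on $x$, it suffices to prove that every $x$ feasible for the LP relaxation of \eqref{milp} is also feasible for the LP relaxation of \eqref{RF}; equivalently, that the projections of the two LP feasible regions onto the $x$-space coincide.

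First I would fix a feasible point $(x,v,z)$ of the LP relaxation of \eqref{milp} and, keeping $x$ and $v=Tx$ unchanged, replace $z$ by the coordinatewise-minimal vector $z'$ still compatible with the big-$M$ constraints \eqref{milp-consvz} and with $z' \ge \boldsymbol{0}$. Concretely, for each $i \in [n]$ I set
\[
z'_i = \max\Bigl\{0,\ 1 - \min_{k\,:\,\xi^i_k > 0} \tfrac{v_k}{\xi^i_k}\Bigr\},
\]
with the convention that the inner minimum equals $+\infty$ (so $z'_i = 0$) when $\xi^i = \boldsymbol{0}$. By construction $z'_i \in [0,1]$ and $v \ge \xi^i(1-z'_i)$ for all $i$, so \eqref{milp-consvz} holds; since $z$ was feasible we also have $z'_i \le z_i$, whence $\sum_i p_i z'_i \le \sum_i p_i z_i \le \epsilon$ and the knapsack constraint \eqref{milp-consknap} is preserved. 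As $x$ is untouched, the objective value is unchanged.

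The hard part will be checking that this minimal $z'$ automatically satisfies the dominance inequalities \eqref{precedenceconstraints}, i.e.\ that $\xi^i \le \xi^j$ forces $z'_i \le z'_j$. I would argue through the quantity $M(\zeta) := \min_{k\,:\,\zeta_k > 0} v_k/\zeta_k$ and show that $\xi^i \le \xi^j$ implies $M(\xi^i) \ge M(\xi^j)$: choosing an index $k'$ attaining $M(\xi^i)$ (so $\xi^i_{k'} > 0$, and therefore $\xi^j_{k'} \ge \xi^i_{k'} > 0$) yields $M(\xi^j) \le v_{k'}/\xi^j_{k'} \le v_{k'}/\xi^i_{k'} = M(\xi^i)$, where the middle inequality uses $v \ge \boldsymbol{0}$; the degenerate case $\xi^i = \boldsymbol{0}$ is immediate. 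Since $z'_i = \max\{0, 1 - M(\xi^i)\}$ is a nonincreasing function of $M(\xi^i)$, the bound $M(\xi^i) \ge M(\xi^j)$ gives $z'_i \le z'_j$ for every $(i,j) \in \A$, which are exactly the dominance inequalities. Hence $(x,v,z')$ is feasible for the LP relaxation of \eqref{RF} at the same objective value, the two projections onto the $x$-space coincide, and the optimal values are equal. I expect the monotonicity step to be the only delicate point, the remaining verifications being routine; the main care is with scenarios having zero components of $\xi^i$, for which the associated big-$M$ constraint is vacuous and the convention $M(\xi^i) = +\infty$ disposes of the boundary cleanly.
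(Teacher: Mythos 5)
Your proof is correct, but it takes a genuinely different route from the paper's. The paper works only with an \emph{optimal} solution $(\bar x,\bar v,\bar z)$ of the LP relaxation of \eqref{milp} and repairs violated dominance pairs one at a time: if $\bar z_{i_0} > \bar z_{j_0}$ for some $(i_0,j_0)\in\A$, it resets $\hat z_{i_0} = \bar z_{j_0}$, verifies that feasibility and the objective are preserved (using $\xi^{i_0}\le\xi^{j_0}$ and $\bar z_{j_0}\le 1$, exactly the monotonicity you exploit), and then iterates, with a counting argument to guarantee termination. You instead construct in closed form the componentwise-minimal multiplier vector compatible with $v$, namely $z'_i = \max\{0,\,1-M(\xi^i)\}$ with $M(\zeta)=\min_{k:\zeta_k>0} v_k/\zeta_k$, and prove that the dominance inequalities hold automatically because $M$ is antitone along the dominance order. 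This buys you two things: you avoid the iterative repair and its (somewhat informally argued) termination step, and you obtain a stronger conclusion --- the projections of the two LP feasible regions onto the $x$-space coincide, so \emph{every} feasible $x$ of \eqref{milp}'s relaxation, not merely an optimal one, survives the passage to \eqref{RF}; equality of optimal values is then immediate since the objective depends only on $x$. The paper's exchange argument is shorter to state but yields only that the two relaxations share an optimal solution. Your delicate points (the convention $M(\xi^i)=+\infty$ when $\xi^i=\boldsymbol{0}$, and the use of $v\ge\boldsymbol{0}$ in the chain $M(\xi^j)\le v_{k'}/\xi^j_{k'}\le v_{k'}/\xi^i_{k'}=M(\xi^i)$) are handled correctly, and the verification that $z'_i\le z_i$ (hence the knapsack constraint is preserved) is exactly where feasibility of the original point enters, as it should.
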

\begin{proof}	
	It suffices to show there exists an optimal solution $(x^*, v^*, z^*)$ of the \LP relaxation of problem \eqref{milp} 
	such that ${z}^*_{i} \leq {z}^*_{j}$ holds for all $(i,j) \in \A$. 
	Let $(\bar{x}, \bar{v}, \bar{z})$ be an optimal solution of the \LP relaxation of problem \eqref{milp}.
	If $\bar{z}_{i} \leq \bar{z}_{j}$ holds for all $(i,j)\in \A$, the statement follows.
	Otherwise, there exists $(i_0, j_0) \in \A$ such that $\bar{z}_{i_0} > \bar{z}_{j_0}$. 
	In this case, we construct a new point $(\hat{x}, \hat{v}, \hat{z})$ by setting 
	\begin{equation}
		\label{hatxdef}
		\hat{x} = \bar{x}, ~\hat{v} = \bar{v}, ~\hat{z}_{i_{0}} = \bar{z}_{j_{0}},
		~\text{and} ~\hat{z}_i = \bar{z}_i, ~\text{for all} ~i \in [n]\backslash \{i_{0}\}.
	\end{equation}
	Clearly, the objective values of the \LP relaxation of problem
	\eqref{milp} at points $(\bar{x}, \bar{v}, \bar{z})$ and $(\hat{x}, \hat{v}, \hat{z})$, respectively, are the same.
	In addition, $(\hat{x}, \hat{v}, \hat{z})$ is also a feasible solution of the \LP relaxation of problem \eqref{milp} as
	\begin{equation*}
		\hat{v} = \bar{v} \geq \xi^{j_{0}}(1-\bar{z}_{j_{0}})
		\geq \xi^{i_{0}}(1-\bar{z}_{j_{0}}) = \xi^{i_{0}}(1-\hat{z}_{i_{0}}),
	\end{equation*}
	where the second inequality follows from $\xi^{i_0} \leq \xi^{j_0}$ and $\bar{z}_{j_0} \leq 1$.
	Recursively applying the above operation and using the fact that the value of each $z_i$ decreases at most $n$ times, we will obtain an optimal solution $(x^*, v^*, z^*)$ of the \LP relaxation of problem \eqref{milp} 
	such that ${z}^*_{i} \leq {z}^*_{j}$ holds for all $(i,j) \in \A$.
\end{proof}

Next, we show that (i)  applying the proposed dominance-based branching in \cref{subsect:description} to problem \eqref{milp}
is theoretically equivalent to (ii) applying the classic variable branching to problem \eqref{RF} in the following two aspects.
First, for two nodes in the two search trees constructed by (i) and (ii) characterizing by the same branching variables  $\CB_0$  and $\CB_1$, the index sets of variables fixed at zero and one are all identical.
Indeed, for case (i), as shown in \cref{subsect:description}, the index sets of variables fixed at zero and one are the $\CN_0$ and $\CN_1$, respectively, defined in \eqref{N0N1}; and
for case (ii), the index sets of variables fixed at zero and one at node $(\CB_0,\CB_1)$
(due to branching or fixing by the dominance inequalities in \eqref{precedenceconstraints})
are also $\CN_0$ and $\CN_1$, respectively.
Second, the \LP relaxations of the two nodes are also equivalent, as detailed in the following corollary.
\begin{Corollary}
	\label{thm:samenodelprelaxation1}
	Consider a node $(\CB_0, \CB_1)$ in the search tree constructed by {\rm (i)} or {\rm (ii)} and let $\CN_0$ and $\CN_1$ be defined as in \eqref{N0N1}.
	The {\LP} relaxation of problem \eqref{milp} with $z_{i}=0$ for $i \in \CN_0$ and $z_{i}=1$ for $i \in \CN_1$ is equivalent to the {\LP} relaxation of problem \eqref{RF} with $z_{i}=0$ for $i \in \CN_0$ and $z_{i}=1$ for $i \in \CN_1$.
\end{Corollary}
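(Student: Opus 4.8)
The plan is to reuse the perturbation argument already established in the proof of \cref{lemma:samelprelaxation}, adapting it so that it respects the additional variable fixings. Since the feasible region of the \LP relaxation of \eqref{RF} with the fixings $z_i=0$, $i\in\CN_0$, and $z_i=1$, $i\in\CN_1$, is obtained from that of \eqref{milp} with the same fixings merely by imposing the extra dominance inequalities \eqref{precedenceconstraints}, the former is contained in the latter. Hence the optimal value of the \LP relaxation of \eqref{RF} is at least that of \eqref{milp}, and it suffices to construct, from an optimal solution of the \eqref{milp} \LP with the fixings, a feasible solution of the \eqref{RF} \LP with the fixings of the same objective value.

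Before invoking the swap, I would first record two closure properties of the index sets defined in \eqref{N0N1}: the set $\CN_0$ is downward closed and $\CN_1$ is upward closed under the dominance order, that is, $j\in\CN_0$ and $\xi^i\leq\xi^j$ imply $i\in\CN_0$, while $i\in\CN_1$ and $\xi^i\leq\xi^j$ imply $j\in\CN_1$. Both follow immediately from the definitions $\CN_0=\bigcup_{j\in\CB_0}\CN_j^-$ and $\CN_1=\bigcup_{j\in\CB_1}\CN_j^+$ together with the transitivity of the component-wise order.

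With these at hand, I would run the recursive swap exactly as in \cref{lemma:samelprelaxation}: starting from an optimal solution $(\bar x,\bar v,\bar z)$ of the \eqref{milp} \LP with the fixings, whenever some $(i_0,j_0)\in\A$ satisfies $\bar z_{i_0}>\bar z_{j_0}$, lower $z_{i_0}$ to $\bar z_{j_0}$ while leaving $x$, $v$, and all other $z$-components unchanged. As verified there, each such step preserves feasibility for \eqref{milp} and keeps the objective value fixed, and the procedure terminates. The new content is to check that no swap disturbs the fixings. A variable fixed at $0$ (i.e., $i_0\in\CN_0$) already has $\bar z_{i_0}=0$, so it can never be the component to be decreased; and a variable fixed at $1$ (i.e., $i_0\in\CN_1$) can never trigger a swap, because $(i_0,j_0)\in\A$ gives $\xi^{i_0}\leq\xi^{j_0}$, so upward closedness yields $j_0\in\CN_1$, whence $\bar z_{j_0}=1=\bar z_{i_0}$ and the strict inequality $\bar z_{i_0}>\bar z_{j_0}$ fails. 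Thus no fixed component is ever altered.

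Upon termination, the resulting point $(x^*,v^*,z^*)$ satisfies $z_i^*\leq z_j^*$ for all $(i,j)\in\A$ and still obeys every fixing, hence is feasible for the \eqref{RF} \LP with the fixings and attains the optimum of the \eqref{milp} \LP with the fixings; combined with the containment direction this gives equal optimal values, which is the asserted equivalence. The main obstacle, and the only point going beyond a direct appeal to \cref{lemma:samelprelaxation}, is precisely the verification that the swaps never touch the fixed variables, which is where the downward/upward closure of $\CN_0$ and $\CN_1$ is essential; the degenerate case $\CN_0\cap\CN_1\neq\varnothing$ renders both \LPs infeasible and is handled trivially.
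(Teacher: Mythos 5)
Your proof is correct, but it is organized differently from the paper's. The paper disposes of the corollary in one line: it observes that imposing $z_i=0$ for $i\in\CN_0$ and $z_i=1$ for $i\in\CN_1$ turns \eqref{milp} into another problem of the form \eqref{milp} on the free index set $\CN_f=[n]\setminus(\CN_0\cup\CN_1)$, and turns \eqref{RF} into the corresponding problem of the form \eqref{RF}, so that \cref{lemma:samelprelaxation} applies verbatim to the reduced pair. You instead re-run the swap argument of \cref{lemma:samelprelaxation} inline on the original LP with the fixings imposed, verifying that no swap ever touches a fixed coordinate. The two routes rest on the same underlying facts: the downward/upward closedness of $\CN_0$ and $\CN_1$ that you prove explicitly is exactly what the paper's form-invariance claim implicitly needs — without it, a dominance inequality $z_i\leq z_j$ with $j\in\CN_0$ and $i$ free (or with $i\in\CN_1$ and $j$ free) would force additional fixings in the reduced \eqref{RF} that are absent from the reduced \eqref{milp}, and the reduction would break. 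So your version makes explicit a point the paper leaves tacit, at the cost of repeating the perturbation argument, while the paper's version is shorter because it reuses the proposition as a black box. One cosmetic remark: as in the paper's own proof of \cref{lemma:samelprelaxation}, starting from an \emph{optimal} solution tacitly assumes the LP is feasible and bounded; running your swap from an arbitrary feasible point yields the same conclusion (equal optimal values, including the infeasible and unbounded cases) with no extra effort.
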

\begin{proof}
	The result follows from \cref{lemma:samelprelaxation} and the fact that  a subproblem of \eqref{milp} or \eqref{RF} with $z_i=0$ for $i \in \CN_0$ and $z_i=1$ for $i \in \CN_1$ still takes the form of \eqref{milp} or \eqref{RF}, respectively.
\end{proof}
\noindent 
\cref{thm:samenodelprelaxation1} implies that the \LP relaxations of the two nodes $(\CB_0, \CB_1)$ in the two search trees share at least one identical optimal solution (if they are feasible).
Thus, if we choose the same variable to branch on for the identical (fractional) \LP relaxation solution using, e.g., strong branching strategy \cite{Achterberg2007}, the search trees constructed by (i) and (ii) will also be identical.

The theoretical equivalence of (i) and (ii) shows that although the dominance inequalities in \eqref{precedenceconstraints} cannot strengthen the \LP relaxation of problem \eqref{milp}, they can, as the proposed dominance\rev{-based} branching, enhance the \BnC algorithm of \MILP solvers by removing the uninteresting overlap in the search tree. 
This theoretical equivalence also sheds useful insights on the proposed dominance-based branching.
More specifically, the dominance-based branching can be treated as an enhanced version of classic variable branching to formulation \eqref{milp} that additionally uses the dominance relations in \eqref{precedenceconstraints} \rev{as local cuts} for fixing variables \rev{at the nodes of the search tree. As such, it allows for the use of sophisticated \MILP methodologies such as cutting planes and preprocessing techniques.}
In particular,  effective techniques for \CCPs like mixing cuts \cite{Atamturk2000a,Gunluk2001b} and their variants \cite{Luedtke2010a,Kucukyavuz2012,Abdi2016,Zhao2017,Klnc-Karzan2022} and the preprocessing technique in \cite{Luedtke2010a}
can all be applied along with the proposed dominance-based branching. 

\rev{
\begin{Remark}
	The above theoretical interpretation of the dominance-based branching also allows us to extend it to the chance-constrained program with random technology matrix: $\min \{c^\top x : \mathbb{P}\{\tilde{T} x \geq h\} \geq 1-\epsilon, \, x \in \X \}$, 
	where $\tilde{T}$ is a random constraint matrix with 
	$\mathbb{P}\{\tilde{T} = T^i\} = p_i \geq 0$ for $i \in [n]$ and $\sum_{i=1}^n p_i = 1$.
	In particular, for the corresponding big-M formulation: 
	$\min \left\{c^\top x \,:\, T^i x \geq h - M_i z_i, \,\forall\,i \in [n], \,\sum_{i = 1}^n p_i z_i \leq \epsilon, \,x \in \X, \,z \in \{0,1\}^n\right\}$, 
	using the technique of \citet{Ruszczynski2002}, we can derive the dominance relations $z_i \leq z_j$ for some pairs $i, j \in [n]$ with $i\neq j$, and use these dominance relations to fix the variables at the nodes of the search tree $($as to remove the uninteresting overlap$)$.
\end{Remark}
}

Although applying the dominance-based branching to problem \eqref{milp}
is theoretically equivalent to applying the classic variable branching to problem \eqref{RF}, the former, however, can avoid solving a possibly large \LP relaxation of problem \eqref{RF} (due to the addition of the dominance inequalities in  \eqref{precedenceconstraints}).
Therefore, it can be expected that applying the dominance-based branching to problem \eqref{milp} is  more computationally efficient.
In \cref{sect:computational results}, we will further present computational results to illustrate this.

\subsection{\rev{Deriving more dominance relations}}
\label{subsect:preprocess}

\rev{The effectiveness of the dominance-based branching depends critically on the presence of the dominance pairs $i \preceq j$.}
In general, the more the dominance pairs, the smaller the overlap $\CO'$, and thus the more effective the dominance-based branching.
However, the condition $\xi^i \leq \xi^j $ \rev{for the dominance pair $i \preceq j$} is quite restrictive, and in some applications, the number of dominance pairs is extremely small, leading to the ineffectiveness of the dominance-based branching.
\rev{\citet{Dentcheva2000,Lejeune2008,Luedtke2010a,Lejeune2010} proposed a preprocessing technique to derive lower bounds for variables $v$, 
thereby obtaining an equivalent formulation with a smaller problem size or tighter \LP relaxation. 
In the following, we will show that using such lower bounds, we can derive more dominance pairs $i \preceq j$, 
and therefore further enhance the proposed dominance-based branching.}

To proceed, for $k \in [m]$, let $\{\pi_k(1),\pi_k(2),\ldots,\pi_k(n)\}$ be a permutation of $[n]$ such that $\xi^{\pi_k(1)}_k \geq \xi^{\pi_k(2)}_k \geq \cdots \geq \xi^{\pi_k(n)}_k$.
Define $\tau_k := \min\{s \stt \sum_{i = 1}^s p_{\pi_k(i)} > \epsilon\}$.
From the knapsack constraint \eqref{milp-consknap}, $z_{\pi_k(t)}=1$, $t=1,2, \ldots, \tau_k$, cannot simultaneously hold for a feasible solution $(x,v,z)$ of formulation \eqref{milp}. 
Using this observation and the fact that $v_k \geq \xi_k^{\pi_k(t)} (1-z_{\pi_{k}(t)})$ and $z_{\pi_{k}(t)}\in \{0,1\}$ for $t=1,2,\ldots,\tau_k$,  a lower bound $\xi^0_k := \xi^{\pi_k(\tau_k)}_k$ for variable $v_k$ can be derived.
\begin{Lemma}
	\label{lemma:defxi0}
	{\rm(\cite{Dentcheva2000,Lejeune2008,Luedtke2010a,Lejeune2010})}
	$v_k \geq \xi_k^0$, $k \in [m]$, are valid for formulation \eqref{milp}.
\end{Lemma}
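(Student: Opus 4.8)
The plan is to prove validity directly: I would fix an arbitrary feasible solution $(x,v,z)$ of formulation \eqref{milp} together with an arbitrary coordinate $k \in [m]$, and show that the single component $v_k$ is bounded below by $\xi^0_k = \xi^{\pi_k(\tau_k)}_k$. The whole argument hinges on the interplay between the knapsack constraint \eqref{milp-consknap}, the big-$M$ constraints \eqref{milp-consvz}, and the integrality of the $z$ variables, exactly as sketched in the paragraph preceding the lemma. (As a preliminary remark I would note that $\tau_k$ is well defined, since $\sum_{i=1}^n p_i = 1 > \epsilon$ guarantees that some $s$ satisfies $\sum_{i=1}^s p_{\pi_k(i)} > \epsilon$.)

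First, I would argue that the binary variables $z_{\pi_k(1)}, z_{\pi_k(2)}, \ldots, z_{\pi_k(\tau_k)}$ associated with the $\tau_k$ scenarios having the largest $k$-th components cannot all equal one. Indeed, if they did, then by the definition of $\tau_k$ we would have $\sum_{t=1}^{\tau_k} p_{\pi_k(t)} z_{\pi_k(t)} = \sum_{t=1}^{\tau_k} p_{\pi_k(t)} > \epsilon$, and since all probabilities are nonnegative this forces $\sum_{i=1}^n p_i z_i > \epsilon$, contradicting \eqref{milp-consknap}. Hence there exists an index $t^* \in \{1,\ldots,\tau_k\}$ with $z_{\pi_k(t^*)} = 0$.

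Next, I would invoke the big-$M$ constraint \eqref{milp-consvz} for scenario $\pi_k(t^*)$ in coordinate $k$, which gives $v_k \geq \xi^{\pi_k(t^*)}_k (1 - z_{\pi_k(t^*)}) = \xi^{\pi_k(t^*)}_k$. Finally, since $t^* \leq \tau_k$ and the permutation $\pi_k$ was chosen so that the $k$-th components satisfy $\xi^{\pi_k(1)}_k \geq \cdots \geq \xi^{\pi_k(n)}_k$, I would conclude $\xi^{\pi_k(t^*)}_k \geq \xi^{\pi_k(\tau_k)}_k = \xi^0_k$, and therefore $v_k \geq \xi^0_k$. Because $(x,v,z)$ and $k$ were arbitrary, the inequalities are valid for formulation \eqref{milp}.

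As for difficulty, there is no substantial obstacle here; the only point requiring care is the existence claim in the first step, where I would use both the strict inequality in the definition $\tau_k = \min\{s \stt \sum_{i=1}^s p_{\pi_k(i)} > \epsilon\}$ (so that the top-$\tau_k$ block alone already exceeds the budget $\epsilon$) and the integrality of the $z_i$ (so that ``not all equal one'' yields an index that is \emph{exactly} zero rather than merely fractional). The monotonicity of the sorted ordering is what lets me replace the unknown threshold $\xi^{\pi_k(t^*)}_k$ by the worst case $\xi^{\pi_k(\tau_k)}_k$, and is precisely the reason the bound is stated with $\tau_k$ rather than with a smaller index.
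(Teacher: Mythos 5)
Your proof is correct and follows exactly the argument the paper itself sketches in the paragraph preceding the lemma: the knapsack constraint \eqref{milp-consknap} forces at least one of $z_{\pi_k(1)},\ldots,z_{\pi_k(\tau_k)}$ to be zero, the big-$M$ constraint \eqref{milp-consvz} then gives $v_k \geq \xi^{\pi_k(t^*)}_k$, and the descending order of the permutation yields $v_k \geq \xi^{\pi_k(\tau_k)}_k = \xi^0_k$. Your added observations (well-definedness of $\tau_k$ from $\sum_i p_i = 1 > \epsilon$, and the role of integrality in getting an index exactly equal to zero) are exactly the points the paper leaves implicit, so there is nothing to correct.
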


Let
\begin{equation}
	\label{def-newxi}
	\bar{\xi}_k^{i} := \max\left\{\xi_k^{i},\,\xi_k^{0}\right\},~\forall~i \in [n],~k \in [m],
\end{equation}
and
\begin{equation}\label{milp-consvzeq}\tag{2'}
	v \geq \bar{\xi}^{i}(1-z_i),~\forall~i\in [n].
\end{equation}
From \cref{lemma:defxi0} and $z \in \{0,1\}^n$, \eqref{milp-consvz} and \eqref{milp-consvzeq} are equivalent.
Thus, applying this preprocessing technique, we can obtain the following new equivalent \MILP formulation for problem \eqref{ccps}:
\begin{equation}
	\label{newMILP}
	\min\left\{c^\T x \stt\eqref{milp-consx},~\eqref{milp-consvzeq},~\eqref{milp-consknap},~\eqref{milp-consbound} \right\}.
\end{equation}
In the new \MILP formulation \eqref{newMILP}, if $\bar{\xi}^i \leq \bar{\xi}^j$, scenario $i$ is dominated by scenario $j$.
Observe that from \eqref{def-newxi}, $\bar{\xi}^i \leq \bar{\xi}^j$ is more likely to \rev{appear} than ${\xi}^i \leq {\xi}^j$, and thus more dominance relations 
\begin{equation}
	\label{newprecedenceconstraints}
	z_i \leq z_j, ~\forall~(i, j) \in \bar{\A} := \left\{(i, j) \in [n]\times [n] \stt\bar{\xi}^i \leq \bar{\xi}^j, ~i \neq j\right\}
\end{equation}
can be derived.
Now, applying the dominance-based branching with the dominance relations in $\bar{\A}$, we will obtain two more compact branches than \eqref{N-leftbranch} and \eqref{N-rightbranch}:
\begin{align}
		& \min\left\{c^\T x \stt \eqref{milp-consx}-\eqref{milp-consbound},~z_i =0,~\forall~i \in  \bar{\CN}_j^- \right\},	\label{strongleftbranch} \\
	& \min\left\{c^\T x \stt \eqref{milp-consx}-\eqref{milp-consbound},~z_i =1,~\forall~i \in \bar{\CN}_j^+\right\},	\label{strongrightbranch}
\end{align}
where $\bar{\CN}_j^- := \left\{i \in [n] \stt \bar{\xi}^i \leq \bar{\xi}^j \right\}$ 
and $\bar{\CN}_j^+ := \left\{i \in [n] \stt \bar{\xi}^j \leq \bar{\xi}^i \right\}$.
Moreover, with the decreasing of $\epsilon$, $\xi_k^0$ will become larger and more the dominance pairs $i \preceq j$ are likely to appear, implying that branches \eqref{strongleftbranch} and \eqref{strongrightbranch} will also become much more compact.

\begin{Remark}\label{re:strengthenrf}
	Adding 
	\begin{equation}
		\label{newprecedenceconstraints1}
		\!\!\! z_i \leq z_j, ~\forall~(i, j) \in \bar{\A}' := \left\{(i, j) \in \bar{\A}\, : \,  \text{no $s \in [n]\backslash \{i,j\}$ with $\bar{\xi}^i \leq \bar{\xi}^s \leq \bar{\xi}^j$ exists}\right\},
	\end{equation}
	the simplified version of \eqref{newprecedenceconstraints} {\rm(}see \cref{compactineq}{\rm)},
	into formulation \eqref{newMILP} yields another equivalent {\MILP} formulation. 
	This formulation is stronger than \eqref{RF} in terms of providing a more compact feasible region.
\end{Remark}

\subsection{An illustrative example}
\label{subsect:example}
We now apply the dominance-based branching to problem \eqref{expro} in \cref{example1} to demonstrate its effectiveness over the classic variable branching.

We first note that for problem \eqref{expro}, the lower bounds $\xi^0$ for variables $v$ stated in \cref{lemma:defxi0} \rev{read} $\left(\begin{array}{c} 4 \\ 1 \\ 6 \\ \end{array}\right)$.
Applying the preprocessing technique in \cref{subsect:preprocess}, we will obtain an equivalent  problem of \eqref{expro} where  $\xi^i$ is replaced by $\bar{\xi}^i$:
\begin{equation*}
	\bar{\xi}^{1} = \left(\begin{array}{c}  4 \\ 1 \\ 12 \\ \end{array}\right),\,
	\bar{\xi}^{2} = \left(\begin{array}{c}  4 \\ 1 \\ 10 \\ \end{array}\right),\,
	\bar{\xi}^{3} = \left(\begin{array}{c}  4 \\ 2 \\ 7 \\ \end{array}\right),\,
\end{equation*}
\begin{equation*}
	\bar{\xi}^{4} = \left(\begin{array}{c}  5 \\ 2 \\ 6 \\	\end{array}\right),\,
	\bar{\xi}^{5} = \left(\begin{array}{c}  6 \\ 2 \\ 6 \\ \end{array}\right),\,
	\bar{\xi}^{6} = \left(\begin{array}{c}  7 \\ 1  \\ 6 \\ \end{array}\right),\,
	\bar{\xi}^{7} = \left(\begin{array}{c}  12 \\ 1 \\ 6 \\ \end{array}\right).
\end{equation*}
With the preprocessing technique, we can detect $3$ dominance pairs: $2 \preceq 1$, $6\preceq 7$, and $4 \preceq 5$.
In contrast, only a single dominance pair $4 \preceq 5$ can be detected without the preprocessing technique.  

\cref{figure-newbranch} displays the search tree constructed by using the dominance-based branching to solve problem \eqref{expro} with the most infeasible branching rule applied.
Let us consider the branching of variable $z_4$ at the root node. 
Since $4 \preceq 5$,  we obtain  $\CN_4^+ = \{4,5\}$. 
The right branch $3$ of node $1$ is associated with $\CN_0 = \varnothing$ and $\CN_1 = \{4,5\}$.
Thus, node $6$ and its descendant nodes $12$, $13$, $20$, $21$, $30$, and $31$ in the previous search tree in \cref{figure1} do not need to be explored with the proposed dominance-based branching applied.

Using the classic variable branching, the \LP relaxation of the right branch $27$ of node $17$ in \cref{figure1} is still feasible with an optimal value of $56$, while using the proposed dominance-based branching, the \LP relaxation of the right branch $27$ of node $17$ in \cref{figure-newbranch} is infeasible, thereby avoiding further branching at node $27$.
Similarly, using the proposed dominance-based branching, the \LP relaxation of the right branch $19$ of node $11$ in \cref{figure-newbranch} has an optimal value of $65$, which is larger than the optimal value of problem \eqref{expro} (i.e.,  $59$), thereby also avoiding further branching at node $19$.

This example shows the effectiveness of the proposed dominance-based branching over the classic variable branching in reducing the search tree size.
Overall, using the proposed dominance-based branching with the preprocessing technique,  
only $19$ nodes need to be explored, which is $14$ less than that of the search tree in \cref{figure1} where the classic variable branching is applied.
\begin{figure}[!h]
	\everymath{\scriptscriptstyle}
	\centering
	\begin{tikzpicture}
		[level distance=13mm, thick,
		level 1/.style={sibling distance=66mm},
		level 2/.style={sibling distance=28mm},
		level 3/.style={sibling distance=26mm},
		level 4/.style={sibling distance=24mm},
		level 5/.style={sibling distance=22mm}]
		\node[vertex](1) {1}
		child{node[vertex](2) {2}
			child{node[vertex](4) {4}
				child[sibling distance =19mm]{node[vertex](8) {8}}
				child[sibling distance=19mm]{node[vertex](9) {9}
					child[sibling distance =19mm]{node[vertex](16) {16}}
					child[sibling distance=19mm]{node[vertex](17) {17}
						child[sibling distance =16mm]{node[vertex](26) {26}}
						child[sibling distance=19mm]{node[vertex](27) {27}}}}}
			child[sibling distance =30mm]{node[vertex](5) {5} 
				child[sibling distance =13mm]{node[vertex](10) {10}}
				child[sibling distance =30mm]{node[vertex](11) {11} 
					child[sibling distance =0.001mm]{node[vertex](18) {18}}
					child[sibling distance =30mm]{node[vertex](19) {19}}}}}
		child{node[vertex](3) {3}
			child[sibling distance =25mm]{node[vertex](14) {14}
				child[sibling distance=0.001mm]{node[vertex](22) {22}}
				child[sibling distance=31mm]{node[vertex](23) {23}}}
			child{node[vertex](15) {15}}};
		\begin{scope}[nodes = {below = 9pt,shift={(0.2cm,0cm)}}]
			\node at (8) {\tiny \makecell[l] {$z_{\text{LP}}=59$\\[2pt] $\CN^{8}_0 = \{3,4,5\}$\\[2pt] $\CN^{8}_1 = \varnothing$} };
			\node at (15) {\tiny \makecell[l] {$z_{\text{LP}} = 62$\\[2pt] $\CN^{15}_0 = \varnothing$\\[2pt] $\CN^{15}_1 = \{4,5,6,7\}$} };
			\node at (16) {\tiny \makecell[l] {$z_{\text{LP}}=62$\\[2pt] $\CN^{16}_0 = \{4,5,6\}$\\[2pt] $\CN^{16}_1 = \{3\}$} };
			\node at (18) {\tiny \makecell[l] {$z_{\text{LP}} = 62$\\[2pt]	$\CN^{18}_0 = \{2,4\}$\\[2pt] $\CN^{18}_1 = \{5,7\}$}	};
			\node at (19) {\tiny \makecell[l] {$z_{\text{LP}} = 65$\\[2pt]	$\CN^{19}_0 = \{4\}$\\[2pt]	$\CN^{19}_1 = \{1,2,5,7\}$}	};
			\node at (22) {\tiny \makecell[l] {$z_{\text{LP}} = 73$\\[2pt] $\CN^{22}_0 = \{2,6\}$\\[2pt] $\CN^{22}_1 = \{4,5\}$} };
			\node at (23) {\tiny \makecell[l] {$z_{\text{LP}} = 62.6$\\[2pt] $\CN^{23}_0 = \{6\}$\\[2pt] $\CN^{23}_1 = \{1,2,4,5\}$} };
			\node at (26) {\tiny \makecell[l] {$z_{\text{LP}} =68$\\[2pt] $\CN^{26}_0 = \{2,4,5\}$\\[2pt] $\CN^{26}_1 = \{3,6,7\}$} };
			\node at (27) {\tiny \makecell[l] {$\text{Infeasible}$\\[2pt] $\CN^{27}_0 = \{4,5\}$\\[2pt] $\CN^{27}_1 = \{1,2,3,6,7\}$} };
		\end{scope}
		\begin{scope}[nodes = {right=11pt, shift={(-0.1cm,0.3cm)}}]
			\node at (1) {\tiny \makecell[l] {${z_{\text{LP}}} = 30.3$\\[2pt] $\CN^1_0 = \CN^1_1 = \varnothing$}};
			\node at (3) {\tiny \makecell[l] {$z_{\text{LP}} = 44.2$\\[2pt]	$\CN^3_0 = \varnothing$\\[2pt] $\CN^3_1 = \{4,5\}$}	};
			\node at (5) {\tiny \makecell[l] {$z_{\text{LP}} = 50$\\[2pt] $\CN^5_0 = \{4\}$\\[2pt]	$\CN^5_1 = \{5\}$}	};
			\node at (11) {\tiny \makecell[l] {$z_{\text{LP}} = 50$\\[2pt] $\CN^{11}_0 = \{4\}$\\[2pt]	$\CN^{11}_1 = \{5,7\}$}	};
			\node at (17) {\tiny \makecell[l] {$z_{\text{LP}} = 56$\\[2pt] $\CN^{17}_0 = \{4,5\}$\\[2pt]	$\CN^{17}_1 = \{3,6,7\}$}	};
		\end{scope}
		\begin{scope}[nodes = {right=11pt, shift={(-0.1cm,0.0cm)}}]
			\node at (10) {\tiny \makecell[l] {$z_{\text{LP}} = 92$\\[2pt] $\CN^{10}_0 = \{4,7\}$\\[2pt]  $\CN^{10}_1 = \{5\}$} };
		\end{scope}
		\begin{scope}[nodes = {right=11pt, shift={(-0.5cm,0.8cm)}}]
			\node at (9) {\tiny \makecell[l] {$z_{\text{LP}} = 56$\\[2pt] $\CN^9_0 = \{4,5\}$\\[2pt]  $\CN^9_1 = \{3\}$} };
		\end{scope}
		\begin{scope}[nodes = {left=11pt, shift={(0.1cm,0.3cm)}}]
			\node at (2) {\tiny \makecell[l] {$z_{\text{LP}} = 50$\\[2pt] $\CN^2_0 = \{4\}$\\[2pt] $\CN^{2}_{1} = \varnothing$}	};
			\node at (4) {\tiny \makecell[l] {$z_{\text{LP}} = 56$\\[2pt] $\CN^4_0 = \{4,5\}$\\[2pt] $\CN^4_1 = \varnothing$} };
			\node at (14) {\tiny \makecell[l] {$z_{\text{LP}} = 56.3$\\[2pt] $\CN^{14}_0 = \{6\}$\\[2pt]	$\CN^{14}_1 = \{4,5\}$} };
		\end{scope}
		\draw (1) -- (2) node [midway,above,sloped,text=blue] {$z_4=0$};
		\draw (1) -- (3) node [midway, above,sloped,text=red] {$z_4=1$};
		\draw (2) -- (4) node [midway,above,sloped,text=blue] {$z_5=0$};
		\draw (2) -- (5) node [midway,above,sloped,text=red] {$z_5=1$};
		\draw (3) -- (14) node [midway,above,sloped,text=blue] {$z_6=0$};
		\draw (3) -- (15) node [midway,above,sloped,text=red] {$z_6=1$};
		\draw (4) -- (8) node [midway,above,sloped,text=blue] {$z_3=0$};
		\draw (4) -- (9) node [midway,above,sloped,text=red] {$z_3=1$};
		\draw (5) -- (10) node [midway,above,sloped,text=blue] {$z_7=0$};
		\draw (5) -- (11) node [midway,above,sloped,text=red] {$z_7=1$};
		\draw (14) -- (22) node [midway,above,sloped,text=blue] {$z_2=0$};
		\draw (14) -- (23) node [midway,above,sloped,text=red] {$z_2=1$};
		\draw (9) -- (16) node [midway,above,sloped,text=blue] {$z_6=0$};
		\draw (9) -- (17) node [midway,above,sloped,text=red] {$z_6=1$};
		\draw (11) -- (18) node [midway,above,sloped,text=blue] {$z_2=0$};
		\draw (11) -- (19) node [midway,above,sloped,text=red] {$z_2=1$};
		\draw (17) -- (26) node [midway,above,sloped,text=blue] {$z_2=0$};
		\draw (17) -- (27) node [midway,above,sloped,text=red] {$z_2=1$};
	\end{tikzpicture}
	\caption{The \BnC search tree of the problem in \cref{example1} with the proposed dominance-based branching applied.}
	\label{figure-newbranch}
\end{figure}
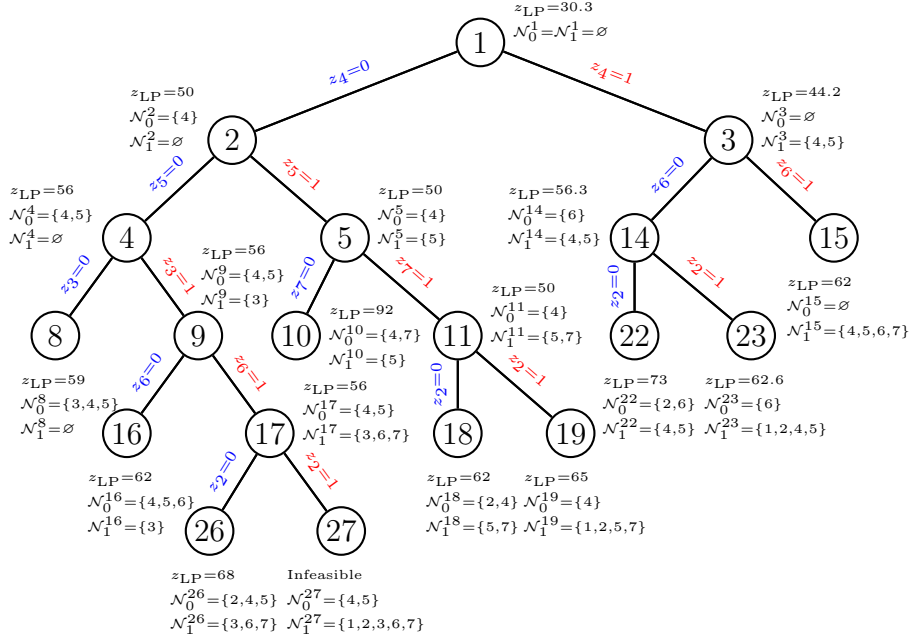

\section{Overlap-oriented node pruning and variable fixing}
\label{sect:consistency branch}

The dominance-based branching creates two subproblems \eqref{strongleftbranch} and \eqref{strongrightbranch} (with more fixed variables)  using the dominance relations in \eqref{newprecedenceconstraints}. 
Such relations are derived from constraints \eqref{milp-consvz}--\eqref{milp-consbound} and the if-then constraints \eqref{cons-ifthen} at the root node of the search tree (that removes the overlaps).
It is possible, however, that at other nodes in the search tree, with the additional fixings of variables to $0$ and $1$, more reductions, i.e., node pruning and variable fixing, can be derived by exploiting the overlap information.
To further enhance the dominance-based branching, in this section, we shall perform {overlap-oriented node pruning and variable fixing} at each 
 node $(\CN_0, \CN_1)$ by considering the set
\begin{equation}\label{FCdef}
	\FC(\CN_0,\CN_1):=\left\{(v,z) \, : \, \eqref{milp-consvz}-\eqref{milp-consbound},~\eqref{cons-ifthen},~ z_i = 0,~\forall~i \in \CN_0,~z_i = 1,~\forall~i \in \CN_1\right\}.
\end{equation}
Here $\CN_0$ and $\CN_1$ are the index sets of variables $z$ fixed to $0$ and $1$, respectively, at the current node.
This set is a variant of the joint mixing set with a knapsack constraint \cite{Kucukyavuz2012,Zhao2017}  that additionally includes the if-then constraints \eqref{cons-ifthen} and the variable fixings at the current node.
Specifically, we can 
\begin{itemize}
	\item [(R1)] prune node $(\CN_0, \CN_1)$ when $\FC(\CN_0,\CN_1)= \varnothing$ is detected; or
	\item [(R2)] fix variables in $\CR_0$ and $\CR_1$ to $0$ and $1$ at node $(\CN_0, \CN_1)$ when $\FC(\CN_0,\CN_1) \neq \varnothing$, 
	where  $\CR_0$ and $\CR_1$ are disjoint subsets of $\CN_f:=[n]\backslash (\CN_0 \cup \CN_1)$ satisfying
	\begin{equation}\label{cond}
		\FC(\CN_0,\CN_1)=\FC(\CN_0\cup \CR_0,\CN_1\cup \CR_1).
	\end{equation}
\end{itemize}
Note that the dominance relations in \eqref{newprecedenceconstraints} can be derived from $\FC(\varnothing, \varnothing)$, and thus the reductions by (R1) and (R2) include the reductions by the dominance relations in \eqref{newprecedenceconstraints} (in the dominance-based branching).
In the following, we shall present exact and approximation approaches to detect when $\FC(\CN_0,\CN_1) = \varnothing$ holds, 
or find the largest subsets $\CR_0$ and $\CR_1$ of $\CN_f$ satisfying \eqref{cond}.

\subsection{The exact approach}
\label{subsect:consistency analysis}

We first consider the overlap-oriented  node pruning in (R1), \ie identify the condition under which $\FC(\CN_0, \CN_1) = \varnothing$ holds.
Let $(v,z) \in \FC(\CN_0, \CN_1)$. 
For $i \in \CN_0$, from \eqref{milp-consvz} and $z_i=0$, we obtain $v \geq \xi^i$, and thus $v \geq \xi^{\CN_0}:= \max_{i \in \CN_0}\{\xi^i \}$ (where the max is taken component-wise and $\xi^{\CN_0} = \boldsymbol{0}$ if $\CN_0 =\varnothing$); and 
for $j \in \CN_1$, from the if-then constraint $z_j =1\Rightarrow v \ngeq \xi^j$ in \eqref{cons-ifthen} and $z_j =1$, we obtain $v \ngeq \xi^j$, or equivalently, $\bigvee_{k \in [m]} \left(v_k < \xi^j_k \right) =1$.
Therefore, 
\begin{align}
	1= \bigwedge_{j \in \CN_1} \bigvee_{k \in [m]} \left(v_k < \xi^j_k \right)  =\bigwedge_{j \in \CN_1} \bigvee_{k \in [m],\,\xi_k^j > \xi_k^{\CN_0}} \left(v_k < \xi^j_k \right)
	, 	\label{cons-dnf1}
\end{align}
where the second equality follows from $v \geq \xi^{\CN_0}$.
Letting $\M_j = \left\{ k \in [m] \, : \, \xi^j_k > \xi_k^{\CN_0} \right\}$ and $\CL = \prod_{j \in \CN_1} \M_j $, then \eqref{cons-dnf1} can be rewritten as
\begin{equation}	\label{cons-dnf}
	\bigvee_{\ell\in\CL} d_{\ell}(v)=1,
\end{equation}
where
\begin{align}
	& d_\ell(v) = \bigwedge_{j \in \CN_1} 
	\left(v_{\ell_j} < \xi^j_{\ell_j} \right).	\label{dlv}
\end{align}
Notice that $\M_j$, $\CL$, and $d_\ell(v)$ indeed depend on $\CN_0$ or $\CN_1$ (or both of them) but we omit this dependence for notation convenience.
The following theorem provides a necessary and sufficient condition for $\FC(\CN_0, \CN_1) \neq \varnothing$.

\begin{Theorem}
	\label{ConditionOneandTwo}
	$\FC(\CN_0, \CN_1) \neq \varnothing$ holds if and only if there exists some $\ell \in \CL$ such that 
	\begin{equation}
		\label{tempkap}
		\sum_{i \in \CN_\ell\cup \CN_1} p_i \leq \epsilon,~\text{where}~
		\CN_\ell = \left\{i\in\CN_f \stt d_{\ell}(\xi^i) = 0 \right\}.
	\end{equation}
\end{Theorem}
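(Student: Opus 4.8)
The plan is to reduce the statement to a purely geometric question about $v$ by first observing that, in any point $(v,z)\in\FC(\CN_0,\CN_1)$, the vector $z$ is completely determined by $v$. Indeed, constraints \eqref{milp-consvz} together with the if-then constraints \eqref{cons-ifthen} force $z_i=0$ whenever $v\geq\xi^i$ and $z_i=1$ whenever $v\ngeq\xi^i$, \ie $z_i=\chi(v\ngeq\xi^i)$. Hence $\FC(\CN_0,\CN_1)\neq\varnothing$ is equivalent to the existence of some $v\in\R_+^m$ meeting three conditions: (a) $v\geq\xi^{\CN_0}$ (coming from the fixings $z_i=0$, $i\in\CN_0$, via \eqref{milp-consvz}); (b) $v\ngeq\xi^j$ for all $j\in\CN_1$ (coming from the fixings $z_j=1$, $j\in\CN_1$, via \eqref{cons-ifthen}); and (c) the knapsack bound, which under the above determination of $z$ reads $\sum_{i\,:\,v\ngeq\xi^i}p_i\leq\epsilon$. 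I would also record that, by the CNF-to-DNF manipulation already performed in \eqref{cons-dnf1}--\eqref{cons-dnf}, conditions (a) and (b) together are equivalent to the existence of some $\ell\in\CL$ with $d_\ell(v)=1$.

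For the ``only if'' direction I would start from a feasible $(v,z)$, invoke the equivalence above to select $\ell\in\CL$ with $d_\ell(v)=1$, and then show that the violated set $\{i\,:\,v\ngeq\xi^i\}$ contains $\CN_\ell\cup\CN_1$. Here $\CN_1$ lies in the violated set by (b); and for any $i\in\CN_\ell$ the defining property $d_\ell(\xi^i)=0$ supplies some $j\in\CN_1$ with $\xi^i_{\ell_j}\geq\xi^j_{\ell_j}$, while $d_\ell(v)=1$ gives $v_{\ell_j}<\xi^j_{\ell_j}\leq\xi^i_{\ell_j}$, so $v\ngeq\xi^i$. Since $\CN_\ell\subseteq\CN_f$ is disjoint from $\CN_1$ and all $p_i\geq0$, condition (c) then yields $\sum_{i\in\CN_\ell\cup\CN_1}p_i\leq\sum_{i\,:\,v\ngeq\xi^i}p_i\leq\epsilon$, which is the claimed inequality.

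The ``if'' direction is the heart of the argument and the step I expect to require the most care. Given $\ell\in\CL$ satisfying the knapsack bound, I would build an explicit witness $v$: on the coordinate set $K=\{\ell_j:j\in\CN_1\}$ put $v_k$ just below the threshold $\theta_k:=\min\{\xi^j_k:j\in\CN_1,\,\ell_j=k\}$, and on the remaining coordinates set $v_k$ to a value dominating every $\xi^i_k$. Feasibility of (a) exploits that $\ell_j\in\M_j$ forces $\theta_k>\xi^{\CN_0}_k$, leaving room to keep $v_k\geq\xi^{\CN_0}_k$, while $d_\ell(v)=1$ (hence (b)) is immediate from $v_{\ell_j}<\theta_{\ell_j}\leq\xi^j_{\ell_j}$; these also show the induced $z$ respects the fixings $\CN_0,\CN_1$. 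The delicate point is to verify that, once the gap parameter is taken smaller than every positive difference $\theta_k-\xi^i_k$ and every gap $\theta_k-\xi^{\CN_0}_k$ (finitely many quantities), the set of violated \emph{free} scenarios equals exactly $\CN_\ell$: a free scenario $i$ is violated iff some $k\in K$ has $\xi^i_k\geq\theta_k$, which unwinds precisely to $d_\ell(\xi^i)=0$. The violated set is then $\CN_\ell\cup\CN_1$, its probability is $\leq\epsilon$ by hypothesis, and the induced $z$ places $(v,z)$ in $\FC(\CN_0,\CN_1)$. Thus the main obstacle is bookkeeping the strict inequalities in $d_\ell$ and the knife-edge cases $\xi^i_k=\theta_k$ through a small-perturbation argument, rather than any conceptual difficulty.
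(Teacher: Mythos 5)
Your proof is correct and follows essentially the same route as the paper's: the ``only if'' direction selects $\ell \in \CL$ with $d_\ell(v)=1$ and shows every $i \in \CN_\ell$ is violated (your coordinate-picking argument is just the paper's monotonicity of $d_\ell$ made explicit), and your ``if'' direction builds exactly the paper's witness $\hat{v}$ (thresholds $\theta_k - \delta$ on $K = \M'$, componentwise maxima elsewhere) with the same small-$\delta$ bookkeeping. Your preliminary observation that $z$ is forced to equal $\chi(v \ngeq \xi^i)$ inside $\FC(\CN_0,\CN_1)$ is a clean repackaging that the paper leaves implicit, but it does not change the substance of the argument.
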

\begin{proof}
	\emph{Necessity}. 
	Suppose that $(v, z) \in \FC(\CN_0, \CN_1) \neq \varnothing$.
	By \eqref{cons-dnf}, there must exist some $\ell \in \CL$ such that $d_{\ell}(v) = 1$. 
	For $i \in \CN_\ell$, we have $d_{\ell}(\xi^i)=0$, which together with $d_{\ell}(v) = 1$ and the fact that $d_\ell(v^1) \leq d_\ell(v^2)$ holds for any $v^1, v^2 \in \R^m_+$ with $v^1 \geq v^2$, implies $v \ngeq \xi^i$, and thus $z_i=1$.
	Combining with $z_j =1 $ for $j \in \CN_1$ and \eqref{milp-consknap}, this indicates 
	$$\sum_{i \in \CN_\ell\cup \CN_1} p_i =\sum_{i \in \CN_\ell\cup \CN_1} p_iz_i  \leq \sum_{i=1}^n p_i z_i \leq \epsilon.$$

	\emph{Sufficiency}. Suppose that \eqref{tempkap} holds for some $\ell \in \CL$.
	We define a point $(\hat{v}, \hat{z}) \in \R^m_+ \times \{0,1\}^n $ as follows:
	\begin{align}
		\label{defvhat}
		& \hat{v}_k =\left\{\begin{array}{ll}
			\min\limits_{\ell_j = k,~j \in \CN_1}\{\xi_{\ell_j}^j\}-\delta, &\ \text{if}~k \in \M';\\[10pt]
			\max\limits_{i\in[n]}\left\{\xi^i_k\right\},&\ \text{otherwise},
		\end{array}\right.\forall~k \in [m],  \\
		\label{defzhat}
		& \hat{z}_i = \left\{\begin{array}{ll}
			1, & \ \text{if}~i \in \CN_1\cup \CN_\ell;\\[5pt]
			0, & \ \text{otherwise},
		\end{array}\right.~ \forall~i\in [n],
	\end{align}
	where $\delta>0$ is a sufficiently small value and $\M' = \{ \ell_j \, : \, j \in \CN_1 \}$.
	From $\ell  \in \CL$, we have $\ell_j \in \M_j$ and $\xi^j_{\ell_j} > \xi^{\CN_0}_{\ell_j} \geq 0$ for all $j \in \CN_1$. By \eqref{defvhat} and the fact that $\delta>0$ is sufficiently small, we can derive  $\hat{v} \geq \xi^{\CN_0} \geq 0 $.
	Together with \eqref{tempkap} and \eqref{defzhat}, this implies that \eqref{milp-consknap} and \eqref{milp-consbound} hold at point $(\hat{v}, \hat{z})$.
	In the following, we shall prove $(\hat{v}, \hat{z}) \in \FC(\CN_0, \CN_1)$ by showing that constraints \eqref{milp-consvz} and \eqref{cons-ifthen} hold at $(\hat{v}, \hat{z}) $.

	To prove that \eqref{milp-consvz} holds at $(\hat{v}, \hat{z}) $, it suffices to show $\hat{v} \geq \xi^i $ for every $i \in \CN_0 \cup (\CN_f\backslash\CN_\ell)$.
	For $i \in \CN_0$, $\hat{v} \geq \xi^i$ follows from $\hat{v} \geq \xi^{\CN_0}$.
	Now consider the case $i \in \CN_f\backslash\CN_\ell$.
	First, from the definition of $\hat{v}$ in \eqref{defvhat},  $\hat{v}_k \geq \xi^i_k$ holds for all $k \in [m]\backslash \M'$.
	Second, by the definition of $\CN_\ell$ in \eqref{tempkap} and $i \in \CN_f \backslash \CN_\ell$, we obtain  $d_{\ell}(\xi^i) =\bigwedge_{j \in \CN_1} 
	\left(\xi^i_{\ell_j} < \xi^j_{\ell_j} \right)=1 $, 
	which, together with the fact that $\delta>0$ is a sufficiently small value, 
	implies $\xi^i_{\ell_j} \leq \xi^j_{\ell_j}-\delta$  for all $j \in \CN_1$.
	Therefore, for $k \in \M'$, it also follows $\xi_k^i \leq \min\limits_{\ell_j = k,~j \in \CN_1}\{\xi_{\ell_j}^j\}-\delta = \hat{v}_k$.

	Finally, we show that \eqref{cons-ifthen} holds at point $(\hat{v}, \hat{z})$, which can be done 	by proving $\hat{v} \ngeq \xi^i$ for all $i \in  \CN_1 \cup \CN_\ell$.
	From the definition of $\hat{v}$ in \eqref{defvhat} and $\delta>0$, it is simple to see $d_{\ell}(\hat{v})=1$ and $\hat{v} \ngeq \xi^i$ for all $i \in \CN_1$.
	For $i \in \CN_\ell$, we have $d_{\ell}(\xi^i)  = 0$, which together with  $d_{\ell}(\hat{v})=1$ and
	the fact that $d_\ell(v^1) \leq d_\ell(v^2)$ holds for any $v^1, v^2 \in \R^m_+$ with $v^1 \geq v^2$, implies $\hat{v} \ngeq \xi^i $.
\end{proof}

\cref{ConditionOneandTwo} enables to determine whether $\FC(\CN_0, \CN_1) \neq \varnothing$ by solving an \MILP problem. 
Specifically,  
for $j \in \CN_1$ and $i \in \CN_f$, let $\M_{ji}=\{ k \in \M_j\, : \, \xi_k^j \leq \xi_k^i \}$;
for $j \in \CN_1$ and $k \in \M_j$, let $w_{jk} \in \{0,1\}$ denote whether $v_k < \xi^j_k$ is included in \eqref{dlv}; 
and for $i \in \CN_f$, let $z_i \in \{0,1\}$ denote whether $i \in \CN_\ell$ holds. 
Then it follows  from \cref{ConditionOneandTwo} that
\begin{Corollary}
	\label{coroC}
	$(v,z) \in \FC(\CN_0, \CN_1)$ holds if and only if there exists a vector $w$ for which $(w,z)$ satisfies
	\begin{align}
		& \sum_{k \in \M_j} w_{jk} =1, ~\forall~j \in \CN_1,\label{eq:1}\\
		& \sum_{k \in \M_{ji}}w_{jk} \leq z_{i},~\forall~j\in \CN_1, ~i \in \CN_f,~\label{eq:2}\\
		& \sum_{i \in \CN_f} p_i z_i \leq \epsilon - \sum_{j \in \CN_1}p_j,\label{eq:3}\\
		& w_{jk} \in \{0,1\},~\forall~j \in \CN_1, ~k \in \M_j,~z_{i} \in \{0,1\},~\forall~i \in \CN_f.\label{eq:4}
	\end{align}
\end{Corollary}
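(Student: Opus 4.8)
The plan is to recognize \eqref{eq:1}--\eqref{eq:4} as a mixed-integer linear transcription of the condition in \cref{ConditionOneandTwo}, so that the statement reduces to a bookkeeping translation plus the construction already carried out there. The dictionary I would fix at the outset is: a $0/1$ vector $w$ obeying \eqref{eq:1} selects, for each $j \in \CN_1$, the unique index $\ell_j \in \M_j$ with $w_{j\ell_j}=1$, hence encodes an element $\ell \in \CL$; the variable $z_i$ for $i \in \CN_f$ is the indicator of membership $i \in \CN_\ell$; constraint \eqref{eq:2} links the two; and \eqref{eq:3} is precisely the knapsack condition \eqref{tempkap} after transferring the fixed mass $\sum_{j \in \CN_1} p_j$ to the right-hand side. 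I would also record up front the structural fact that on $\FC(\CN_0,\CN_1)$ the big-$M$ constraints \eqref{milp-consvz} and the if-then constraints \eqref{cons-ifthen} together determine $z$ from $v$ via $z_i = 1 \Leftrightarrow v \ngeq \xi^i$; this is what legitimizes carrying the same symbol $z$ across both sides of the claimed equivalence.

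For the direction from $\FC$ to the system, I would take a point $(v,z) \in \FC(\CN_0,\CN_1)$, apply \eqref{cons-dnf} to obtain some $\ell \in \CL$ with $d_\ell(v)=1$, and set $w_{jk}=1$ exactly when $k = \ell_j$. Constraint \eqref{eq:1} is then immediate, and \eqref{eq:3} is just \eqref{milp-consknap} evaluated under the fixings. The one nontrivial verification is \eqref{eq:2}: if $\sum_{k \in \M_{ji}} w_{jk} = 1$ then $\ell_j \in \M_{ji}$, so $\xi^j_{\ell_j} \le \xi^i_{\ell_j}$, while $d_\ell(v)=1$ gives $v_{\ell_j} < \xi^j_{\ell_j}$; combining these yields $v_{\ell_j} < \xi^i_{\ell_j}$, whence $v \ngeq \xi^i$, and the structural fact forces $z_i = 1$, so the inequality in \eqref{eq:2} holds.

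For the converse I would recover $\ell$ from a feasible $w$ through \eqref{eq:1}, form $\CN_\ell = \{ i \in \CN_f \stt d_\ell(\xi^i)=0\}$, and note that \eqref{eq:2} forces $z_i = 1$ for all $i \in \CN_\ell$ while \eqref{eq:3} is exactly \eqref{tempkap}. At this point I would invoke, essentially verbatim, the point $(\hat v,\hat z)$ constructed in the sufficiency part of the proof of \cref{ConditionOneandTwo}: the selection $\ell$ fixes the coordinate set $\M' = \{\ell_j \stt j \in \CN_1\}$ on which $\hat v$ is defined just below $\min_{\ell_j = k} \xi^j_{\ell_j}$, and that argument already certifies $\hat v \ge \xi^{\CN_0}$, the big-$M$ constraints \eqref{milp-consvz}, and the if-then constraints \eqref{cons-ifthen}, i.e.\ membership in $\FC(\CN_0,\CN_1)$.

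The step I expect to be most delicate is making the linearization \eqref{eq:2} faithful in both readings at once. Transcribing the implication ``$\ell_j \in \M_{ji} \Rightarrow i \in \CN_\ell$'' as the inequality $\sum_{k \in \M_{ji}} w_{jk} \le z_i$ (rather than an equality) is what lets $z$ double as the decision variable of \eqref{FCdef} and as the $\CN_\ell$-indicator, and I would check carefully that this inequality direction is compatible with the $v$-to-$z$ determinacy recorded above, so that the forward map does not over-constrain $z$ and the reverse construction still certifies every if-then constraint $z_i = 1 \Rightarrow v \ngeq \xi^i$. Once this correspondence is pinned down, the remainder is the routine big-$M$ and knapsack bookkeeping already supplied by \cref{ConditionOneandTwo}.
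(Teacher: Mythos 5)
Your route is the same as the paper's: the paper gives no separate proof of \cref{coroC} beyond fixing exactly the dictionary you describe ($w$ encodes the selection $\ell \in \CL$ via \eqref{eq:1}, the $z_i$ with $i \in \CN_f$ play the role of indicators of $\CN_\ell$) and then invoking \cref{ConditionOneandTwo}; your forward direction is correct and complete as written. The problem is the step you defer to the end. The ``careful check'' that the reverse construction ``still certifies every if-then constraint $z_i = 1 \Rightarrow v \ngeq \xi^i$'' for the \emph{given} $z$ actually fails, and no amount of care will repair it. Since \eqref{eq:2} is only an inequality, a feasible $(w,z)$ may have $z_i = 1$ for some $i \in \CN_f$ that is not forced, i.e., $i \notin \CN_\ell$. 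The point $(\hat v,\hat z)$ built in the sufficiency half of \cref{ConditionOneandTwo} then has $\hat v \geq \xi^i$ and $\hat z_i = 0 \neq z_i$, so it witnesses $\FC(\CN_0,\CN_1) \neq \varnothing$ but not membership for the given $z$; and in general no other $v$ can be paired with that $z$ either. Concretely, take $m=1$, $\CN_0=\varnothing$, $\CN_1=\{1\}$, $\CN_f=\{2,3\}$, $\xi^1=2$, $\xi^2=0.5$, $\xi^3=1$, $p_1=p_2=p_3=0.1$, $\epsilon=0.5$. Then $\M_1=\{1\}$ and $\M_{12}=\M_{13}=\varnothing$, so $(w_{11},z_2,z_3)=(1,1,0)$ satisfies \eqref{eq:1}--\eqref{eq:4}; yet any $(v,z) \in \FC(\varnothing,\{1\})$ with $z_2=1$ and $z_3=0$ would need $v \geq \xi^3 = 1$ by \eqref{milp-consvz} and $v < \xi^2 = 0.5$ by \eqref{cons-ifthen}, which is impossible.

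So the biconditional with the same $z$ on both sides --- which is how the corollary is literally phrased and what you set out to prove --- is false. What your two directions actually assemble into is the existential statement: $\FC(\CN_0,\CN_1) \neq \varnothing$ if and only if \eqref{eq:1}--\eqref{eq:4} has a feasible solution. That weaker statement is also all the paper uses: in \eqref{MIP} one minimizes $\sum_{i \in \CN_f} p_i z_i$ subject to \eqref{eq:1}, \eqref{eq:2}, \eqref{eq:4}, and at an optimal solution $z_i = 1$ holds exactly when it is forced by \eqref{eq:2}, i.e., exactly on $\CN_\ell$; hence the optimal $z$ coincides with $\hat z$ on $\CN_f$ and does extend (by $\hat v$) to a point of $\FC(\CN_0,\CN_1)$. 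The fix to your write-up is therefore not to pin down the same-$z$ correspondence but to abandon it: state and prove the equivalence existentially (or restrict the right-hand side to minimal feasible $z$), after which your forward map and your appeal to the sufficiency construction of \cref{ConditionOneandTwo} constitute a complete proof.
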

\noindent Therefore, to determine whether $\FC(\CN_0, \CN_1) \neq  \varnothing$, we can solve the following \MILP problem 
\begin{equation}\label{MIP}
	o = \min_{w,\,z} \left\{ \sum_{i \in \CN_f } p_i z_i \, : \, \eqref{eq:1},~\eqref{eq:2},~\eqref{eq:4}\right\}.
\end{equation} 
If $o \leq \epsilon - \sum_{j \in \CN_1}p_j$, then $\FC(\CN_0, \CN_1)\neq \varnothing$ and the optimal solution $z$ can define a feasible solution $(v,z)$ of $\FC(\CN_0, \CN_1)$; otherwise, $\FC(\CN_0, \CN_1) = \varnothing$.

Next, we attempt to derive variable fixings for the case $\FC(\CN_0, \CN_1)\neq \varnothing$. 
Let $(v,z) \in \FC(\CN_0, \CN_1)$.
Observe that $z_i=0$ (respectively, $z_i=1$) holds for all $(v,z) \in \FC(\CN_0, \CN_1)$ if and only if $\FC(\CN_0, \CN_1\cup\{i\}) = \varnothing$ (respectively, $\FC(\CN_0\cup\{i\}, \CN_1) = \varnothing$) holds for $i \in \CN_f$.
Hence, the largest subsets $\CR_0$ and $\CR_1$ satisfying \eqref{cond} can be written as 
\begin{equation*}
	\CR_0 = \{ i \in \CN_f\, : \, \FC(\CN_0, \CN_1\cup\{i\}) = \varnothing  \} ~\text{and}~\CR_1 = \{ i \in \CN_f\, : \, \FC(\CN_0\cup\{i\}, \CN_1) = \varnothing  \}.
\end{equation*}
As a result, determining the largest subsets $\CR_0$ and $\CR_1$ satisfying \eqref{cond} 
can be done by solving $2|\CN_f|$ \MILPs of the form \eqref{MIP}.
Notice that for $i \in \CN_f\backslash (\CR_0 \cup \CR_1)$, both $\FC(\CN_0\cup \CR_0 \cup \{i\},\CN_1\cup \CR_1)\neq \varnothing$ and $\FC(\CN_0\cup \CR_0,\CN_1\cup \CR_1\cup\{i\})\neq \varnothing$ must hold. 
Therefore,
\begin{Remark}\label{re:nofixing}
	If the overlap-oriented variable fixing is performed to find the largest subsets $\CR_0$ and $\CR_1$ satisfying \eqref{cond} at all nodes of the search tree, then no overlap-oriented node pruning can be performed. 
	That is, for any node $(\CN_0, \CN_1)$ in the search tree, it must follow $\FC(\CN_0, \CN_1) \neq \varnothing$.
\end{Remark}
\begin{Example}[continued]
	Applying  the dominance-based branching \rev{with the} overlap-oriented variable fixing to the problem in \cref{example1}, we obtain the search tree in \cref{figure-pra}.
	All variables fixed to $0$ or $1$ by overlap-oriented variable fixing are underlined in \cref{figure-pra}.
	
	Let us perform variable fixing at node $5$ where, at the beginning, $\CN_0 := \CB_0 = \{4\}$ and $\CN_1 := \CB_1 = \{5\}$.
	By simple computations, we have $\M_5 = \{1\}$, $\M_{51}=\M_{52}=\M_{53}=\varnothing$, and $\M_{56}=\M_{57}=\{1\}$.
	Hence, problem \eqref{MIP} reduces to 
	\begin{equation*}\small
		\min\left\{ \frac{1}{7}\!\!\sum_{i\in [7]\backslash \{4,5\}} \!\!\!\!\!\! z_i : w_{51} = 1, w_{51} \leq z_6, w_{51} \leq z_7, w_{51} \in \{0,1\}, z_i \in \{0,1\}, \forall\,i \in [7]\backslash\{4,5\}\right\}.
	\end{equation*}
	As the optimal value of the above problem is $\frac{2}{7} \leq \epsilon - \sum_{j \in \CN_1} p_j = \frac{3}{7}$, from \cref{coroC} $\FC(\{4\}, \{5\}) \neq \varnothing$ holds.
	Similarly, we can show that $\FC(\{4,6\},\{5\})=\FC(\{4,7\},\{5\}) = \FC(\{4\},\{2,5\}) =  \FC(\{4\},\{3,5\}) =\varnothing$, $\FC(\{1, 4\},\{5\}) \neq \varnothing$, and $\FC(\{4\},\{1,5\}) \neq \varnothing$. Thus, we can fix $z_2=z_3=0$ and $z_6 = z_7 = 1$ at node $5$.
	\begin{figure}[!h]
		\everymath{\scriptscriptstyle}
		\centering
			\begin{tikzpicture}
			[level distance=12mm,thick,
			level 1/.style={sibling distance=50mm},
			level 2/.style={sibling distance=25mm},
			level 3/.style={sibling distance=20mm}]
			\node[vertex](1) {1}
			child{node[vertex](2) {2}
				child{node[vertex](4) {4}
					child{node[vertex](8) {8}}
					child{node[vertex](9) {9}}}
				child{node[vertex](5) {5}}}
			child{node[vertex](3) {3}
				child{node[vertex](14) {14}}
				child{node[vertex](15) {15}}};
			\begin{scope}[nodes = {below = 9pt,shift={(0.1cm,0cm)}}]
				\node at (5) {\tiny \makecell[l] {$z_{\text{LP}} = 62$\\[2pt] $\CN^5_0 = \{\underline{2},\underline{3},4\}$\\[2pt]	$\CN^5_1 = \{5,\underline{6},\underline{7}\}$}	};
				\node at (8) {\tiny \makecell[l] {$z_{\text{LP}}=59$\\[2pt] $\CN^{8}_0 = \{3,4,5\}$\\[2pt] $\CN^{8}_1 = \varnothing$} };
				\node at (9) {\tiny \makecell[l] {$z_{\text{LP}} = 62$\\[2pt] $\CN^9_0 = \{4,5,\underline{6}\}$\\[2pt]  $\CN^9_1 = \{\underline{1},\underline{2},3\}$} };
				\node at (14) {\tiny \makecell[l] {$z_{\text{LP}} = 73$\\[2pt]	$\CN^{14}_0 = \{2, 6\}$\\[2pt]	$\CN^{14}_1 = \{\underline{3},4,5\}$} };
				\node at (15) {\tiny \makecell[l] {$z_{\text{LP}} = 62$\\[2pt] $\CN^{15}_0 = \{\underline{1},2,\underline{3}\}$\\[2pt] $\CN^{15}_1 = \{4,5,6,7\}$} };
			\end{scope}
			\begin{scope}[nodes = {right=11pt, shift={(0.0cm,0.3cm)}}]
				\node at (1) {\tiny \makecell[l] {${z_{\text{LP}}} = 30.3$\\[2pt] $\CN^1_0 = \CN^1_1 = \varnothing$}};
				\node at (3) {\tiny \makecell[l] {$z_{\text{LP}} = 49$\\[2pt]	$\CN^3_0 = \{\underline{2}\}$\\[2pt] $\CN^3_1 = \{4,5\}$}	};
			\end{scope}
			\begin{scope}[nodes = {left=11pt, shift={(0.1cm,0.3cm)}}]
				\node at (4) {\tiny \makecell[l] {$z_{\text{LP}} = 56$\\[2pt] $\CN^4_0 = \{4,5\}$\\[2pt] $\CN^4_1 = \varnothing$} };		
				\node at (2) {\tiny \makecell[l] {$z_{\text{LP}} = 50$\\[2pt] $\CN^2_0 = \{4\}$\\[2pt] $\CN^{2}_{1} = \varnothing$}	};
			\end{scope}
			\draw (1) -- (2) node [midway,above,sloped,text=blue] {$z_4=0$};
			\draw (1) -- (3) node [midway, above,sloped,text=red] {$z_4=1$};
			\draw (2) -- (4) node [midway,above,sloped,text=blue] {$z_5=0$};
			\draw (2) -- (5) node [midway,above,sloped,text=red] {$z_5=1$};
			\draw (3) -- (14) node [midway,above,sloped,text=blue] {$z_6=0$};
			\draw (3) -- (15) node [midway,above,sloped,text=red] {$z_6=1$};
			\draw (4) -- (8) node [midway,above,sloped,text=blue] {$z_3=0$};
			\draw (4) -- (9) node [midway,above,sloped,text=red] {$z_3=1$};
		\end{tikzpicture}
		\caption{The \BnC search tree of the problem in \cref{example1} with the proposed dominance-based branching \rev{with the} overlap-oriented variable fixing applied.}
		\label{figure-pra}
	\end{figure}
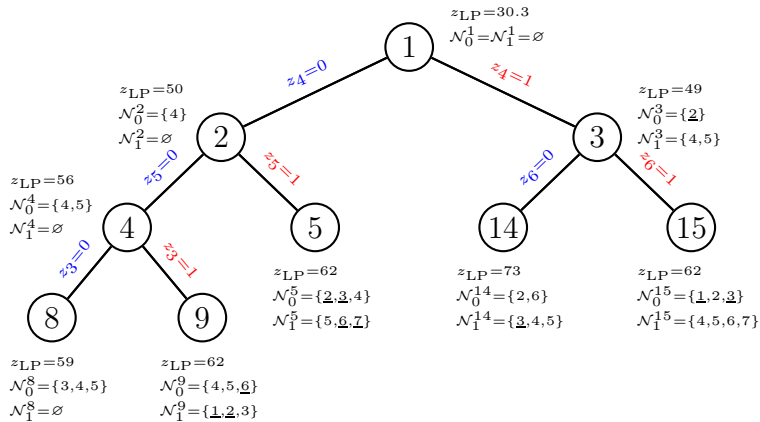
\end{Example}

The above example \rev{helps demonstrate} the advantage of the overlap-oriented variable fixing.
For instance, for node $5$ in the search tree in \cref{figure-pra}, the overlap-oriented variable fixing improves the \LP relaxation bound from $50$ to $62$ and avoids further branching.
Overall, applying the dominance-based branching \rev{with the} overlap-oriented variable fixing, only $9$ nodes are explored while applying the vanilla dominance-based branching, $19$ nodes are explored; see \cref{figure-newbranch,figure-pra}. 

To apply the overlap-oriented node pruning and variable fixing, we need to determine whether the system of the form \eqref{eq:1}--\eqref{eq:4} has a feasible solution (which can be done by solving \MILP problems of the form \eqref{MIP}).
Unfortunately, the following theorem shows that there does not exist a polynomial-time exact algorithm for solving the above problem.

\begin{Theorem}
	\label{nphard}
	Given $\CN_0, \CN_1 \subseteq[n]$, determining whether \eqref{eq:1}--\eqref{eq:4} has a feasible solution is strongly NP-complete.
\end{Theorem}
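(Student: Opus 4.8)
The plan is to prove membership in NP and then establish strong NP-hardness by reducing from \textsc{Vertex Cover}, which is strongly NP-complete (it carries no large numbers). Membership is immediate: a pair $(w,z)$ satisfying \eqref{eq:1}--\eqref{eq:4} is a polynomial-size certificate that can be verified in polynomial time, so the problem lies in NP.

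The key conceptual step is to read \cref{ConditionOneandTwo} combinatorially. By that theorem, $\FC(\CN_0,\CN_1)\neq\varnothing$ iff there is a selection $\ell\in\CL=\prod_{j\in\CN_1}\M_j$ (one coordinate $\ell_j\in\M_j$ per selector $j\in\CN_1$) whose induced ``covered'' set $\CN_\ell=\{i\in\CN_f : d_\ell(\xi^i)=0\}$ satisfies $\sum_{i\in\CN_\ell}p_i\le\epsilon-\sum_{j\in\CN_1}p_j$. Since $i\in\CN_\ell$ exactly when some $j\in\CN_1$ has $\ell_j\in\M_{ji}$, i.e. $\xi^j_{\ell_j}\le\xi^i_{\ell_j}$, the feasibility question becomes: choose one coordinate per selector so that the union of the indices each chosen coordinate ``triggers'' has total probability at most a prescribed budget. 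I would exploit that this is exactly a \emph{minimum-union} problem, and that minimum union encodes \textsc{Vertex Cover}.

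Given a graph $G=(V,E)$ and a target $t$, I would construct an instance with $\CN_0=\varnothing$, coordinates $[m]=V$, one selector per edge ($\CN_1=E$), and one free index per vertex ($\CN_f=V$), so $n=|E|+|V|$. For an edge $e=\{u,w\}$ let $\xi^e$ be the $0/1$ indicator of $\{u,w\}$, and for a vertex-index $v$ let $\xi^v$ be the indicator of $\{v\}$; set all $p_i$ equal to $1/n$ and $\epsilon=(t+|E|)/n$. Then $\M_e=\{u,w\}$ (as $\CN_0=\varnothing$ gives $\xi^{\CN_0}=\boldsymbol{0}$), and a straightforward threshold check shows that selector $e$ picking endpoint $k$ triggers vertex-index $v$ iff $v=k$. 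Consequently $\CN_\ell$ equals the set of endpoints picked across all edges, which is a vertex cover; conversely every vertex cover yields a valid selection. Minimizing $|\CN_\ell|$ therefore equals the minimum vertex-cover size, and with the chosen $\epsilon$ the budget condition $\sum_{i\in\CN_\ell}p_i\le\epsilon-\sum_{j\in\CN_1}p_j$ reduces to $|\CN_\ell|\le t$. Hence the constructed system is feasible iff $G$ has a vertex cover of size at most $t$.

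The main obstacle is the gadget design: I must choose the scenario values so that the threshold-defined sets $\M_j$ and $\M_{ji}$ reproduce exactly the edge--vertex incidence (each picked endpoint covering precisely itself, with no spurious index triggered), while keeping every numeric parameter polynomially bounded — here all $\xi$-entries lie in $\{0,1\}$, the probabilities are $1/n$, and $\epsilon$ has small numerator and denominator — so that \emph{strong} rather than merely weak NP-completeness follows. I would close by verifying $\epsilon\in(0,1)$ (under the trivial WLOG assumptions $E\neq\varnothing$ and $t<|V|$) and that the whole construction is polynomial-time, which together with NP membership gives strong NP-completeness.
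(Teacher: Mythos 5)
Your proof is correct and follows essentially the same route as the paper's: the paper reduces from Set Cover, and your Vertex Cover reduction builds the structurally identical gadget (unit vectors for the covering objects placed in $\CN_f$, $0/1$ incidence vectors for the objects to be covered placed in $\CN_1$, uniform probabilities $1/n$, and the cover budget folded into $\epsilon$), specialized to the case where every element lies in exactly two sets. Your explicit verification of NP membership and of $\epsilon\in(0,1)$ are small additions the paper leaves implicit, and the argument is valid since Vertex Cover, carrying no numerical data, is strongly NP-complete just as Set Cover is.
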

\begin{proof}
	We shall prove the strong NP-completeness of the problem of deciding whether \eqref{eq:1}--\eqref{eq:4} has a feasible solution
	by establishing a polynomial-time reduction from the strongly NP-hard problem: set covering (SC) problem \cite{Garey1978}.
	We first introduce the SC problem: 
	given $t$ subsets $\J_1, \J_2, \ldots, \J_t$ of $\J$ where $\J$ is a finite set of $r$ elements, 
	does there exist $\CS \subseteq [t]$ such that $|\CS| \leq B $ and $\cup_{i\in \CS}\J_i = \J$? 
	The SC problem is equivalent to deciding whether the following system
	\begin{equation}
		\label{Wdef}
		\sum_{i =1}^t z_i \leq B,~\sum_{i \in \I_j} z_i  \geq 1,  ~\forall~j \in \J,~z_i \in \{0,1\}, ~\forall~i \in [t],
	\end{equation}
	has a feasible solution $z$, where $\I_j = \{ i \in [t]\, : \, j  \in \J_i \}$.
	For notations purpose, we denote $\J=\{t+1, t+2, \ldots, t+r\}$.
	
	Given any instance of the SC problem, we construct an instance of \rev{the problem of determining} whether \eqref{eq:1}--\eqref{eq:4} has a feasible solution as follows:
	\begin{itemize}
		\item [(i)]  $n :=  t + r$, $m := t$, $p_i  := \frac{1}{t+r}$, $i \in [t+r]$,  $\epsilon := \frac{B+r}{t+r}$, $\CN_0:=\varnothing$, and $\CN_1:=\J =\{t+1, t+2, \ldots, t+r\}$;
		\item [(ii)] For each $i \in [t]$, we define $\xi^{i} := \boldsymbol{e}_{i} $ where $\boldsymbol{e}_{i}$ is the $i$-th unit vector of dimension $t$, and
		for each $j \in \{t+1, t+2, \ldots,t+r\}$, we define $\xi^j$ as follows:
		\begin{equation}
			\label{xiidef}
			\xi^{j}_k :=\left\{\begin{array}{ll}
				1,  & \ \text{if} ~k \in \I_{j} ; \\
				0, &\ \text{otherwise},
			\end{array}\right.~\forall~k \in [t].
		\end{equation}
	\end{itemize}
	By the definitions of $p_i$, $\epsilon$, $\CN_0$, and $\CN_1$, \eqref{eq:3} reduces to
	\begin{equation}
		\label{tmeq:3}
		\sum_{i =1}^t z_i \leq B.
	\end{equation}
	As $\CN_0 = \varnothing$, it follows $\xi^{\CN_0}=\boldsymbol{0}$.
	For $j \in \CN_1$, it follows from \eqref{xiidef} that $\M_{j} = \{ k \in [t] \, : \, \xi^{j}_k > 0 \} =\{ k \in [t] \, : \, \xi^{j}_k = 1 \}   = \I_j$.
	Thus \eqref{eq:1} and \eqref{eq:4} reduce to 
	\begin{align}
			& \sum_{i \in \I_j} w_{ji} =1,~\forall~j \in \CN_1,	\label{tmeq:1}\\
			& w_{ji} \in \{0,1\},~\forall~j \in \CN_1, ~i \in \I_j,~z_{i} \in \{0,1\},~\forall~i \in [t].	\label{tmeq:4}
	\end{align}
	For $j \in \CN_1$ and $i \in [t]$, we have 
	$\M_{ji} = \{ k \in \M_j \, : \, \xi_k^{j}\leq \xi_k^i \} = \{ k \in \I_j \, : \, \xi_k^{j}\leq \xi_k^i \} =  \{ k\in \I_j\, : \, 1\leq \xi_k^i \} $, which, together with $\xi^i = \boldsymbol{e}_i$, implies 
	\begin{equation*}
		\label{mjidef}
		\M_{ji} =\left\{\begin{array}{ll}
			\{i\},  & \ \text{if} ~i \in \I_j ; \\
			\varnothing, &\ \text{otherwise}.
		\end{array}\right.
	\end{equation*}
	Therefore, \eqref{eq:2} reduces to 
	\begin{equation}
			\label{tmeq:2}
		w_{ji} \leq z_i,~\forall~j \in \CN_1,~i \in \I_j.
	\end{equation}
	Observe that $(\rev{w},z)$ satisfies \eqref{tmeq:1}--\eqref{tmeq:2} if and only if $\sum_{i \in \I_{j}} z_i  \geq 1$ and $z_i \in \{0,1\}$ hold for all  $j \in \CN_1$ and  $i \in [t]$, respectively.
	Therefore, \eqref{tmeq:3}--\eqref{tmeq:2} has a feasible solution if and only if \eqref{Wdef} has a feasible solution, which completes the proof.
\end{proof}

Due to the negative result in \cref{nphard}, we shall develop an efficient heuristic algorithm to apply the overlap-oriented node pruning and variable fixing in the following.

\subsection{The polynomial-time approximation approach}
\label{subsect:polynomial branch}
In this subsection, we present a polynomial-time approximation approach to apply overlap-oriented node pruning and variable fixing.
The proposed approach iteratively performs the following two steps until no more reduction is found: (i) deriving lower bounds for variables $v$ from the constraints in $\FC(\CN_0, \CN_1)$, and (ii) using the derived lower bounds to identify when $\FC(\CN_0, \CN_1)=\varnothing$ or determine $\CR_0$ and $\CR_1$ satisfying \eqref{cond}.
Before going into the details, we note that if $\epsilon - \sum_{i \in \CN_1} p_i < 0$, it must follow  $\FC(\CN_0, \CN_1)=\varnothing$, 
and therefore, to perform the reductions, it suffices to consider the case
 \begin{equation}
 		\label{A1}
 		\epsilon - \sum_{i \in \CN_1} p_i \geq 0.
 \end{equation}

\subsubsection{Deriving lower bounds for variables $v$}
\label{subsubsect:lower bound}

As noted in \cref{subsect:consistency analysis}, 
using constraints \eqref{milp-consvz} and $z_i=0 $ for $i \in \CN_0$ in $\FC(\CN_0, \CN_1)$, we obtain the following lower bounds for variables $v$: 
\begin{equation}
	\label{lb0}
	v_k \geq  \xi^{\CN_0}_k,~\forall~k \in [m],
\end{equation}
where we recall that $\xi^{\CN_0}_k = \max_{i \in \CN_0}\{\xi_k^i\}$.
Another way to derive lower bounds for variables $v$ is to use the technique as in \cref{subsect:preprocess}.
Specifically,
for each $k \in [m]$,  let $\pi_k(1), \pi_k(2), \ldots, \pi_k(|\CN_f|) $ be a permutation of $\CN_f$ satisfying
\begin{equation}
	\label{sortings}
	\begin{aligned}
		& \xi^{\pi_k(1)}_{k} \geq  \xi^{\pi_k(2)}_{k} \geq\cdots 
		\geq \xi^{\pi_k(|\CN_f|)}_{k},
	\end{aligned}
\end{equation}
and let
\begin{equation}
	\label{defell}
	\tau_k := \min \left\{s\in\left[|\CN_f|\right] \stt \sum_{i = 1}^s p_{\pi_k(i)} > \epsilon-\sum_{i\in\CN_1}p_i \right\}.
\end{equation}
By equation \eqref{A1}, we must have $\tau_k\geq 1$.
Using the same technique in \cref{subsect:preprocess}, the following must hold:
\begin{equation}
	\label{lb1}
	v_k \geq \xi_k^{\pi_k(\tau_k)},~\forall~k \in [m].
\end{equation}
Combining \eqref{lb0} and \eqref{lb1}, we can obtain tighter lower bounds for variables $v$:
\begin{equation}
	\label{lb}
	v_k \geq \xi_k^{\CN_0, \CN_1}:=  \max \left\{\xi^{\CN_0}_k,\xi_k^{\pi_k(\tau_k)}\right\},~\forall~k \in [m].
\end{equation}
It is worthy noting that the lower bounds $\{\xi_k^{\CN_0, \CN_1}\}$ depend on both $\CN_0$ and $\CN_1$: the larger the $\CN_0$ and $\CN_1$, the tighter the lower bounds $\{\xi_k^{\CN_0, \CN_1}\}$.

Obviously, the computation of $\xi_k^{\CN_0}$, $k \in [m]$, in \eqref{lb0} can be done in the complexity of $\mathcal{O}(m|\CN_0|)$.
As for the computation of $\xi_k^{\pi_k(\tau_k)}$, $k \in [m]$, we need to, for each $k\in [m]$, sort  $\xi_k^i$, $i \in \CN_f$, satisfying \eqref{sortings}  and then determine $\tau_k$ satisfying \eqref{defell}, which can be implemented with the complexity of  $\mathcal{O}(m|\CN_f|\log(|\CN_f|))$.
Therefore, the complexity for the computation of lower bounds for variables $v$ is  $\mathcal{O}(m|\CN_0|+m|\CN_f|\log(|\CN_f|))$.

\subsubsection{Node pruning and variable fixing}
\label{subsubsect:variable fixing}
Using the lower bounds of variables $v$ in \eqref{lb}, we are able to 
give a sufficient condition under which node $(\CN_0, \CN_1)$ can be pruned, as detailed in the following proposition.
\begin{Proposition} 
	\label{Assump}
	Let $\CN_0$, $\CN_1 \subseteq [n]$ be such that $\CN_0 \cap \CN_1 = \varnothing$.
	If $\xi^j \leq \xi^{\CN_0, \CN_1}$ for some $j \in \CN_1$, then
	$\FC(\CN_0, \CN_1)=\varnothing$.
\end{Proposition}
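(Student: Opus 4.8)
The plan is to argue by contradiction, using the valid lower bound $v \geq \xi^{\CN_0, \CN_1}$ furnished by \eqref{lb} together with the if-then constraint \eqref{cons-ifthen}. Suppose $\FC(\CN_0, \CN_1) \neq \varnothing$ and fix any $(v, z) \in \FC(\CN_0, \CN_1)$. Since \eqref{lb} is valid for every point of $\FC(\CN_0, \CN_1)$, we have $v_k \geq \xi_k^{\CN_0, \CN_1}$ for all $k \in [m]$, that is, $v \geq \xi^{\CN_0, \CN_1}$ component-wise.

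Next I would exploit the membership $j \in \CN_1$. The defining constraints of $\FC(\CN_0, \CN_1)$ force $z_j = 1$, and the if-then constraint \eqref{cons-ifthen} then yields $v \ngeq \xi^j$, i.e., $v_k < \xi^j_k$ for at least one $k \in [m]$. On the other hand, the hypothesis $\xi^j \leq \xi^{\CN_0, \CN_1}$ combined with the bound just derived gives, component-wise, $\xi^j \leq \xi^{\CN_0, \CN_1} \leq v$, hence $v \geq \xi^j$. This contradicts $v \ngeq \xi^j$, so no feasible point can exist and $\FC(\CN_0, \CN_1) = \varnothing$.

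The core of the argument is immediate once the lower bound is available, so I do not expect a genuine obstacle; the reasoning is essentially a one-line chaining of inequalities followed by a clash with the if-then constraint. The only points requiring care are bookkeeping ones: first, the bound \eqref{lb} is derived under assumption \eqref{A1}, so I would note that if \eqref{A1} fails then $\FC(\CN_0, \CN_1) = \varnothing$ holds trivially (as observed just before \eqref{A1}), which reduces the argument to the case where \eqref{lb} is legitimately available; and second, one must handle the component-wise meaning of $\geq$ and $\ngeq$ consistently, since $v \ngeq \xi^j$ asserts strict violation in a single coordinate whereas $v \geq \xi^j$ asserts the reverse in all coordinates, and it is precisely this incompatibility that delivers the contradiction.
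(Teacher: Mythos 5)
Your proposal is correct and follows essentially the same route as the paper's proof: both invoke the lower bound \eqref{lb} to get $v \geq \xi^{\CN_0,\CN_1} \geq \xi^j$ and then derive a contradiction with the if-then constraint \eqref{cons-ifthen} and the fixing $z_j = 1$ for $j \in \CN_1$ (you phrase the clash as $v \ngeq \xi^j$ versus $v \geq \xi^j$, the paper as $z_j = 0$ versus $z_j = 1$, which is the same argument). Your extra remark about handling the case where \eqref{A1} fails is a sensible bookkeeping addition but does not change the substance.
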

\begin{proof}
	Let $j \in \CN_1$ such that $\xi^j \leq  \xi^{\CN_0, \CN_1}$. 
	Suppose, otherwise, that $(v,z) \in \FC(\CN_0, \CN_1)$. 
	By \eqref{lb}, we have $v \geq \xi^{\CN_0, \CN_1}\geq \xi^j$, which, together with $z_j\in \{0,1\}$ and \eqref{cons-ifthen}, implies that $z_j = 0$. 
	However, this contradicts with $z_j =1$ (as $j \in \CN_1$) and thus  
	$\FC(\CN_0, \CN_1) = \varnothing$.
\end{proof}

Next, we attempt to perform variable fixing for the case that the condition in \cref{Assump} does not hold, i.e., 
\begin{equation}
	\label{assumption2}
	\xi^j \nleq \xi^{\CN_0, \CN_1},~\forall~j \in \CN_1.
\end{equation}
To begin with, we note that using the lower bounds of variables $v$ in \eqref{lb}, constraints $v \ngeq \xi^j $, or equivalently, $\bigvee_{k \in [m]} \left(v_k < \xi^j_k \right) = 1$  in $\FC(\CN_0, \CN_1)$ can be simplified as  
\begin{equation}
	\label{civ1}
	c_j(v): = \bigvee_{k \in\bar{\M}_j} \left(v_k < \xi^j_k \right)=1,~\forall~j \in \CN_1,
\end{equation}
where 
\begin{equation}
	\label{defMi1}
	\bar{\M}_j := \left\{k \in [m] \stt \xi^j_k > \xi_k^{\CN_0, \CN_1}\right\}.
\end{equation}
Let 
\begin{equation}
	\label{defS01}
	\begin{aligned}
	& \CR_1 := \left\{i\in\CN_f \stt c_j(\xi^i) = 0~\text{holds for some}~j\in\CN_1\right\},\\
	& \CR_0 :=\left\{i\in\CN_f \stt p_i > \epsilon-\sum_{j \in\CN_1\cup \CR_1}p_j
	~\text{or}~
	\xi^{i} \leq \xi^{\CN_0, \CN_1}
	\right\}.
	\end{aligned}
\end{equation}
Note that compared with the definition $\M_j$ in \cref{subsect:consistency analysis}, we use the tighter lower bounds $\xi_k^{\CN_0, \CN_1}$ for the definition of $\bar{\M}_j$, and thus $\bar{\M}_j\subseteq \M_j$.
Also note that the tighter the lower bounds $\xi_k^{\CN_0, \CN_1}$, the smaller the $\bar{\M}_j$, and the larger the $\CR_0$ and $\CR_1$.

The following proposition shows that for node $(\CN_0, \CN_1)$, variables $z_i$ for $i \in \CR_0$ can be fixed to zero; and variables $z_i$ for $i \in \CR_1$ can be fixed to one.

\begin{Proposition}
	\label{fixvars}
	Let $\CN_0$, $\CN_1 \subseteq [n]$ be such that $\CN_0 \cap \CN_1 = \varnothing$ and \eqref{assumption2} hold, and $\CR_0$ and $\CR_1$ be defined in \eqref{defS01}.
	Then $\FC(\CN_0, \CN_1) = \FC(\CN_0\cup\CR_0, \CN_1\cup\CR_1)$.
\end{Proposition}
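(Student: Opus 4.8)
The plan is to prove the set equality $\FC(\CN_0, \CN_1) = \FC(\CN_0\cup\CR_0, \CN_1\cup\CR_1)$ by establishing inclusion in both directions. Since adding variable fixings only restricts the feasible set, the inclusion $\FC(\CN_0\cup\CR_0, \CN_1\cup\CR_1) \subseteq \FC(\CN_0, \CN_1)$ is immediate: any point satisfying the additional constraints $z_i=0$ for $i\in\CR_0$ and $z_i=1$ for $i\in\CR_1$ certainly satisfies the original (smaller) system. So the entire content of the proof lies in the reverse inclusion $\FC(\CN_0, \CN_1) \subseteq \FC(\CN_0\cup\CR_0, \CN_1\cup\CR_1)$.

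For the reverse inclusion, I would take an arbitrary $(v,z) \in \FC(\CN_0, \CN_1)$ and show that automatically $z_i = 0$ for every $i \in \CR_0$ and $z_i = 1$ for every $i \in \CR_1$; this suffices since all other defining constraints of $\FC$ are unchanged. First I would handle $\CR_1$. Fix $i \in \CR_1$, so by the definition in \eqref{defS01} there is some $j \in \CN_1$ with $c_j(\xi^i) = 0$. Using the lower bound $v \geq \xi^{\CN_0,\CN_1}$ from \eqref{lb}, the constraint $v \ngeq \xi^j$ reduces to $c_j(v)=1$ as in \eqref{civ1}. The key monotonicity fact (already used in the proof of \cref{ConditionOneandTwo}) is that $c_j$ is monotone: if $\xi^i \leq v$ componentwise then $c_j(\xi^i) \geq c_j(v)$ on the restricted index set $\bar{\M}_j$. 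Since $c_j(\xi^i)=0$ while $c_j(v)=1$, the point $v$ cannot dominate $\xi^i$, i.e. $v \ngeq \xi^i$; combined with $v \geq \xi^i(1-z_i)$ from \eqref{milp-consvz} and $z_i\in\{0,1\}$, this forces $z_i = 1$.

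Next I would handle $\CR_0$, splitting into the two cases of its definition. If $\xi^i \leq \xi^{\CN_0,\CN_1} \leq v$, then $v \geq \xi^i$ holds, and the if-then constraint \eqref{cons-ifthen} (namely $z_i=1 \Rightarrow v \ngeq \xi^i$) together with $v\geq\xi^i$ forces $z_i = 0$. For the other case, where $p_i > \epsilon - \sum_{j\in\CN_1\cup\CR_1} p_j$, I would invoke the knapsack constraint \eqref{milp-consknap}: since $z_j = 1$ is already established for all $j\in\CN_1$ (by membership) and all $j\in\CR_1$ (by the previous paragraph), setting $z_i=1$ as well would give $\sum_{j\in\CN_1\cup\CR_1} p_j z_j + p_i z_i \geq \sum_{j\in\CN_1\cup\CR_1}p_j + p_i > \epsilon$, violating \eqref{milp-consknap}; hence $z_i = 0$.

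The main obstacle — or rather the point requiring the most care — is the ordering and interdependence of the fixings: the $\CR_0$ bound $p_i > \epsilon - \sum_{j\in\CN_1\cup\CR_1}p_j$ already references $\CR_1$, so I must establish $z_j=1$ for all $j\in\CR_1$ \emph{before} arguing the knapsack-based fixing of $\CR_0$. I would therefore structure the argument to fix $\CR_1$ first, then $\CR_0$, making this dependency explicit. A secondary subtlety is confirming that the reduction of $v\ngeq\xi^j$ to the restricted disjunction $c_j(v)=1$ over $\bar{\M}_j$ is valid, which rests precisely on the lower bound \eqref{lb}: for $k\notin\bar{\M}_j$ we have $\xi^j_k \leq \xi_k^{\CN_0,\CN_1} \leq v_k$, so those coordinates can never trigger $v_k < \xi^j_k$ and may be safely dropped from the disjunction. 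Once these two points are secured, the monotonicity argument and the knapsack argument are routine.
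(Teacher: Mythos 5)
Your proposal is correct and follows essentially the same route as the paper's proof: show that any $(v,z) \in \FC(\CN_0,\CN_1)$ automatically satisfies $z_i=1$ for $i\in\CR_1$ (via the antitonicity of $c_j(\cdot)$ applied to $c_j(\xi^i)=0$ and $c_j(v)=1$, forcing $v\ngeq\xi^i$ and hence $z_i=1$ through \eqref{milp-consvz}) and $z_i=0$ for $i\in\CR_0$ (via the knapsack constraint \eqref{milp-consknap} in the first case and via $v\geq\xi^{\CN_0,\CN_1}\geq\xi^i$ together with \eqref{cons-ifthen} in the second). Your explicit remarks on the trivial inclusion direction and on fixing $\CR_1$ before $\CR_0$ are points the paper leaves implicit, but they do not change the substance of the argument.
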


\begin{proof}
	We shall show that for any $(v,z) \in \FC(\CN_0, \CN_1)$, $z_i=1$ and $z_i=0$ hold for $i \in \CR_1$ and $i \in \CR_0$, respectively. 
	\begin{itemize}
	\item[(i)] For $i \in \CR_1$, $c_j(\xi^i) = 0$ holds for some $j\in\CN_1$.
	By \eqref{civ1}, we have $c_j(v) = 1$.
	Then $v \ngeq \xi^i$ follows from the definition of $c_j(\cdot)$ in \eqref{civ1}, that is, for any $v^1, v^2 \in \R^m_+$ with $v^1 \leq v^2$, it follows $c_j(v^2) \leq c_j(v^1)$.
	Together with $z_i \in \{0,1\}$ and \eqref{milp-consvz}, this implies $z_i =1$.
	\item[(ii)] For $i \in \CR_0$,
	if $p_i > \epsilon-\sum_{j \in\CN_1\cup\CR_1}p_j$, then it follows from \eqref{milp-consknap} and $z_j = 1$ for all $j \in \CN_1\cup \CR_1$ that $z_i = 0$.
	Otherwise, $\xi^i \leq \xi^{\CN_0, \CN_1}$, and by  \eqref{lb}, $ v \geq  \xi^{\CN_0, \CN_1} \geq \xi^i$ holds.
	Together with  $z_{i} \in \{0,1\}$ and \eqref{cons-ifthen}, this implies $z_i = 0$. \qedhere
	\end{itemize}
\end{proof}

Notice that using \cref{Assump} to determine whether $\FC(\CN_0, \CN_1) = \varnothing$ and computing $c_j(v)$, $j \in \CN_1$, can all be done in the complexity of  $\CO(m|\CN_1|)$. 
The computations of $\CR_0$ and $\CR_1$ in \eqref{defS01} can be implemented with the complexity of $\CO(m|\CN_1||\CN_f|)$. 
Therefore, the overall complexity of performing node pruning and variable fixing is $\CO(m|\CN_1||\CN_f|)$.

\subsubsection{The overall algorithmic framework}
\label{subsubsect:polynomial reduction}

After performing the variable fixing in \cref{subsubsect:variable fixing}, we may compute tighter lower bounds for variables $v$ using the procedure \cref{subsubsect:lower bound} again, which, in turn, opens up new possibilities to detect more reductions using the procedure in \cref{subsubsect:variable fixing}.
We illustrate this using the following example.
\begin{Example}\label{algex}
	Let us consider node $5$ in \cref{figure-newbranch} where the dominance-based branching is applied to solve the problem in \cref{example1}. 
	At the beginning, $\CN_0 := \CB_0 = \{4\}$ and $\CN_1 := \CB_1 = \{5\}$.
	Applying the procedure in \cref{subsubsect:lower bound}, we obtain the lower bounds $\xi^{\CN_0, \CN_1} = (5,\,2,\,6)^\top$ for variables $v$.
	Applying the procedure in \cref{subsubsect:variable fixing}, we obtain $\CR_0 = \varnothing$ and  $\CR_1 = \{6,7\}$, and thus $\FC(\{4\}, \{5\})=\FC(\{4\}, \{5,6,7\})$.
	
	Now letting $\CN_0 :=\{4\}$ and $\CN_1 := \{5,6,7\}$, applying the procedure in \cref{subsubsect:lower bound} again, we obtain the tighter lower bounds $\xi^{\CN_0, \CN_1} = (5,\,2,\,10)^\top$ for variables $v$.
	Similarly, applying the procedure in \cref{subsubsect:variable fixing} again, we can obtain $\CR_0 = \{2,3\}$ and $\CR_1 = \varnothing$, and thus $\FC(\{4\}, \{5,6,7\})=\FC(\{2,3,4\}, \{5,6,7\})$.
\end{Example}

\cref{algex} offers a hint to the algorithmic design for the  overlap-oriented node pruning and variable fixing.
Specifically, we can iteratively apply the procedures in \cref{subsubsect:lower bound,subsubsect:variable fixing} to detect reductions for node $(\CN_0, \CN_1)$ until no more reduction is found (i.e., either the node is pruned or $\CR_0 =\CR_1 =\varnothing$). 
The details are summarized in \cref{algorithm-p}.
Note that in one iteration of \cref{algorithm-p}, either the procedure is terminated or at least one more variable is fixed. 
Therefore, the number of iterations in \cref{algorithm-p} is at most $|\CN_f|$, and the worst-case complexity of \cref{algorithm-p} is polynomial.
Moreover, as will be demonstrated in  \cref{sect:computational results},  \cref{algorithm-p} is indeed competitive with the exact approach in \cref{subsect:consistency analysis} in terms of reducing the tree size while enjoying a high computational efficiency.

\begin{algorithm}[!htbp]
	\caption{An iterative procedure for overlap-oriented node pruning and variable fixing}
	\label{algorithm-p}
	\KwIn{Node $(\CN_0, \CN_1)$.}
	\Repeat{no more reduction is found}
	{
		If $\epsilon - \sum_{i \in \CN_1} p_i < 0$, \textbf{stop} and claim that the node is infeasible\;
		Compute the lower bounds $\xi^{\CN_0,\CN_1}$ as in \eqref{lb}\;
		If $\xi^j \leq \xi^{\CN_0,\CN_1}$  for some $j \in\CN_1$, \textbf{stop} and claim that the node is infeasible\;
		Compute $\CR_0$ and $\CR_1$ as  in \eqref{defS01}\;
		Update $\CN_0 \leftarrow \CN_0 \cup \CR_0$ and $\CN_1 \leftarrow \CN_1 \cup \CR_1$\;
	}
\end{algorithm}

\section{Computational results}
\label{sect:computational results}
In this section, we present computational results to illustrate the effectiveness of the proposed dominance-based branching \rev{with the} overlap-oriented node pruning and variable fixing techniques.
\rev{To do this, we first conduct computational experiments to compare the proposed dominance-based branching (with the overlap-oriented node pruning and variable fixing techniques) with the state-of-the-art approaches in \citet{Ruszczynski2002} and \citet{Luedtke2010a} in \cref{subsect:stateoftheart}.}
\rev{Then, to gain more insight into the proposed approaches, we compare the (vanilla) dominance-based branching with the direct use of dominance inequalities in \cref{subsect:branch}, and evaluate the performance effect of the overlap-oriented node pruning and variable fixing in \cref{subsect:fixing}.}
\rev{Finally, we compare the performance of the proposed dominance-based branching with the overlap-oriented node pruning and variable fixing techniques for \CCPs with different dimensions $m$ of the random vector $\xi$ in \cref{subsect:effectm}.}

The proposed methods were implemented in C language and linked to the state-of-the-art open source \MILP solver \rev{\scipv\cite{solverscip2024}, using \soplexv} as the \LP solver.
SCIP includes a routine of domain propagation methods \cite{Achterberg2007} that is applied at each node of the search tree to tighten the variable bounds (including variable fixings) or detect infeasible subproblems.
Therefore, we implemented the proposed dominance-based branching \rev{with the} overlap-oriented node pruning and variable fixing techniques as domain propagation methods. 
With this implementation, other sophisticated components of SCIP including  the default fine-tuned branching strategy (called \emph{hybrid branching}) can still be used. 
Moreover, as demonstrated in \cite{Gamrath2014,Gamrath2020a}, when selecting the branching variables, the default branching strategy of SCIP will invoke the domain propagation methods; and thus, this implementation also enables the proposed dominance-based branching \rev{with the} overlap-oriented node pruning and variable fixing techniques to guide the selection of branching variables.

In order to improve the overall solution efficiency, we follow \cite{Luedtke2010a} to apply the preprocessing technique to strengthen formulation \eqref{milp} of problem \eqref{ccps}. 
Specifically, since $\xi^0$ are the lower bounds  for variables $v$ (see \cref{lemma:defxi0}), we can remove constraint $v_k \geq \xi_k^i (1-z_i)$ from the problem formulation if $\xi_k^i \leq \xi_k^0$ and strengthen it as  $v_k \geq \xi^i_k - ({\xi}^i_k-\xi^0_k) z_i$  otherwise.

In our computational study, we consider three \CCPs studied in the literature, which are the  \rpp problem \cite{Gurvich2010,Luedtke2014}, \mpp problem  \cite{Dey2023}, and \lsp problem \cite{Beraldi2002}.
Our testset consists of $135$ \rpp instances, $135$ \mpp instances, and $180$ \lsp instances.
\rev{Note that, in \cref{subsect:effectm}, we use a testset of $270$ \mpp instances to compare the performance of the proposed approaches on instances with different dimensions $m$ of the random vector $\xi$.}
The descriptions of the problems and the instance construction procedures are  provided in Appendix \ref{sect:appendix}.
Except where explicitly stated, all computations were performed on a cluster of Intel(R) Xeon(R) Gold 6140 CPU @ 2.30GHz computers running Linux,
with a time limit of $4$ hours and a relative gap of $0 \%$, \rev{where the relative gap is defined as \citep{MIPLIB2010}:
\begin{equation*}
	\text{RG} = 
	\left\{\begin{array}{ll}
		\frac{\text{UB}-\text{LB}}{\min\{|\text{UB}|, |\text{LB}|\}} \times 100\%,  & \ \text{if}~{\text{LB}} \cdot \text{UB}> 0; \\
		\infty, &\ \text{otherwise},
	\end{array}\right.
\end{equation*}
and $\text{LB}$ and $\text{UB}$ denote the lower and upper bounds returned by SCIP, respectively.
By definition, a relative gap of $0 \%$ implies $\text{LB}=\text{UB}$, that is, an optimal solution has been obtained by SCIP.}
Throughout this section, all averages are reported as the shifted geometric mean with shifts of $1$ second and $100$ nodes for the CPU time and the number of explored nodes, respectively.
The shifted geometric mean of values $x_1,x_2,\ldots,x_n$ with shift $s$ is defined as $\prod^n_{k=1} (x_k +s)^{1/n}-s$; see \cite{Achterberg2007}.

\subsection{\rev{Comparison with the state-of-the-art approaches in \citet{Ruszczynski2002} and \citet{Luedtke2010a}}}\label{subsect:stateoftheart}

\rev{In this subsection, we demonstrate the efficiency of the proposed dominance-based branching (with the overlap-oriented node pruning and variable fixing techniques) by comparing it with the state-of-the-art approaches in \citet{Ruszczynski2002} and \citet{Luedtke2010a}. In particular, we compare the following three settings:
	\begin{itemize}
		\item[$\bullet$] \tblMIX: solving formulation \eqref{milp} using  the \BnC algorithm with the mixing cuts of \citep{Luedtke2010a}\footnote{\rev{\citet{Kucukyavuz2012,Abdi2016,Zhao2017} developed other classes of valid inequalities for problem \eqref{milp}. 
		However, the results in \citep{Luedtke2010a} indicate that the mixing cuts can already provide a substantial portion of the strength of the \LP relaxation of formulation \eqref{milp}, and thus we do not implement other valid inequalities in our experiment.}};
		\item[$\bullet$] \tblRF: \tblMIX with the dominance inequalities \cite{Ruszczynski2002} (when adding the dominance inequalities \eqref{precedenceconstraints} into formulation \eqref{milp}, we only add the nonredundant ones; see \cref{compactineq});
		\item[$\bullet$] \tblPBS: solving formulation \eqref{milp} using the proposed dominance-based branching (with the overlap-oriented node pruning and variable fixing techniques) where the mixing cuts were also implemented.
\end{itemize}}

\cref{solver} summarizes the computational results of the three problems\footnote{Throughout, we report the aggregated results (for each of the three problems). 
Detailed statistics of instance-wise computational results can be found in the online supplement available at \url{https://drive.google.com/file/d/1hZnv0jgoFUjyIS7Fwyo6bA_6p1tu9yil/view?usp=share_link}.}.
The rows ``$\geq s$" collect the subsets of instances that can be solved by at least one setting within the time limit and for which the number of explored nodes returned by at least one of the three settings is at least $s$.
With the increasing $s$, this provides a hierarchy of subsets of increasing difficulty (as the tree size is large).
In column \texttt{\#}, we report the number of instances that can be solved to optimality within the time limit by at least one setting.
For each setting, we report the number of solved instances (\tblS), the average CPU time in seconds (\tblT),
and the average number of explored nodes (\tblN). 
\rev{For setting \tblPBS, we additionally report the average CPU time spent in the implementation of the proposed dominance-based branching with the approximation approach of the overlap-oriented node pruning and variable fixing ($\tblTd$), 
the average number of variables fixed by the dominance-based branching with the overlap-oriented variable fixing per node (\tbldomred), 
and the average number of nodes pruned by the overlap-oriented node pruning using the approximation approach (\tblcutoff).}

\rev{We first compare the performance of settings \tblMIX and \tblRF.
	As shown in \cref{solver}, the number of explored nodes returned by \tblRF is smaller than that of \tblMIX, showing that the dominance inequalities can indeed contribute to eliminate the search tree size.
	For \lsp instances where a large tree reduction by the dominance inequalities can be observed, \tblRF effectively outperforms \tblMIX. 
	For \rpp and \mpp instances, only a slight tree reduction is observed and thus \tblRF cannot outperform \tblMIX; indeed, for \rpp instances, \tblRF is even outperformed by \tblMIX due to a large problem size caused by the addition of the dominance inequalities.}

Next, we compare the performance of settings \tblPBS and \tblRF.
\rev{We can observe from \cref{solver} that \tblPBS can indeed effectively fix variables and prune the nodes in the search tree; overall, it fixes an average of $51.3$ variables per node and prunes $59$ nodes per instance. Due to this, \tblPBS significantly outperforms \tblRF and \tblMIX.}
In particular, \rev{compared with \tblRF (which is relatively faster than \tblMIX), the proposed \tblPBS can solve \rev{$75$} more instances to optimality within the time limit of $4$ hours;}
the average number of explored nodes is reduced by a factor of $5.5$; 
and the average CPU time is reduced by a factor of $\rev{3.4}$. 
Moreover, for hard instances that require to explore at least $1000$ nodes by at least one of the \rev{three} settings, 
we can observe a drastic node reduction factor of $\rev{10.3}$ and a drastic runtime speed-up factor of $\rev{5.8}$.
In addition, among the three problems, we can observe a tremendous improvement on \mpp instances where the tree size constructed by \tblRF is relatively large. 
Indeed, using the proposed \tblPBS, the average number of explored nodes and the average CPU time decrease from $\rev{4916}$ and $\rev{539.1}$ seconds to $\rev{161}$ and \rev{$53.5$} seconds, respectively, with $39$ more solved instances.
These results show the \rev{effectiveness} of the proposed dominance-based branching (\rev{with the} overlap-oriented node pruning and variable fixing techniques) in reducing the search tree size and improving the efficiency of solving \CCPs, and \rev{the efficiency of the proposed approaches over the state-of-the-art approaches in \citet{Ruszczynski2002} and \citet{Luedtke2010a}}.

\begin{table}[htbp]
	\small
	\addtolength{\tabcolsep}{-2.2pt}
	\centering
	\caption{\rev{Performance comparison of the proposed dominance-based branching (with the overlap-oriented node pruning and variable fixing techniques) with the state-of-the-art approaches in \citet{Ruszczynski2002} and \citet{Luedtke2010a}.}}
	\begin{tabular*}{\textwidth}{@{\extracolsep\fill}lrrrrrrrrrrrrr@{\extracolsep\fill}}
		\toprule
		\multirow{2}{*}{\PROB} & \multirow{2}{*}{\NUM} &
		\multicolumn{3}{@{}c@{}}{\tblMIX} & \multicolumn{3}{@{}c@{}}{\tblRF} & \multicolumn{6}{@{}c@{}}{\tblPBS} \\
		\cmidrule(l{4pt}r{3pt}){3-5} \cmidrule(l{4pt}r{3pt}){6-8} \cmidrule(l{4pt}r){9-14}
		& &\tblS & \tblT & \tblN &\tblS & \tblT & \tblN & \tblS & \tblT & \tblN & \tblTd & \tbldomred & \tblcutoff  \\ 
		\midrule
		\tblRPP  &119 & 101 & 339.5 & 230 & 102 & 398.0 & 226 & \textbf{119} & \textbf{160.8} & \textbf{59}& 9.5 & 42.5 & 1 \\ 
		\tblMPP &122 & 83 & 538.4 & 4937 & 83 & 539.1 & 4916 & \textbf{122} & \textbf{53.5} & \textbf{161}& 2.9 & 99.7 & 31 \\ 
		\tblLSP &164 & 136 & 413.0 & 8285 & 145 & 307.7 & 3617 & \textbf{164} & \textbf{162.3} & \textbf{1266}& 25.8 & 28.5 & 153 \\ 
		\midrule
		\texttt{All} &405 & 320 & 422.3 & 2681 & 330 & 393.0 & 1890 & \textbf{405} & \textbf{116.0} & \textbf{341}& 10.4 & 51.3 & 59 \\ 
		\texttt{$\geq10$} & 348 & 263 & 761.9 & 4669 & 273 & 687.4 & 3133 & \textbf{348} & \textbf{163.4} & \textbf{460}& 12.5 & 56.8 & 71 \\ 
		\texttt{$\geq100$} & 315 & 230 & 1087.3 & 6835 & 240 & 943.4 & 4438 & \textbf{315} & \textbf{198.5} & \textbf{564}& 14.3 & 60.4 & 81 \\ 
		\texttt{$\geq1000$} & 244 & 159 & 1985.7 & 16702 & 169 & 1564.9 & 9801 & \textbf{244} & \textbf{271.1} & \textbf{955}& 18.5 & 66.1 & 116 \\ 
		\bottomrule
	\end{tabular*}
	\label{solver}
\end{table}

\subsection{Comparison of the dominance-based branching with the direct use of dominance inequalities}\label{subsect:branch}

\rev{To gain more insight into the proposed dominance-based branching, here we compare it with the direct use of dominance inequalities. 
	To do this, we compare the following two settings}:
\begin{itemize}
	\item [$\bullet$] \tblFBS: \rev{solving formulation \eqref{milp} using the proposed dominance-based branching (but without the overlap-oriented node pruning and variable fixing techniques) where the mixing cuts were also implemented.}
	\item [$\bullet$] \tblSRF: \rev{solving formulation \eqref{milp} with the stronger version of dominance inequalities in \eqref{newprecedenceconstraints} (in the sense that inequalities in \eqref{newprecedenceconstraints} imply all inequalities in \eqref{precedenceconstraints}) using  the \BnC algorithm, where the mixing cuts were also implemented.}
\end{itemize}
\rev{For benchmarking purposes, we also report the results of \tblRF where the weaker version of dominance inequalities in \eqref{precedenceconstraints} were added.}

\rev{\cref{branch} summarizes the computational results of settings \tblFBS, \tblSRF, and \tblRF.}
For settings  \tblSRF and \tblRF, we additionally report the percentage of dominance pairs between scenarios and the percentage of nonredundant dominance inequalities, defined by $\rev{\tblDR} := \DP/\MDP \times 100$ and $\tblTDR:= \NDI/\MDP \times 100$, respectively.
Here, \DP is the number of dominance pairs,  
$\MDP=n(n-1)/2$ is the maximum number of possible dominance pairs, 
and \NDI is the number of nonredundant dominance inequalities.

\begin{table}[htbp]
	\small
	\centering
	\addtolength{\tabcolsep}{-3pt}
	\caption{Performance comparison of settings \tblFBS, \tblSRF, and \tblRF.}
	\begin{tabular*}{\textwidth}{@{\extracolsep\fill}lrrrrrrrrrrrrrrrr@{\extracolsep\fill}}
		\toprule
		\multirow{2}{*}{\PROB} & \multirow{2}{*}{\NUM}
		& \multicolumn{3}{c}{\tblFBS} & \multicolumn{5}{c}{\tblSRF} & \multicolumn{5}{c}{\tblRF} \\ 
		\cmidrule(l{4pt}r{3pt}){3-5}\cmidrule(l{4pt}r{3pt}){6-10} \cmidrule(l{4pt}r{3pt}){11-15} &
		& \tblS & \tblT & \tblN & \tblS & \tblT & \tblN & \tblDR & \tblTDR & \tblS & \tblT & \tblN & \tblDR & \tblTDR \\
		\midrule 
		\tblRPP &112 & \textbf{110} & \textbf{175.9} & 117  & 108 & 225.5 & \textbf{90}  & 51.4 & 1.2  & 102 & 317.8 & 211  & 5.0 & 4.3 \\ 
		\tblMPP &110 & \textbf{110} & \textbf{47.6} & 301  & \textbf{110} & 56.5 & \textbf{261}  & 40.3 & 0.2  & 83 & 376.5 & 4175  & 0.1 & 0.1 \\ 
		\tblLSP  &160 & \textbf{158} & 169.6 & 2410  & \textbf{158} & \textbf{163.0} & \textbf{1744}  & 90.9 & 0.4  & 145 & 279.5 & 3296  & 35.5 & 7.2 \\ 
		\midrule
		\texttt{All} &382 & \textbf{378} & \textbf{119.1} & 622  & 376 & 132.3 & \textbf{492}  & 63.2 & 0.6  & 330 & 316.2 & 1701  & 15.2 & 4.3 \\ 
		\bottomrule
	\end{tabular*}
	\label{branch}
\end{table}

First, we can observe from \cref{branch} that the number of dominance pairs in \eqref{newprecedenceconstraints} is much larger than that in \eqref{precedenceconstraints}, 
which shows the effectiveness of the preprocessing technique in \cref{subsect:preprocess} in detecting more dominance pairs.
Although the number of dominance pairs in \eqref{newprecedenceconstraints} is very large, only a small proportion of the inequalities (i.e., the nonredundant ones) needs to be added into the formulation;
in some cases, the number of nonredundant inequalities in \eqref{newprecedenceconstraints} is even smaller than that of the nonredundant ones in \eqref{precedenceconstraints}.
Due to these two advantages, \tblSRF performs much better than \tblRF.
In particular, using \tblSRF, $\rev{46}$ more instances can be solved to optimality, and the average CPU time and the average number of explored nodes are reduced by factors of $\rev{2.4}$ and $\rev{3.5}$, respectively.

Next, we compare the performance of settings \tblFBS and \tblSRF.
As shown in \cref{subsect:existing work}, under certain conditions, the two settings are theoretically equivalent in terms of exploring the same search trees.
In \cref{branch}, we observe that the numbers of explored nodes returned by the two settings are similar;
overall, \tblSRF returns a slightly smaller number of explored nodes than \tblFBS. 
Notice that the latter is reasonable as directly adding the dominance inequalities into the formulation can enhance other components of \MILP solvers, 
such as triggering more internal cuts generation and more conflict analysis \citep{Achterberg2007a,Witzig2021a}.
However, \rev{for \rpp and \mpp instances}, the smaller number of explored nodes returned \rev{by} \tblSRF cannot compensate for the overhead of a large problem size, as the dominance inequalities in \eqref{newprecedenceconstraints} need to be added as constraints into the formulation.
In contrast, the proposed \tblFBS avoids solving a problem with a large problem size and thus achieves an overall better performance;
\rev{overall, the proposed \tblFBS enables us to reduce the solution times of \rpp and \mpp instances by factors of $1.3$ and $1.2$, respectively.
Note that for \lsp instances, \tblSRF can return a fairly smaller number of nodes than that returned by \tblFBS, and thus performs slightly better than \tblFBS.}

\subsection{Performance effect of the overlap-oriented node pruning and variable fixing}
\label{subsect:fixing}

We now evaluate the performance effect of the overlap-oriented node pruning and variable fixing in \cref{sect:consistency branch}.
\cref{pruneandfix} summarizes the computational results of settings \rev{\tblFBS and \tblPBS.}
As shown in \cref{pruneandfix}, the overlap-oriented node pruning and variable fixing can \rev{fix a relatively large number of variables at the nodes and prune a fairly large number of nodes}, while the computational overhead of the approximation approach for implementing them is fairly small.
In total, equipped with the overlap-oriented node pruning and variable fixing, \tblPBS can solve \rev{$27$} more instances to optimality.
Additionally, the average CPU time and the average number of explored nodes are reduced by factors of \rev{$1.4$} and \rev{$2.2$}, respectively.
For hard instances that require to explore at least $1000$ nodes by at least one of the two settings, 
we can even observe a factor of $\rev{1.8}$ runtime speed-up and a factor of $\rev{3.5}$ tree size reduction. 
This shows the effectiveness of the overlap-oriented node pruning and variable fixing in further improving the performance of the dominance-based branching for solving \CCPs.

\begin{table}[htbp]
	 \small
	\centering
	\addtolength{\tabcolsep}{-3pt}
	\caption{Performance comparison of settings \tblFBS and \tblPBS.}
	\begin{tabular*}{\textwidth}{@{\extracolsep\fill}lrrrrrrrrrrrr@{\extracolsep\fill}}
		\toprule
		\multirow{2}{*}{\PROB} & \multirow{2}{*}{\NUM} &
		\multicolumn{5}{c}{\tblFBS} & \multicolumn{6}{c}{\tblPBS} \\
		\cmidrule{3-7} 	\cmidrule{8-13} 
		& & \tblS & \tblT & \tblN &\tblTd & \tbldomred & \tblS & \tblT & \tblN &\tblTd & \tbldomred & \tblcutoff \\ 
		\midrule
		\tblRPP&119 & 110 & 228.2 & 135 & \textbf{1.5} & 7.4 & \textbf{119} & \textbf{160.8} & \textbf{59} & 9.5 & 42.5 & 1 \\ 
		\tblMPP &122 & 110 & 84.1 & 496 & \textbf{1.5} & 16.6  & \textbf{122} & \textbf{53.5} & \textbf{161} & 2.9 & 99.7 & 31 \\
		\tblLSP  &164 & 158 & 189.1 & 2700 & \textbf{2.6} & 14.8  & \textbf{164} & \textbf{162.3} & \textbf{1266} & 25.8 & 28.5 & 153 \\
		\midrule
		\texttt{All} &405 & 378 & 156.6 & 749 & \textbf{1.9} & 13.1  & \textbf{405} & \textbf{116.0} & \textbf{341} & 10.4 & 51.3 & 59 \\ 
		\texttt{$\geq10$} & 291 & 264 & 367.3 & 1834 & \textbf{2.8} & 17.4  & \textbf{291} & \textbf{232.8} & \textbf{679} & 16.6 & 57.0 & 91 \\ 
		\texttt{$\geq100$} & 233 & 206 & 638.0 & 3660 & \textbf{3.7} & 18.0  & \textbf{233} & \textbf{368.1} & \textbf{1158} & 23.5 & 59.9 & 124 \\ 
		\texttt{$\geq1000$} & 164 & 137 & 1181.5 & 9194 & \textbf{5.5} & 19.0 & \textbf{164} & \textbf{648.6} & \textbf{2636} & 39.8 & 62.3 & 212 \\ 
		\bottomrule
	\end{tabular*}
	\label{pruneandfix}
\end{table}

In order to illustrate the effectiveness of the proposed approximation approach in \cref{subsect:polynomial branch} for implementing the overlap-oriented node pruning and variable fixing, we compare it with the exact  approach in \cref{subsect:consistency analysis} (denoted by \tblENPF).
In \cref{anpfandenpf}, we report the computational results of \rev{$287$} instances that can be solved to optimality in both  \tblPBS and \tblENPF within $24$ hours. 
We observe that \tblPBS is much more efficient than \tblENPF.
This is quite expected because \tblENPF needs to solve several \MILP problems of the form \eqref{MIP} to perform 
\rev{the overlap-oriented variable fixing,}
which is very time-consuming, as shown in column \tblTd under setting \tblENPF.
On the other hand, 
\rev{using the exact approach, the average number of fixed variables per node returned by \tblENPF is only slightly larger than that of \tblPBS}\footnote{\rev{Note that no overlap-oriented node pruning can be performed in \tblENPF, as discussed in \cref{re:nofixing}.}}, 
\rev{and thus the average number of explored nodes returned by \tblENPF is only slightly smaller than that of \tblPBS.}
These results demonstrate that the proposed approximation algorithm for implementing the overlap-oriented node pruning and variable fixing is competitive with the exact algorithm in terms of reducing the tree size while enjoying a high computational efficiency.

\begin{table}[htbp]
	 \small
	\centering
	\addtolength{\tabcolsep}{-3pt}
	\caption{Performance comparison of settings \tblPBS and \tblENPF.}
	\begin{tabular*}{\textwidth}{@{\extracolsep\fill}lrrrrrrrrrrrrr@{\extracolsep\fill}}
		\toprule
		\multirow{2}{*}{\PROB} & \multirow{2}{*}{\NUM} &
		\multicolumn{6}{c}{\tblPBS} & \multicolumn{6}{c}{\tblENPF} \\
		\cmidrule(l{4pt}r{3pt}){3-8} \cmidrule(l{4pt}r{3pt}){9-14} &
		& \tblS & \tblT & \tblN &\tblTd & \tbldomred & \tblcutoff  & \tblS & \tblT & \tblN &\tblTd & \tbldomred  & \tblcutoff \\ 
		\midrule
		\tblRPP  & 89 & \textbf{89} & \textbf{61.5} & \textbf{16} & \textbf{6.8} & 30.8 & $<$1  & \textbf{89} & 877.3 & \textbf{16} & 661.9 & \textbf{31.5} & 0 \\ 
		\tblMPP & 93 & \textbf{93} & \textbf{20.4} & 46 & \textbf{1.5} & 79.7 & 5  & \textbf{93} & 293.8 & \textbf{43} & 233.3 & \textbf{79.9} & 0 \\ 
		\tblLSP &105 & \textbf{105} & \textbf{46.2} & 231 & \textbf{7.3} & 25.9 & 17  & \textbf{105} & 1164.8 & \textbf{217} & 990.5 & \textbf{26.3} & 0 \\ 
		\midrule
		\texttt{All} &287 & \textbf{287} & \textbf{38.8} & 83 & \textbf{4.5} & 43.0 & 7  & \textbf{287} & 682.9 & \textbf{80} & 547.3 & \textbf{43.4} & 0 \\ 
		\bottomrule
	\end{tabular*}	\label{anpfandenpf}
\end{table}

\rev{\subsection{Performance effect of increasing the dimension $m$ of the random vector} \label{subsect:effectm}}

\rev{In this subsection, we compare the performance of the proposed dominance-based branching (with the overlap-oriented node pruning and variable fixing techniques) with different dimensions $m$ of the random vector $\xi$.}
\rev{\cref{dimension} summarizes the computational results for settings \tblMIX, \tblSRF, \tblFBS, and \tblPBS on \mpp instances.
In the table, column \DeltaS denotes the difference in the number of solved instances returned by each of the three settings (i.e., \tblSRF, \tblFBS, and \tblPBS) and \tblMIX.
A positive value under the three settings means that \tblSRF, \tblFBS, and \tblPBS can solve more instances to optimality than \tblMIX.
Columns \RT and \RN display the ratios of the average CPU time and the average number of explored nodes.
A value greater than $1.0$ represents an improvement for \tblSRF, \tblFBS, and \tblPBS.}

\rev{We first evaluate the performance of \tblSRF and \tblFBS with the increase of $m$. 
	It is worth noting that as the dimension $m$ increases, we should expect that the performance improvement  of the proposed {\tblSRF and}  \tblFBS over \tblMIX tends to decline as the percentage of dominance pairs decreases.
	This trend is clearly illustrated in \cref{dimension}.
	In particular, for instances with $m = 150$, the percentage of dominance pairs is nearly zero, and consequently, the improvement of the proposed {\tblSRF and} \tblFBS over \tblMIX becomes neutral.
	In contrast, for instances with $m = 10$ where the percentage of dominance pairs is $55.9\%$, we can observe a significant performance improvement of {\tblSRF and} \tblFBS over \tblMIX.
	Nevertheless, even for instances with $m = 100$, where the percentage of dominance pairs is $1.2\%$, the proposed {\tblSRF and} \tblFBS still enable us to reduce the average CPU time by {factors of $1.1$ and} $1.2$, {respectively}.}

\rev{Next, we evaluate the performance of \tblPBS with the increase of $m$.  
	We can observe from \cref{dimension} that similar to \tblSRF and \tblFBS, the performance improvement of the proposed \tblPBS over \tblMIX tends to decline as the dimension $m$ increases.
	However, even for instances with $m = 150$, where the percentage of dominance pairs is nearly zero, the proposed \tblPBS still enables us to reduce the average CPU time and the average number of explored nodes by factors of $1.2$ and $1.3$, respectively,
	showing that the overlap-oriented node pruning and variable fixing techniques can even improve the performance of solving \CCPs without the presence of the dominance pairs. This is reasonable, as it does not directly rely on the number of dominance pairs.}

\begin{table}[htbp]
	\small
	\centering
	\caption{\rev{Performance comparison of settings \tblMIX, \tblSRF, \tblFBS, and \tblPBS with different dimensions $m$ of the random vector $\xi$ on the \tblMPP instances.}}
	\addtolength{\tabcolsep}{1pt}
	\begin{tabular*}{\textwidth}{@{\extracolsep\fill}llrrrrrr@{\extracolsep\fill}}
		\toprule
		& $m$ & 10 & 20 & 30 & 50 & 100 & 150 \\
		\midrule
		& \tblDR & 55.9 & 29.7 & 21.3 & 11.1 & 1.2 & 0.1 \\
		\midrule
		\tblMIX
		& \tblS & 34 & 27 & 22 & 17 & 15 & 12 \\
		& \tblT & 337.2 & 660.0 & 772.8 & 679.8 & 1005.1 & 494.6 \\
		& \tblN & 1804 & 1670 & 2372 & 1727 & 2096 & 567 \\
		\midrule
		\tblSRF
		& \DeltaS & 11 & 7 & 9 & 2 & 0 & 0 \\
		& \RT & 6.1 & 5.0 & 5.5 & 3.8 & 1.1 & 1.0 \\
		& \RN & 30.8 & 22.7 & 15.8 & 5.5 & 1.3 & 1.0 \\
		\midrule
		\tblFBS 
		& \DeltaS & 11 & 7 & 9 & 3 & 1 & 0 \\
		& \RT & 8.5 & 5.2 & 5.8 & 3.4 & 1.2 & 1.1 \\
		& \RN & 26.6 & 16.8 & 14.3 & 5.2 & 1.1 & 1.0 \\
		\midrule
		\tblPBS 
		& \DeltaS & 11 & 16 & 12 & 5 & 2 & 1 \\
		& \RT & 12.2 & 8.9 & 9.2 & 4.8 & 2.4 & 1.2 \\
		& \RN & 89.9 & 87.2 & 38.6 & 16.6 & 3.3 & 1.3 \\
		\bottomrule
	\end{tabular*}\label{dimension}
\end{table}

\section{Conclusion}
\label{sect:Conclusion and future work}

In this paper, it has been shown that the presence of the overlap in the search tree makes the standard \MILP formulation of the \CCP difficult to be solved by state-of-the-art \MILP solvers.
In an effort to remedy this, we have developed several approaches to remove the overlap during the \BnC process.
In particular, we have showed that a family of valid nonlinear if-then constraints is able to remove all overlaps in the search tree. 
To tackle the highly nonlinear if-then constraints, we have proposed the dominance-based branching, which, compared with the classic variable branching, is able to partition the current problem into two \MILP  subproblems with much smaller feasible regions, especially when the number of dominance pairs between scenarios is large.  
Moreover, by considering the joint mixing set with a knapsack constraint and the if-then constraints, we have developed overlap-oriented node pruning and variable fixing, applied at each node of the search tree, to remove more overlaps in the tree.
The proposed dominance-based branching \rev{with the} overlap-oriented node pruning and variable fixing can be easily embedded into the \BnC framework along with other sophisticated components of \MILP solvers.
By extensive computational experiments, we have demonstrated that the proposed dominance-based branching \rev{with the} overlap-oriented node pruning and variable fixing techniques can significantly reduce the \BnC search tree size and substantially improve the computational performance of the \MILP-based approach to solving \CCPs.

\begin{appendices}
	\section{An example to illustrate \cref{remark:left}}\label{sec:example}
	In this section, we present an example to illustrate \cref{remark:left}.
	The example is based on the problem obtained by modifying scenario $\xi^6$ to $(6,\,2,\,5)^\T$ in problem \eqref{expro1}. 
	As such, formulation \eqref{milp} for the modified problem reduces to 
	\begin{equation}\small
		\label{ex:append}
		\min \left\{6v_1 + v_2 + 3 v_3 : v \geq \xi^i (1-z_i),\,\forall\,i \in [7],~\frac{1}{7}\sum_{i =1}^7 z_i \leq \frac{4}{7},~v \in \R_+^3,~z \in \{0,1\}^7 \right\},
	\end{equation}
	where
	\begin{equation*}
		\xi^{1} = \left(\begin{array}{c}  2 \\ 1 \\ 12 \\ \end{array}\right),\,
		\xi^{2} = \left(\begin{array}{c}  3 \\ 1 \\ 10 \\ \end{array}\right),\,
		\xi^{3} = \left(\begin{array}{c}  4 \\ 2 \\ 7 \\ \end{array}\right),\,
	\end{equation*}
	\begin{equation*}
		\xi^{4} = \left(\begin{array}{c}  5 \\ 2 \\ 6 \\	\end{array}\right),\,
		\xi^{5} = \left(\begin{array}{c}  6 \\ 2 \\ 6 \\ \end{array}\right),\,
		\xi^{6} = \left(\begin{array}{c}  6 \\ 2 \\  5 \\ \end{array}\right),\,
		\xi^{7} = \left(\begin{array}{c}  12 \\ 1 \\ 2 \\ \end{array}\right).
	\end{equation*}
	
	We choose variable $z_4$ to branch on. 
	The left and right branches $($after removing the fixed variable $z_4$$)$ reduce to
	\begin{equation}\label{exbranchL}\small
		\begin{aligned}
			& O^{\rm L} = \min \left\{6v_1 + v_2 + 3 v_3 : \frac{1}{7}(z_1+z_2+z_3+z_5+z_6+z_7) \leq \frac{4}{7},\,v \geq \xi^4=(5, 2, 6)^\T,\right.\\
			& \qquad\qquad\qquad \qquad\qquad \left.v \geq \xi^i(1-z_i),~\forall~i\in\{1,2,3,5,6,7\},~v\in\R_+^3,~z \in \{0,1\}^6 \vphantom{\frac{1}{7}}\right\},
		\end{aligned}
	\end{equation}
	\begin{equation}\label{exbranchR}\small
		\begin{aligned}
			& O^{\rm R} = \min \left\{6v_1 + v_2 + 3 v_3 \stt \frac{1}{7}(z_1+z_2+z_3+z_5+z_6+z_7) \leq \frac{3}{7},~v \geq \boldsymbol{0},\right.\\
			& \qquad\qquad \qquad\qquad\quad\left. ~v \geq \xi^i(1-z_i),~\forall~i\in\{1,2,3,5,6,7\},~v\in\R_+^3,~z \in \{0,1\}^6 \vphantom{\frac{1}{7}} \right\}.
		\end{aligned}
	\end{equation}		
	By \cref{remark:left}, the new left branch $O^{\rm L_2}$ reduces to
	\begin{equation}\label{exbranchL2}\small
		\begin{aligned}
			& \min \left\{6v_1 + v_2 + 3 v_3 \stt \frac{3}{7} < \frac{1}{7}(z_1+z_2+z_3+z_5+z_6+z_7) \leq \frac{4}{7},\,v \geq \xi^4=(5, 2, 6)^\T, \right.\\
			& \qquad\qquad\qquad \qquad\qquad \left. v \geq \xi^i(1-z_i),~\forall~i\in\{1,2,3,5,6,7\},~v \in \R_+^3,~z \in \{0,1\}^6 \vphantom{\frac{1}{7}}\right\}.
		\end{aligned}
	\end{equation}
	Although the feasible regions of branches \eqref{exbranchR} and \eqref{exbranchL2} do not contain an overlap, their projection on the $v$ space, however, do contain overlaps.
	Indeed, point $v^*=(6,\,2,\,7)^\T$ can define feasible solutions 
	$$
	(v^*,\hat{z}) = (6,\,2,\,7,\,1,\,1,\,0,\,0,\,0,\,1)^\T \text{and}~ (v^*,\bar{z}) = (6,\,2,\,7,\,1,\,1,\,0,\,0,\,1,\,1)^\T
	$$
	of the two branches \eqref{exbranchR} and \eqref{exbranchL2}, respectively.
	
	\section{Test problems}\label{sect:appendix}
	
	Here we introduce the \rpp, \mpp, and \lsp problems considered in the computational experiments and the details of the procedures to generate the instances.
	
	\subsection{Chance-constrained resource planning problem \citep{Luedtke2014,Gurvich2010}}
	
	Consider a set of resources $\I$ with unit cost $c_i$ for each $i \in \I$ and a set of customers  $\J$ with random demand $\tilde{\lambda}_j$ for each $j \in \J$. 
	The \rpp problem attempts to choose the quantities of the resources and allocate these resources to customers such that the total cost of resources is minimized,
	while requiring that the allocation does not exceed the available resources and meets the customer demand with a probability at least $1-\epsilon$.
	
	Let $x_i$ denote the quantity of resource $i \in \I$ and $y_{ij}$ denote the amount of resource $i$ allocated to customer $j \in \J$. 
	The mathematical formulation of this problem can be stated as:
	\begin{equation*}
		\!\!\min_{(x,y) \in \R^{|\I|+|\I||\J|}_+} \! \left\{ \sum_{i \in \I} c_i x_i : \sum_{j \in \J} y_{ij} \leq \rho_i x_i, \forall\,i \in \I ,
		\,{\mathbb{P}}\left\{\sum_{i \in \I} \mu_{ij}y_{ij}\geq \tilde{\lambda}_j, \forall\,j \in \J \right\} \!\geq 1-\epsilon \right\},
	\end{equation*}
	 where $\rho_i \in (0,1]$ is the yield of resource $i$ and $\mu_{ij} \geq 0$ is the service rate of resource $i$ to customer $j$.
	We use the data available at \url{https://jrluedtke.github.io}; see \cite{Luedtke2014} for the detailed generation procedure. 
	The numbers of resources and customers, represented as $(|\I|, |\J|)$, are chosen from $\{(20, 30), (40, 50), (50, 100)\}$.
	The scenario size $n$ for the random variables $\{\tilde{\lambda}_j\}$ and the confidence parameter $\epsilon$ are taken from $\{1000, 2000, 3000\}$ and $\{0.10, 0.15, 0.20\}$, respectively.
	The probabilities $p_i$, $i \in [n]$, are all set to $1/n$. 
	For each $(|\I|, |\J|)$, $n$, and $\epsilon$, there are $5$ instances, leading to a testbed of $135$ \rpp instances.

	\subsection{Chance-constrained multiperiod power planning problem \cite{Dey2023}}
	The \mpp problem attempts to expand the electric power capacity of a state by building new coal and nuclear power plants to meet the demand of the state over a planning horizon of $T$ periods.
	Coal and nuclear plants are operational for $T_c$ and $T_n$ time periods after their construction, respectively.
	The objective of the problem is to minimize the total capital cost associated with the construction of the power plants while requiring that 
	the fraction of nuclear capacity to the total capacity does not exceed a predetermined threshold $f$ (as required by legal restrictions mandate), and the demands are met with a probability at least $1-\epsilon$.
	
	Let $x_t$ and $y_t$ be the amount of coal and nuclear capacity (in megawatt) brought on line at the beginning of period $t$, respectively.
	Then, the mathematical formulation of this problem is given by 
	\begin{equation*}
		\begin{aligned}
			& \min_{(x, y)\in \R_+^{2T}} \left \{ \sum_{t \in [T]} (c_t x_t +n_t y_t) \stt 
			\sum^t_{i = \tau_n(t)} y_i \leq f \cdot \left(e_t + \sum^t_{i = \tau_c(t)} x_i + \sum^t_{i = \tau_n(t)} y_i \right), \,\forall~t \in [T], \right.\\
			& \left.\qquad\qquad\qquad\qquad\qquad\qquad~~ {\mathbb{P}} \left\{e_t + \sum^t_{i = \tau_c(t)} x_i + \sum^t_{i = \tau_n(t)} y_i  \geq \tilde{d}_t,~\forall~t \in [T] \right\} \geq 1-\epsilon \right\},
		\end{aligned}
	\end{equation*}
	where $c_t$ and $n_t$ are the capital costs per megawatt for coal  and nuclear power plants, respectively, 
	$e_t$ is the electric capacity from existing resources in period $t$,
	$\tau_c(t) = \max \{1,\,t-T_c+1\}$, and $\tau_n(t) = \max \{1,\,t-T_n+1 \}$, $t \in [T]$.
	We use a similar procedure of \cite{Dey2023} to construct \mpp instances.
	Specifically, 
	\begin{itemize}
		\item [$\bullet$] the electricity demands $\{\tilde{d}_t\}$ are independent random integers, and their scenarios are uniformly chosen from $\{300, 301, \ldots,700\}$;
		\item [$\bullet$] the costs $c_t$ and $n_t$ are uniformly chosen from $\{100, 101, \ldots,300\}$ and $\{100, 101, \ldots,200\}$, respectively;
		\item [$\bullet$] the initial capacity resource $e_1$ is an integer uniformly chosen from $\{100, 101, \ldots,500\}$ and the capacity resources in the subsequent periods are calculated as $e_t= e_1 \cdot r^{t-1}$, where $t=2, 3, \ldots, T$, and $r$ is uniformly chosen from  $[0.7,1)$;
		\item [$\bullet$] the lifespans of coal and nuclear power plants, $T_c$ and $T_n$, are set to $15$ and $10$, respectively;
		\item [$\bullet$] the number of periods $T$ is taken from $\{10,20,30\}$;
		\item [$\bullet$] the scenario size $n$ for the random variables $\{\tilde{d}_t\}$ and the confidence parameter $\epsilon$ are taken from $\{1000,2000,3000\}$ and $\{0.05, 0.10, 0.20\}$, respectively;
		\item [$\bullet$] the probabilities $p_i$, $i \in [n]$, are all set to $1/n$. 
	\end{itemize}
	For each $T$, $n$, and $\epsilon$, we randomly generate $5$ instances, and thus in total, there are $135$ \mpp instances.
	\rev{Note that in the experiments conducted in \cref{subsect:effectm}, to compare the performance of the proposed approaches on instances with different dimensions of the random vector $\xi$, the number of periods $T$ is taken from $\{10, 20, 30, 50, 100, 150\}$, while the other parameters are still identical (and thus, there are in total $270$ \mpp instances considered in \cref{subsect:effectm}).}
	
	\subsection{Chance-constrained lot-sizing problem \citep{Beraldi2002}}\label{cclsp}
	The \lsp problem attempts to determine a production schedule for $T$ periods that minimizes the sum of the fixed setup cost, the production cost, and the expected inventory cost while  
	satisfying the capacity constraint in each period and meeting the customer demand with a probability at least $1-\epsilon$.
	
	Let $x_t$ be the binary variable denoting whether a setup of production is performed in period $t$, and $y_t$ be the continuous variable characterizing the corresponding quantity to be produced.
	Mathematically, the \lsp problem can be written by 
	\begin{equation*}
		\begin{aligned}
			& \min _{(x, y) \in \{0,1\}^T \times \R_{+}^T } \left\{\sum_{t \in[T]}\left(f_t x_t+ c_t y_t + h_t \mathbb{E}\left(\left(\sum_{j \in [t]} y_j-\sum_{j \in[t]} \tilde{d}_j\right)^+ \right)\right):\right. \\
			&\left. \qquad\qquad\qquad\qquad y_t \leq C_t x_t,\,\forall\,t \in[T], \, \mathbb{P}\left\{\sum_{j \in[t]} y_j \geq \sum_{j \in[t]} \tilde{d}_j, \,\forall\, t \in[T]\right\} \geq 1-\epsilon\right\},
		\end{aligned}
	\end{equation*}
	where for each $t\in [T]$, $f_t$ is the fixed setup cost per production run, $c_t$ is the unit production cost, 
	$h_t$ is the unit holding cost, $C_t$ is the production capacity, and $\tilde{d}_t$ is the random demand.
	Notice that the \lsp problem can also be transformed into an \MILP problem of the form \eqref{milp}; see \cite{Zhao2017}.
	We use a similar procedure as in \cite{Abdi2016}  to construct the \lsp instances.
	Specifically,
	\begin{itemize}
		\item [$\bullet$] the demands $\{\tilde{d}_{j}\}$ are independent random variables, and their scenarios are uniformly chosen from $(1, 100)$;
		\item [$\bullet$] the setup cost $f_t$, the unit production cost $c_t$, and the unit holding cost $h_t$ are uniformly chosen from $(1, 1000)$, $(1, 10)$, and $(1, 5)$, respectively;
		\item [$\bullet$] the number of periods $T$ is taken from $\{5, 10, 15, 20\}$;
		\item [$\bullet$] the scenario size $n$ for the random variables $\{\tilde{d}_j\}$ and the confidence parameter $\epsilon$ are taken from $\{1000, 2000, 3000\}$ and $\{0.05, 0.10, 0.20\}$, respectively;
		\item [$\bullet$]  the probabilities $p_i$, $i \in [n]$, are all set to $1/n$. 
	\end{itemize}
	For each $T$, $n$, and $\epsilon$, we randomly generate $5$ instances, and thus in total, there are $180$ \lsp instances.

\end{appendices}

\bibliography{shorttitles,CCP-arXiv}{}
\bibliographystyle{apalike}

\end{document}